\newtheorem{proposition}{Proposition}[section]
\newtheorem{theorem}[proposition]{Theorem}
\newtheorem{problem}[proposition]{Problem}
\newtheorem{lemma}[proposition]{Lemma}
\newtheorem{corollary}[proposition]{Corollary}
\newtheorem{definition}[proposition]{Definition}
\newtheorem{assumption}[proposition]{Assumption}
\theoremstyle{definition}
\theoremstyle{remark}
\newtheorem{remark}[proposition]{Remark}
\numberwithin{equation}{section}
\begin{document}
	
	\title[Horospherical $p$-Christoffel-Minkowski problem]{The horospherical $p$-Christoffel-Minkowski problem in hyperbolic space}
	\author[T. Luo]{Tianci Luo}
	\author[Y. Wei]{Yong Wei}
	\address{School of Mathematical Sciences, University of Science and Technology of China, Hefei 230026, P.R. China}
	\email{\href{mailto:Luo_tianci@mail.ustc.edu.cn}{Luo\_tianci@mail.ustc.edu.cn}}
	\email{\href{mailto:yongwei@ustc.edu.cn}{yongwei@ustc.edu.cn}}
	%\date{\today}
	\subjclass[2020]{53C42, 53C21}
	\keywords{horospherical $p$-Christoffel-Minkowski problem, hyperbolic space, $h$-convex, full rank theorem}
	\thanks{The work was supported by National Key Research and Development Program of China 2021YFA1001800 and 2020YFA0713100, and the Fundamental Research Funds for the Central Universities.}
	
	\begin{abstract}
		The horospherical $p$-Christoffel-Minkowski problem was posed by Li and Xu in \cite{Li-Xu} as a problem prescribing the $k$-th horospherical $p$-surface area measure of $h$-convex domains in hyperbolic space $\mathbb{H}^{n+1}$. It is a natural generalization of the classical $L^p$ Christoffel-Minkowski problem in the Euclidean space $\mathbb{R}^{n+1}$. In this paper, we consider a fully nonlinear equation associated with the horospherical $p$-Christoffel-Minkowski problem. We establish the existence of a uniformly $h$-convex solution under appropriate assumptions on the prescribed function. The key to the proof is the full rank theorem, which we will demonstrate using a viscosity approach based on the idea of Bryan-Ivaki-Scheuer \cite{BIS23}.
		
		When $p=0$, the horospherical $p$-Christoffel-Minkowski problem in $\mathbb{H}^{n+1}$ is equivalent to a Nirenberg-type problem on $\mathbb{S}^n$ in conformal geometry.  Therefore, our result implies the existence of solutions to the Nirenberg-type problem.
	\end{abstract}
	
	\maketitle

	%\tableofcontents
	
	%--------------------------------------------------------------k-----------
	\section{Introduction}
 The $L^p$ Christoffel-Minkowski problem in $\mathbb{R}^{n+1}$ is a problem of prescribing the $k$-th $p$-surface area measure (introduced by Lutwak \cite{Lut93}) of a convex body in $\mathbb{R}^{n+1}$ for $1\leq k\leq n-1$ and $p\geq 1$. In smooth category, the problem corresponds to finding a convex solution $\varphi$ to the fully nonlinear PDE
 \begin{equation}\label{s1.LpR}
			\sigma_k(D^2\varphi+\varphi g_{\mathbb{S}^n})=\varphi^{p-1}f\quad \text{on} \quad \mathbb{S}^n,
\end{equation}
where $f\in C^\infty(\mathbb{S}^n)$ is a prescribed function and the convexity means
\begin{equation*}
    \left(D^2\varphi+\varphi g_{\mathbb{S}^n}\right)>0 \quad \text{on} \quad \mathbb{S}^n.
\end{equation*}
When $p=1$, the problem corresponds to the classical Christoffel-Minkowski problem and the existence result of a convex solution was obtained by Guan-Ma \cite{GM03} under some assumption on the prescribing function $f$. See also \cite[Part 1]{Guan} for more introduction on the Christoffel-Minkowski problem.  Equation \eqref{s1.LpR} was further studied by Hu-Ma-Shen \cite{HMS04} in the case $p\geq k+1$ and by Guan-Xia \cite{GX18} under the evenness assumption in the case $1<p<k+1$. A key tool in the proof is the full rank theorem proved in \cites{GM03,HMS04} under the assumption is that $f^{-\frac{1}{k+p-1}}$ is spherical convex, i.e.
\begin{equation}\label{s1.crt}
			\left(D^2f^{-\frac{1}{k+p-1}}+f^{-\frac{1}{k+p-1}}g_{\mathbb{S}^n}\right)\geq 0.
\end{equation}
The curvature flow approaches to the Christoffel-Minkowski problem were also provided recently by Bryan-Ivaki-Scheuer \cite{BIS23b} and Ivaki \cite{Iva19}, Zhang \cite{Zha24}.
	
It is a natural question to ask for the extension of the $L^p$ Christoffel-Minkowski problem to the hyperbolic space  $\mathbb{H}^{n+1}$.  Building on the work of Andrews-Chen-Wei \cite{And20} (and an earlier related work  Espinar-G\'alvez-Mira \cite{Esp09}), Li-Xu \cite{Li-Xu} introduced a summation of two sets in hyperbolic space which they called the hyperbolic $p$-sum and established $p$-Brunn-Minkowski  theory in $\mathbb{H}^{n+1}$. Utilizing the hyperbolic $p$-sum and the modified quermassintegrals introduced in \cite{And20}, Li-Xu further introduced the $k$-th horospherical $p$-surface area measures for smooth uniformly $h$-convex bounded domains in $\mathbb{H}^{n+1}$, and proposed the horospherical $p$-Christoffel-Minkowski problem.
	
	We first recall some basic notations briefly, see \cite[\S 5]{And20}, \cite[\S 2-4]{Li-Xu} and Section \ref{sec2} for details. A bounded domain $K$ in the hyperbolic space $\mathbb{H}^{n+1}$ is said to be $h$-convex if it is convex by the horospheres. When $K$ is smooth, $h$-convexity is equivalent to that all the principal curvatures of $\partial K$ satisfy $\kappa_i\geq 1$ for all $i=1,\cdots,n$. We also say that a smooth bounded domain $K\subset \mathbb{H}^{n+1}$ is uniformly $h$-convex if the principal curvatures of $\partial K $ satisfy $\kappa_i>1$ for all $i=1,\cdots,n$. We identify the hyperbolic space with the future time-like hyperboloid in the Minkowski space $\mathbb{R}^{n+1,1}$.  For a smooth uniformly $h$-convex domain $K$ in $\mathbb{H}^{n+1}\subset \mathbb{R}^{n+1,1}$, let $X$ and $\nu$ be the position vector and unit outward normal of $\partial K$. Since $\langle X,X\rangle =-1$ and $\langle X,\nu\rangle=0$ on $\partial K$, there exists $u\in\mathbb{R}$ and $x\in\mathbb{S}^n$ such that
 \begin{equation*}
     X-\nu=e^{-u}\left(x,1\right).
 \end{equation*}
 The horospherical support Gauss map $G:\partial K\to\mathbb{S}^n$ is defined by $G(X)=x$ and here $u\in C^\infty(\mathbb{S}^n)$ is called the horospherical support function of $K$. For a uniformly $h$-convex domain $K$, the map $G$ is a diffeomorphism and $\partial K$ can be reparametrized by $\mathbb{S}^n$ via the inverse map of $G$. In fact, the position vector of $\partial K$ can be expressed as $X:\mathbb{S}^n\to \partial K$ satisfying
 \begin{equation}\label{s1.X}
		X(x)=\dfrac{1}{2}\varphi(-x,1)+\dfrac{1}{2}\left(\dfrac{|D\varphi|^2}{\varphi}+\dfrac{1}{\varphi}\right)(x,1)-(D\varphi,0),
	\end{equation}
where $\varphi=e^u\in C^\infty(\mathbb{S}^n)$ is also called the horospherical support function of $K$. We define a symmetric $2$-tensor $A[\varphi]$ on $\mathbb{S}^n$ as
	\begin{equation}\label{s1.Aij}
		A_{ij}[\varphi]=\varphi_{ij}-\dfrac{|D\varphi|^2}{2\varphi}\sigma_{ij}+\dfrac{1}{2}(\varphi-\dfrac{1}{\varphi})\sigma_{ij},
	\end{equation}
	where $\sigma_{ij}$	denotes the components of the round metric $g_{\mathbb{S}^n}$.  Then the Weingarten tenor $h_i^j$ of $\partial K$ satisfies \cite[(5.7)]{And20}
 \begin{equation}\label{s1.Aij2}
     \left(h_i^j-\delta_i^j\right)\varphi(x)A_{jk}[\varphi(x)]=\sigma_{ik}.
 \end{equation}
 It follows that for a smooth function $u\in C^\infty(\mathbb{S}^n)$, the map \eqref{s1.X} defines an embedding to the boundary of a uniformly $h$-convex domain if and only if $A[\varphi]$ is positive definite.

For any $p>0$, the hyperbolic $p$-sum $K+_pL$ of two uniformly $h$-convex bounded domains $K$ and $L$ in $\mathbb{H}^{n+1}$ is defined through their horospherical support functions (see \cite[Section 4]{Li-Xu}). The $p$-mixed $k$-th modified quermassintegrals for two smooth uniformly $h$-convex bounded domains $K$, $L\subset\mathbb{H}^{n+1}$ are defined by
	\begin{equation}\nonumber
		\tilde{W}_{p,k}(K,L)=\lim_{t\to0^+}\dfrac{\tilde{W}_k(K+_pt\cdot L)-\tilde{W}_k(K)}{t},\quad1\le k\le n.
	\end{equation}
	Here $\tilde{W}_k(K)$ denotes the modified quermassintegral for $K$ which was introduced by Andrews-Chen-Wei \cite{And20} and further studied in \cites{GZ24,Hu20}. It was shown by Li-Xu \cite[Section 5]{Li-Xu} that $\tilde{W}_{p,k}(K,L)$ has the following integral representation:
	\begin{equation}\nonumber
		\tilde{W}_{p,k}(K,L)=\frac{1}{p}\int_{\mathbb{S}^n}\varphi^p_LdS_{p,k}(K,x),
	\end{equation}
	where $\varphi_L=e^{u_L}$ is the horospherical support function of $L$ and $dS_{p,k}(K,x)$ is the $k$-th horospherical $p$-surface area measure of $K$  defined by
	\begin{equation}\nonumber
		dS_{p,k}(K,x)=\frac{1}{C_n^{n-k}}\varphi_K^{-p-k}\sigma_{n-k}\Big(A[\varphi_K]\Big)d\sigma_{\mathbb{S}^n}.
	\end{equation}
 Particularly, we call $dS(K,x):=dS_{0,0}(K,x)$ the horospherical surface area measure of $K$.

	The horospherical $p$-Christoffel-Minkowski problem in $\mathbb{H}^{n+1}$ introduced by Li and Xu \cite{Li-Xu} is a problem prescribing the $k$-th horospherical $p$-surface area measure of $h$-convex bounded domains and can be stated as following.
	\begin{problem}[see Problem 5.2 in \cite{Li-Xu}]\label{problem}
		Let $n\ge2$ and $1\le k\le n-1$ be integers, and let $p$ be a real number. For a given smooth positive function $f(x)$ defined on $\mathbb{S}^n$, what are the necessary and sufficient conditions for $f(x)$, such that there exists a smooth uniformly h-convex bounded domain $K\subset\mathbb{H}^{n+1}$ satisfying
		\begin{equation}\nonumber
			dS_{p,k}(K,x)=f(x)d\sigma_{\mathbb{S}^n}.
		\end{equation}
  That is, finding a smooth positive solution $\varphi$ to
  \begin{equation}\label{s1.prob-eq}
      \frac{1}{C_n^{n-k}}\varphi^{-p-k}\sigma_{n-k}\Big(A[\varphi]\Big)=f(x)
  \end{equation}
such that $A[\varphi]>0$   on $\mathbb{S}^n$.

	\end{problem}
	The horospherical $p$-Christoffel-Minkowski problem has been studied in \cite{Li-Xu} using a curvature flow approach, and Li-Wan \cite{Li23} has studied the problem in the hyperbolic plane (i.e. $n=1$). The case $k=1, p=-n$ of \eqref{s1.prob-eq} was also studied by Chen \cite{Chen}
	
For convenience of the notations, let
 \begin{equation*}
     k'=n-k,\quad  p'=p+n \quad  \text{and}\quad  f'=C_n^{n-k}f.
 \end{equation*}
 The equation \eqref{s1.prob-eq} is equivalent to
 \begin{equation}\label{s1.prob-eq2}
     \sigma_{k'}\left(A[\varphi]\right)=\varphi^{p'-k'}f'.
 \end{equation}
 If there is no confusion, we rewrite \eqref{s1.prob-eq2} equivalently as
	\begin{equation}\label{equa}
			\sigma_{k}(A[\varphi])=~\varphi^{p-k}f\qquad \text{on}\quad \mathbb{S}^n,
	\end{equation}
  where $k=1,\cdots,n-1$. 	We say that $\varphi$ is a $h$-convex solution of  \eqref{equa} if $A[\varphi]\ge0$ on $\mathbb{S}^n$, and $\varphi$ is a uniformly $h$-convex solution of \eqref{equa} if $A[\varphi]>0$ on $\mathbb{S}^n$.
	
	To solve the equation \eqref{equa}, we make the following assumptions on $f$.
	\begin{assumption}\label{assum}
		Let $n\ge2$ and $1\le k\le n-1$ be integers. Let $p\geq 0$ be a real number and $f$ be a smooth positive function on $\mathbb{S}^n$.
		
		(1) If $p=0$, we assume that
		\begin{equation*}
			D^2(f^{ -\frac{1}{k}})-|Df^{-\frac{1}{k}}|g_{\mathbb{S}^n}+\dfrac{f^{-\frac{1}{k}}}{2+8\max\limits_{\mathbb{S}^n}\left(\dfrac{f}{C^k_n}\right)^{\frac{1}{k}}}g_{\mathbb{S}^n}\ge0.
		\end{equation*}
		
		(2) If $0<p\le\frac{k}{2}$, we assume that
		\begin{equation*}
			D^2(f^{-\frac{1}{k}})-\frac{k+2p}{k}|Df^{-\frac{1}{k}}|g_{\mathbb{S}^n}+\dfrac{p^2}{k^2}f^{-\frac{1}{k}}g_{\mathbb{S}^n}\ge0.
		\end{equation*}
		
		(3) If $\frac{k}{2}<p<k$, we assume that
		\begin{equation*}
			D^2(f^{-\frac{1}{k}})-\dfrac{(3k-2p)^2}{p(2p-k)}\dfrac{|Df^{-\frac{1}{k}}|^2}{2f^{-\frac{1}{k}}}g_{\mathbb{S}^n}+\dfrac{1}{2}\dfrac{p}{k}f^{-\frac{1}{k}}g_{\mathbb{S}^n}\ge0.
		\end{equation*}
		
		(4) If $k\le p<2k$, we assume that
		\begin{equation*}
			D^2(f^{-\frac{1}{p}})-\dfrac{|Df^{-\frac{1}{p}}|^2}{2f^{-\frac{1}{p}}}g_{\mathbb{S}^n}+\frac{1}{2}f^{-\frac{1}{p}}g_{\mathbb{S}^n}\ge0.
		\end{equation*}

        (5) If $p=2k$, we assume that
		\begin{equation*}
            \max\limits_{\mathbb{S}^n}f<\dfrac{C_n^k}{2^k}\quad \text{and}\quad
			D^2(f^{-\frac{1}{p}})-\dfrac{|Df^{-\frac{1}{p}}|^2}{2f^{-\frac{1}{p}}}g_{\mathbb{S}^n}+\frac{1}{2}f^{-\frac{1}{p}}g_{\mathbb{S}^n}\ge0.
		\end{equation*}

        (6) If $p>2k$, we assume that $2\le k\le n-1$,
		\begin{equation*}
            \min\limits_{\mathbb{S}^n}f\le\gamma_p:=\dfrac{k^k(p-2k)^{\frac{p-2k}{2}}}{p^{\frac{p}{2}}}C^k_n\quad \text{and}\quad
			D^2(f^{-\frac{1}{p}})-\dfrac{|Df^{-\frac{1}{p}}|^2}{2f^{-\frac{1}{p}}}g_{\mathbb{S}^n}+\frac{k}{p}f^{-\frac{1}{p}}g_{\mathbb{S}^n}\ge0.
		\end{equation*}
	\end{assumption}
	\begin{remark}
		For a given smooth and positive function $h(x)$ defined on $\mathbb{S}^n$, it is straightforward to check that
		$f=(h^{-\frac{1}{k}}+C)^{-k}$ satisfies the conditions in Assumption \ref{assum} for $C$ large enough. Moreover, if $f(x)$ is $C^2$ closed to a constant function, then $f(x)$ also satisfies the Condition (1),(2) and (3). Conditions (4) (5) in Assumption \ref{assum} is a more natural requirement that fits the assumption \ref{s1.crt} in the classical $L^p$ Christoffel-Minkowski problem.
	\end{remark}
	
	We say that a function $f$ on $\mathbb{S}^n$ is even if it satisfies $f(x)=f(-x)$ for any $x\in\mathbb{S}^n$. Now we state our main result.
	\begin{theorem}\label{Main}
		Let $n\ge2$ and $1\le k\le n-1$ be integers. Let $p\geq 0$ be a real number. Given a smooth positive even function $f(x)$ on $\mathbb{S}^n$. If $f(x)$ satisfies the Assumption \ref{assum}, then equation \eqref{equa} has a smooth, even and uniformly h-convex solution $\varphi>1$.
	\end{theorem}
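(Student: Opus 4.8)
The plan is to solve \eqref{equa} by a continuity (degree‑theoretic) argument carried out entirely in the class of smooth, even, uniformly $h$‑convex functions on $\mathbb{S}^n$, the decisive ingredient being a full rank theorem which propagates uniform $h$‑convexity along the deformation. Put $g=f^{-1/q}$ with $q=k$ in cases (1)--(3) of Assumption \ref{assum} and $q=p$ in cases (4)--(6), fix a large positive constant $\bar g$, and set $f_t:=\big((1-t)\bar g+tg\big)^{-q}$ for $t\in[0,1]$. Since each inequality in Assumption \ref{assum} is, up to the harmless quadratic gradient term, a linear inequality in $D^2g$, $|Dg|$ and $g$, it passes to convex combinations of $g$ with a positive constant, and the size restrictions $\max_{\mathbb S^n}f<C_n^k/2^k$ in (5) and $\min_{\mathbb S^n}f\le\gamma_p$ in (6) are secured by taking $\bar g$ large; thus every $f_t$ again satisfies the corresponding case of Assumption \ref{assum}. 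For $\bar g$ large the constant function $\varphi\equiv c_0>1$ with $C_n^k\big(\tfrac12(c_0-1/c_0)\big)^k=c_0^{\,p-k}f_0$ solves \eqref{equa} at $t=0$ and is round, hence strictly uniformly $h$‑convex. We work with even functions throughout: \eqref{equa} and the path $f_t$ are invariant under $x\mapsto-x$, and restricting to even functions removes the degree‑one spherical harmonics (the translational kernel) from the linearization, which is what makes the base‑point linearization invertible and, as in the Euclidean $L^p$ Christoffel--Minkowski problem, prevents the horospherical support function from degenerating.

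\emph{A priori estimates.} Let $\varphi=\varphi_t$ be an even uniformly $h$‑convex solution of \eqref{equa} with $f=f_t$. Evaluating \eqref{equa} at a maximum and at a minimum of $\varphi$ (where $D\varphi=0$ and $D^2\varphi\le0$, resp.\ $D^2\varphi\ge0$), using \eqref{s1.Aij} and the Maclaurin inequality, gives the upper bound $\varphi\le C$ and — more delicately — the uniform lower bound $\varphi\ge1+\delta>1$; in the borderline regimes $p\ge2k$ this lower bound is exactly where the size conditions in cases (5), (6) are used, while for $0\le p<2k$ it comes from the quantitative form of Assumption \ref{assum}. The gradient estimate $|D\varphi|\le C$ follows from the maximum principle applied to a combination of $|D\varphi|^2$ and $\varphi$. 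For the second order estimate one differentiates \eqref{equa} twice and applies the maximum principle, in the spirit of Guan--Ma, to the largest eigenvalue of $A[\varphi]$, handling the extra zeroth and first order terms of \eqref{s1.Aij}, to get $A[\varphi]\le C\,g_{\mathbb S^n}$; together with the lower bound $A[\varphi]\ge c\,g_{\mathbb S^n}$ from the full rank theorem below this yields uniform ellipticity, and Evans--Krylov and Schauder theory then give uniform $C^{m,\alpha}$ bounds for every $m$.

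\emph{The full rank theorem.} The heart of the matter is: every even $h$‑convex solution of \eqref{equa} with $f$ satisfying Assumption \ref{assum} has $A[\varphi]>0$ on $\mathbb S^n$. Following Bryan--Ivaki--Scheuer \cite{BIS23}, one works at the level of viscosity solutions, since a priori $A[\varphi]$ may drop rank and the operator is then only degenerate elliptic. Let $\ell$ be the minimal rank of $A[\varphi]$, and let $\phi$ be the standard rank‑detecting quantity built from $\sigma_{\ell+1}(A[\varphi])$ and its lower order corrections. Using the concavity of $\sigma_k^{1/k}$ on the positive cone, the identities \eqref{s1.Aij}--\eqref{s1.Aij2}, and — decisively — the convexity inequality for $f$ in the relevant case of Assumption \ref{assum} (the hyperbolic counterpart of \eqref{s1.crt}, which forces the inhomogeneous term $\varphi^{p-k}f$ to contribute with the correct sign), one shows that $\phi$ satisfies a differential inequality of the form $\mathcal L\phi\le C(\phi+|D\phi|)$ in the viscosity sense, where $\mathcal L$ is the (degenerate) linearized operator. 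The strong maximum principle then forces $\phi\equiv0$, i.e.\ $A[\varphi]$ has constant rank $\ell$; since the round solution at $t=0$ has $\ell=n$ and the $C^0$--$C^2$ estimates prevent the rank from dropping in the limit, $\ell=n$ is maintained along the path, so every solution encountered is uniformly $h$‑convex.

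\emph{Conclusion.} Combining the pieces: the a priori estimates confine all even uniformly $h$‑convex solutions of \eqref{equa} with $f=f_t$ to a fixed open set $\mathcal U$ of even functions (bounded in $C^{m,\alpha}$, with $1+\delta\le\varphi\le C$ and $A[\varphi]$ between two positive multiples of $g_{\mathbb S^n}$), on whose boundary there are no solutions; hence the Leray--Schauder degree of $\varphi\mapsto\sigma_k(A[\varphi])-\varphi^{p-k}f_t$ on $\mathcal U$ is well defined and independent of $t\in[0,1]$. At $t=0$ this degree equals $\pm1$ because the linearization at $\varphi\equiv c_0$ is $a\Delta+b$ with $a>0$ and, for suitable $\bar g$, $b/a\neq\ell(\ell+n-1)$ for all even $\ell$, so it is invertible on even Hölder spaces. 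Therefore the degree is nonzero for $t=1$ and \eqref{equa} has an even solution $\varphi=\varphi_1\in\mathcal U$; it is smooth by bootstrapping, uniformly $h$‑convex by the full rank theorem, and $>1$ by the $C^0$ estimate, which is Theorem \ref{Main}. The principal obstacle is the full rank theorem in the viscosity framework: extracting the inequality $\mathcal L\phi\le C(\phi+|D\phi|)$ demands a careful algebraic analysis of $\sigma_k$ and its first and second derivatives against the zeroth and first order terms of $A[\varphi]$, and it is precisely this computation that dictates the six cases and the exact constants of Assumption \ref{assum}; a secondary technical point is the uniform lower bound $\varphi>1$ in the regimes $p\ge2k$.
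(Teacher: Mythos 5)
Your overall architecture (a homotopy $f_t$ built from convex combinations of $f^{-1/k}$ or $f^{-1/p}$ with a constant, a priori estimates, a constant rank/full rank theorem, and Li's degree theory with the base degree computed from the linearization $a(\Delta+b)$ at a constant solution) is the same as the paper's. But there are two genuine gaps. First, your degree computation at $t=0$ is incomplete and, in the regime $p>2k$, your choice of base point breaks it. To conclude $\deg=\pm1$ at $t=0$ you need to know that the constant $c_0$ is the \emph{only} even $h$-convex solution of $\sigma_k(A[\varphi])=\varphi^{p-k}f_0$; invertibility of the linearization at $c_0$ alone does not determine the total degree. The paper secures this by a uniqueness result (Lemma 5.1, quoting Li--Xu, Theorem 8.1), which holds for every constant $\gamma>0$ when $0\le p<2k$, for $\gamma<2^{-k}C_n^k$ when $p=2k$, but for $p>2k$ \emph{only at the critical value} $\gamma=\gamma_p$. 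If you take ``$\bar g$ large'', then $f_0=\bar g^{-p}<\gamma_p$ and the constant equation $c^2-1=2(f_0/C_n^k)^{1/k}c^{p/k}$ has two roots $c_1<c_2$; a direct computation of the radial derivative shows the zeroth-order coefficient $b$ is a positive multiple of $(2k-p)c^2+p$, so $b>0$ at $c_1$ and $b<0$ at $c_2$, the two local indices are $-1$ and $+1$, and the total degree at $t=0$ is expected to vanish (and in any case is not determined by your argument). This is precisely why the paper pins $f_0=\gamma_p$ in case (6) rather than an arbitrary small constant; your ``take $\bar g$ large'' shortcut does not work there, and even for $0\le p\le 2k$ you must cite or prove the uniqueness of the constant solution before reading off the degree from $a(\Delta+b)$.

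Second, the full rank theorem --- the actual core of the result, and the only place where the six cases and exact constants of Assumption \ref{assum} enter --- is asserted rather than proved, and your description conflates two different methods: you speak of the Guan--Ma rank-detecting quantity built from $\sigma_{\ell+1}(A[\varphi])$ while invoking the Bryan--Ivaki--Scheuer viscosity framework; the paper deliberately avoids $\sigma_{\ell+1}$ and instead applies the Brendle--Choi--Daskalopoulos viscosity lemma to the \emph{smallest eigenvalue} of $A[\varphi]$, deriving $\sigma_k^{ij}\psi_{ij}\le C(\psi+|D\psi|)$ at a lower support $\psi$ and then using the strong maximum principle together with the fact that $A[\varphi]>0$ at the minimum point of $\varphi$ (no rank-continuity along the path is needed, and indeed ruling out boundary solutions in the degree argument requires the full rank statement for an \emph{arbitrary} $h$-convex solution, not just limits along the homotopy). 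The computation behind ``$\mathcal L\phi\le C(\phi+|D\phi|)$'' is nontrivial here because $A[\varphi]$ is not Codazzi: commuting derivatives produces the extra first- and zeroth-order terms (Lemma 4.3/Claim 1--2 of the paper) whose signs are exactly what the case-by-case inequalities in Assumption \ref{assum} are designed to control; without carrying this out, the proof of Theorem \ref{Main} is not complete. A smaller inaccuracy: the gradient bound does not come from a maximum principle applied to $|D\varphi|^2$, but from the evenness-based inequalities $\tfrac12(\max\varphi+1/\max\varphi)\le\min\varphi$ and $|D\varphi|^2/\varphi^2\le 1-1/\varphi^2$ of Li--Xu, which is also where the evenness hypothesis is really used.
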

	
 Theorem \ref{Main} establishes a sufficient condition for the corresponding case of Problem \ref{problem} through the relationship between equations \eqref{equa} and \eqref{s1.prob-eq}.
	
 \begin{remark}
		The case $k=1,p=0$ of Theorem \ref{Main} was recently established by Chen \cite{Chen}. For the case $p>0$, Li-Xu \cite[Theorem 7.3]{Li-Xu} also demonstrated a similar existence result under the same assumption using the curvature flow approach (The condition in (2) of Assumption \ref{assum} is slightly better than that in \cite[Assumption 7.1]{Li-Xu} for the corresponding case). However, they could only prove the existence of an even, uniformly $h$-convex solution $\varphi$ satisfying
  \begin{equation}\label{s1.lixu-equ}
      \sigma_{k}(A[\varphi])=\gamma\varphi^{p-k}f
  \end{equation}
  on $\mathbb{S}^n$ for some unknown constant $\gamma>0$. For $p=0$, Li-Xu \cite[Theorem 7.3]{Li-Xu} assumed further that the prescribed function $f$ is a constant. Hence our result provides more precise information.		
	\end{remark}
	
	To find a uniformly $h$-convex solution, a key tool to handle the convexity is the full rank theorem for the equation \eqref{equa}. We will use a viscosity approach based on the idea of Bryan-Ivaki-Scheuer \cite{BIS23} where they provided a simplified proof of the full rank theorem for the classical Christoffel-Minkowski problem in $\mathbb{R}^{n+1}$ using a viscosity approach. The key is a viscosity differential inequality for the minimal eigenvalue of a symmetric $2$-tensor in Brendle-Choi-Daskalopoulos \cite[Lemma 5]{BCD17}, and we use it to derive that the minimal eigenvalue of $A[\varphi]$ satisfies a linear differential inequality in a viscosity sense and the latter allows us to use the strong maximum principle. This avoids the use of a nonlinear test function $\psi=\sigma_{l+1}(A[\varphi])$ as in Guan-Ma's work \cite{GM03}. In the proof of the full rank theorem for \eqref{s1.LpR} in the Euclidean space, a key assumption is that the symmetric $2$-tensor $D^2\varphi+\varphi g_{\mathbb{S}^n}$ is a Codazzi tensor. However, the tensor $A[\varphi]$ for the horospherical $p$-Christoffel-Minkowski problem in  $\mathbb{H}^{n+1}$ is not Codazzi and is more complicated. This complexity necessitates handling additional terms when commuting covariant derivatives, and leveraging basic properties of elementary symmetric functions. 
%The viscosity approach is a powerful tool, in a forthcoming work we plan to employ this tool to obtain constant rank theorem for more general curvature equations which arise naturally in the horospherical geometry of hyperbolic space.
	
	When $p=0$, the horospherical  $p$-Christoffel-Minkowski problem \eqref{equa} reduces to  the \textit{generalized Christoffel problem} considered in Espinar-G\'alvez-Mira \cite[\S 5]{Esp09}. Define the hyperbolic curvature radii of a uniformly $h$-convex domain $K\subset\mathbb{H}^{n+1}$ by
 \begin{equation*}
     \mathcal{R}_i=\frac{1}{\kappa_i-1},\qquad i=1,\cdots,n,
 \end{equation*}
where $\kappa=(\kappa_1,\cdots,\kappa_n)$ are principal curvatures of $\partial K$.  The \textit{generalized Christoffel problem} introduced in \cite{Esp09} states as
\begin{problem}[See \S 5 in \cite{Esp09}]\label{s1.prob}
Let $n\geq 2$ and $1\leq k\leq n$ be integers. Given a smooth function $f$ on $\mathbb{S}^n$. Find if there exists a uniformly $h$-convex bounded domain $K\subset \mathbb{H}^{n+1}$ such that
\begin{equation}\label{s1.prob2-eq}
    \sigma_k\left(\mathcal{R}_1,\cdots,\mathcal{R}_n\right)=f.
\end{equation}
\end{problem}
We note that the notation in \cite{Esp09} is slightly different from ours. In that paper the horospherical convex domains are those which are intersection of complements of horoballs everywhere, while we deal with domains which are intersections of horoballs.  But the idea of introducing the \textit{generalized Christoffel problem}  is essentially the same as the statement in Problem \ref{s1.prob}.

By the equation \eqref{s1.Aij2}, we know that $\mathcal{R}_i$ are just eigenvalues of the symmetric $2$-tensor $\varphi A[\varphi]$ on $\mathbb{S}^n$. The equation \eqref{s1.prob2-eq} is equivalent to
\begin{equation}
    \sigma_k\left(A[\varphi]\right)=\varphi^{-k}f,\qquad \text{on} \quad \mathbb{S}^n,
\end{equation}
which corresponds to \eqref{equa} for $p=0$. Moreover, it is shown in \cite[Corollary 27]{Esp09} that Problem \ref{s1.prob} is tightly linked to the question of prescribing a given functional of the eigenvalues of the Schouten tensor for a conformal metric on $\mathbb{S}^n$.

Recall that the  Schouten tensor of a Riemannian metric $g$ on a manifold $M^n$ is a symmetric $2$-tensor defined by
	\begin{equation}
		Sch_g:=\frac{1}{n-2}\left(Ric_g-\frac{S(g)}{2(n-1 )}g\right),		
	\end{equation}
	where $Ric_g$ and $S(g)$ stand for the Ricci and scalar curvatures of $g$. The Schouten tensor completely determines the curvature tensor for a conformally flat metric and plays a key role in the conformal geometry.  The problem of prescribing a certain functional of the eigenvalues of the Schouten tensor for a conformal metric on a Riemannian manifold has been studied extensively. See \cites{Ch1,Ch2,Gu07,Li03,GW,STW,GE,Ch3,JL,Li95,Vi} for instance and specially the book \cite[Part 2]{Guan}. The problem of finding a conformal metric on $\mathbb{S}^n$ with a prescribed $\sigma_k$-curvature $S_k=\sigma_k(Sch_g)$ has been specifically treated, as a generalization of the Nirenberg problem.

Let $M=\partial K$ be a smooth uniformly $h$-convex hypersurface in $\mathbb{H}^{n+1}$ with horospherical support function $\varphi=e^u\in C^\infty(\mathbb{S}^n)$. The \textit{horospherical metric} is a conformal metric $g=\varphi^{-2}g_{\mathbb{S}^n}$ on the sphere $\mathbb{S}^n$. It is calculated that the hyperbolic curvature radii $\mathcal{R}_i$ of $M$ and the eigenvalues $\lambda_i$ of $Sch_g$ on $\mathbb{S}^n$ satisfy (see \cite[\S 7]{And20} and \cite[\S 5]{Esp09})
\begin{equation*}
    \mathcal{R}_i=\lambda_i-\frac{1}{2},\quad i=1,\cdots,n,
\end{equation*}
and so
\begin{equation*}
\sigma_k(\mathcal{R}_1,...,\mathcal{R}_n)=\sum_{j=0}^{k}C_{n-j}^{k-j}(-1)^{k-j}2^{j-k}\sigma_j(\lambda_1,...,\lambda_n).
\end{equation*}
By this relation, the \textit{generalized Christoffel problem} is equivalent to the Nirenberg-type problem on $\mathbb{S}^n$ in which the following linear combination of the $\sigma_k$-curvatures is prescribed
\begin{equation}\label{s1.conf}
	\sum_{j=0}^{k}c_j\sigma_j(Sch_g)=f,\qquad c_j:=C_{n-j}^{k-j}(-1)^{k-j}2^{j-k}.
\end{equation}
Since the Schouten tensor only makes sense for dimension $n\geq 3$, our main result in Theorem \ref{Main} implies the following corollary.
\begin{corollary}
	Let $f$ be a smooth and positive function on $\mathbb{S}^n$ with $n\ge3$, if f satisfies Condition (1) in Assumption \ref{assum}, then there exists a conformal metric $g=\varphi^{-2}g_{\mathbb{S}^n}$ on $\mathbb{S}^n$ such that its Schouten tensor $Sch_g$ satisfies the Nirenberg-type equation \eqref{s1.conf}. Additionally, $g$ satisfies the regularity condition that $2Sch_g-g$ is positive definite.
\end{corollary}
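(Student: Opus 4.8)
The plan is to obtain this as a direct translation of Theorem \ref{Main} in the case $p=0$ through the conformal dictionary recalled above; the corollary carries no analytic content of its own beyond that theorem. First I would apply Theorem \ref{Main} with $p=0$: since $f$ is a smooth, positive, even function on $\mathbb{S}^n$ satisfying Condition (1) of Assumption \ref{assum}, it yields a smooth, even, uniformly $h$-convex solution $\varphi>1$ of
\[
\sigma_k(A[\varphi])=\varphi^{-k}f\qquad\text{on }\mathbb{S}^n .
\]
Because $A[\varphi]>0$ everywhere, the map \eqref{s1.X} embeds $\mathbb{S}^n$ as the boundary of a smooth uniformly $h$-convex bounded domain $K\subset\mathbb{H}^{n+1}$ with horospherical support function $\varphi$, and $g=\varphi^{-2}g_{\mathbb{S}^n}$ is its horospherical metric; since $n\ge3$, the Schouten tensor $Sch_g$ is defined.

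Next I would run the correspondence in reverse on this solution. By \eqref{s1.Aij2}, the hyperbolic curvature radii $\mathcal{R}_1,\dots,\mathcal{R}_n$ of $\partial K$ are exactly the eigenvalues of $\varphi A[\varphi]$, so the displayed equation is equivalent to the generalized Christoffel equation $\sigma_k(\mathcal{R}_1,\dots,\mathcal{R}_n)=f$. Substituting $\mathcal{R}_i=\lambda_i-\tfrac12$ (with $\lambda_1,\dots,\lambda_n$ the eigenvalues of $Sch_g$) and using the algebraic identity relating $\sigma_k(\mathcal{R})$ to $\sum_{j=0}^{k}c_j\,\sigma_j(Sch_g)$ recorded above, the equation becomes precisely the Nirenberg-type equation \eqref{s1.conf}. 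This establishes the first assertion. For the regularity statement, uniform $h$-convexity of $K$ means $\kappa_i>1$, i.e. $\mathcal{R}_i=1/(\kappa_i-1)>0$ for every $i$; equivalently $\lambda_i>\tfrac12$ for every $i$, i.e. $Sch_g-\tfrac12 g>0$, which is the stated condition that $2Sch_g-g$ is positive definite. (This is also exactly the condition ensuring $A[\varphi]>0$, so the two regularity conditions match.)

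I do not expect any genuine obstacle here: the corollary is a change of variables, and the whole difficulty---producing a strictly, i.e. \emph{uniformly}, $h$-convex solution rather than merely an $h$-convex one, which is what the full rank theorem delivers---has already been absorbed into the proof of Theorem \ref{Main}. The only points that need care are bookkeeping: matching the combinatorial constants so that \eqref{equa} with $p=0$ and the equation $\sigma_k(\mathcal{R})=f$ literally coincide (no spurious factor of $C_n^{n-k}$ survives, since the $\mathcal{R}_i$ are the eigenvalues of $\varphi A[\varphi]$), recording that $n\ge3$ is what makes $Sch_g$ meaningful while $1\le k\le n-1$ is inherited from Theorem \ref{Main}, and noting that the evenness of $f$ is what licenses the appeal to Theorem \ref{Main}.
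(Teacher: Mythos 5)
Your proposal is correct and is exactly the route the paper takes: the corollary is Theorem \ref{Main} with $p=0$ translated through the dictionary $\mathcal{R}_i=\lambda_i-\tfrac{1}{2}$ between the eigenvalues of $\varphi A[\varphi]$ and those of $Sch_g$, with uniform $h$-convexity ($\mathcal{R}_i>0$) giving precisely $2Sch_g-g>0$. Your observation that evenness of $f$ is what licenses the appeal to Theorem \ref{Main} is well taken, since the corollary's statement leaves that hypothesis implicit.
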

	
	The paper is organized as follows. In Section \ref{sec2}, we present some preliminaries for our proof, including the basic properties of symmetric functions, fundamental concepts and properties of $h$-convex domains and $h$-convex hypersurfaces in $\mathbb{H}^{n+1}$, and introduce the Nirenberg-type problem in $\mathbb{S}^n$. In Section \ref{sec3}, we establish a priori estimates for solutions to the equation \eqref{equa}. Section \ref{sec4} contains the proof of the full rank theorem. We employ degree theory to prove Theorem \ref{Main} in Section \ref{sec5}.

	\section{Preliminaries}\label{sec2}
	\subsection{The elementary symmetric functions}
	
	We first review some basic properties of elementary symmetric functions. See \cite[\S 2]{Guan14} for more details.
	
	For any integer $k=1,2,\cdots,n$ and $\lambda=(\lambda_{1},\cdots,\lambda_{n})\in\mathbb{R}^{n}$, the $k$-th elementary symmetric function is defined by
	\begin{eqnarray}\label{sigma}
		\sigma_{k}(\lambda)=\sum\limits_{1\le i_1<i_2<\cdots < 	i_k\le n}
		\lambda_{{i}_{1}}\lambda_{{i}_{2}}\cdots\lambda_{{i}_{k}}.
	\end{eqnarray}
	We also set $\sigma_0(\lambda)=1$ and $\sigma_m(\lambda)=0$ for $m>n$. Denote $\sigma_k(\lambda|i)$ for the symmetric function with $\lambda_i=0$ and $\sigma_k(\lambda|ij)$ the symmetric function with $\lambda_i=\lambda_j=0$.
 \begin{lemma}\label{s2.lem1}
     Let $k=1,2,\cdots,n$ and $\lambda=(\lambda_{1},\cdots,\lambda_{n})\in\mathbb{R}^{n}$. Then
     \begin{align*}
         &\sigma_k(\lambda)=\sigma_k(\lambda|i)+\lambda_i\sigma_{k-1}(\lambda|i),\quad \forall~i=1,\cdots,n\\
         &\sum_i\lambda_i\sigma_{k-1}(\lambda|i)=k\sigma_k(\lambda)\\
         &\sum_i\sigma_k(\lambda|i)=(n-k)\sigma_k(\lambda).
     \end{align*}
 \end{lemma}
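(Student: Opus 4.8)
The plan is to prove all three identities by direct combinatorial bookkeeping on the monomials appearing in the defining sum \eqref{sigma}; no induction, differentiation, or generating-function argument is needed, and in fact one could simply cite \cite[\S 2]{Guan14}. Throughout I would use the conventions already fixed in the text: $\sigma_0(\lambda)=1$, $\sigma_m(\lambda)=0$ for $m>n$, and $\sigma_k(\lambda|i)$ (resp. $\sigma_k(\lambda|ij)$) is the $k$-th elementary symmetric function of $\lambda$ with $\lambda_i$ (resp. $\lambda_i$ and $\lambda_j$) set to zero, equivalently the $k$-th elementary symmetric function in the remaining $n-1$ (resp. $n-2$) variables.

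First I would establish the splitting identity $\sigma_k(\lambda)=\sigma_k(\lambda|i)+\lambda_i\sigma_{k-1}(\lambda|i)$. Fix $i$ and partition the index sets $\{1\le i_1<\cdots<i_k\le n\}$ occurring in \eqref{sigma} according to whether $i$ belongs to $\{i_1,\dots,i_k\}$ or not. The monomials whose index set omits $i$ sum precisely to $\sigma_k(\lambda|i)$. Each monomial whose index set contains $i$ is $\lambda_i$ times a product of $k-1$ distinct factors drawn from $\{\lambda_j:j\neq i\}$; these products sum to $\sigma_{k-1}(\lambda|i)$, and factoring out $\lambda_i$ gives the second term. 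This proves the first identity (including the boundary cases, e.g. $k=n$ where $\sigma_k(\lambda|i)=0$).

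For the second identity I would sum $\lambda_i\sigma_{k-1}(\lambda|i)$ over $i$ and count the multiplicity of a fixed degree-$k$ monomial $\lambda_{j_1}\cdots\lambda_{j_k}$ with $j_1<\cdots<j_k$: it arises in the $i$-th term exactly when $i\in\{j_1,\dots,j_k\}$, i.e. for exactly $k$ values of $i$, hence $\sum_i\lambda_i\sigma_{k-1}(\lambda|i)=k\sigma_k(\lambda)$. Likewise, the same monomial appears in $\sigma_k(\lambda|i)$ exactly when $i\notin\{j_1,\dots,j_k\}$, i.e. for exactly $n-k$ values of $i$, giving $\sum_i\sigma_k(\lambda|i)=(n-k)\sigma_k(\lambda)$; as a consistency check, summing the first identity over $i=1,\dots,n$ yields $n\sigma_k(\lambda)=\sum_i\sigma_k(\lambda|i)+\sum_i\lambda_i\sigma_{k-1}(\lambda|i)$, which matches the last two formulas since $n=(n-k)+k$. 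Since the whole argument is pure counting, there is no genuine obstacle; the only point demanding slight care is keeping track of the degenerate cases and the conventions above so that all three formulas hold without exception.
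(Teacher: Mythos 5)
Your proof is correct: the splitting of the monomials in $\sigma_k(\lambda)$ according to whether the index $i$ appears, and the multiplicity count ($k$ values of $i$ for the second identity, $n-k$ for the third) are exactly the standard argument, and your consistency check via summing the first identity over $i$ is a nice touch. The paper itself gives no proof of this lemma and simply cites the standard literature on elementary symmetric functions, so your elementary counting argument supplies precisely what is implicitly being invoked.
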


 The definition of $\sigma_k$ can be extended to symmetric matrices. Let $A$ be an $n \times n$ symmetric matrix $A$ with eigenvalues $\lambda(A)=(\lambda_1(A)$, ..., $\lambda_n(A))$. We define $\sigma_{k}(A)=\sigma_{k}(\lambda(A))$. Similarly, denote by $\sigma_k(A|i)$ the symmetric function with $A$ deleting the $i$-row and $i$-column, and $\sigma_k(A|ij)$ the symmetric function with $A$ deleting the $i,j$-rows and $i,j$-columns. We have  (see e.g. \cite[\S 2]{CGLS22})
\begin{lemma}\label{AA}
Let $A=(A_{ij})$ be a diagonal $n \times n$ matrix. Then
\begin{equation*}
    \sigma_k^{ij}(A)=\frac{\partial\sigma_{k}}{\partial A_{ij}}(A)=\begin{cases}
    \sigma_{k-1}(A|i),& \quad if\quad i=j\\[2ex]
    0,\qquad &\quad if\quad i\neq j
    \end{cases}
\end{equation*}
and
\begin{equation*}
    \sigma_k^{ij,rs}(A)=\frac{\partial^2\sigma_{k}}{\partial A_{ij}\partial A_{rs}}(A)=
			\begin{cases}
				\displaystyle \sigma_{k-2}(A|ir), & \text{if}\quad i=j, r=s, i\neq r;\\[2ex]
				\displaystyle -\sigma_{k-2}(A|ij), & \text{if} \quad i\neq j, r=j, s=i;\\[2ex]
				\displaystyle 0, & \text{otherwise}.
			\end{cases}
\end{equation*}
\end{lemma}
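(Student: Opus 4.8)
The plan is to reduce everything to the expansion of $\sigma_k$ into a sum of principal minors and then to differentiate that expansion. Recall that for any $n\times n$ matrix $A$ (not necessarily diagonal) one has
$$\sigma_k(A)=\sum_{|I|=k}\det(A_I),$$
where the sum is over all $k$-element subsets $I\subseteq\{1,\dots,n\}$ and $A_I$ is the principal submatrix of $A$ on the rows and columns indexed by $I$; this is just the statement that $\sigma_k(A)$ is the coefficient of $\lambda^{n-k}$ in $\det(\lambda\,\mathrm{Id}+A)=\prod_i(\lambda+\lambda_i(A))$. Throughout, the entries $A_{ij}$ are treated as $n^2$ independent variables.

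For the first-order formula I would differentiate term by term. By the formula for the derivative of a determinant, $\partial\det(A_I)/\partial A_{ij}$ equals the $(i,j)$-cofactor of $A_I$ when $i,j\in I$, and is $0$ otherwise. When $A$ is diagonal, every $A_I$ is diagonal, so its cofactor matrix is diagonal; hence $\sigma_k^{ij}(A)=0$ for $i\ne j$, while the $(i,i)$-cofactor of $A_I$ equals $\prod_{l\in I\setminus\{i\}}A_{ll}$, and summing over all $k$-subsets $I$ containing $i$ yields precisely $\sigma_{k-1}(A|i)$.

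For the second-order formula I would expand each minor by the Leibniz rule $\det(A_I)=\sum_{\pi}\mathrm{sgn}(\pi)\prod_{l\in I}A_{l\pi(l)}$. A monomial contributes to $\partial^2/\partial A_{ij}\partial A_{rs}$ only if it contains both $A_{ij}$ and $A_{rs}$ as factors, and for it to survive evaluation at a diagonal $A$ the product of the remaining entries must be a product of diagonal entries, which forces the complementary permutation to be the identity. In the case $i=j$, $r=s$, $i\ne r$ this gives $\prod_{l\in I\setminus\{i,r\}}A_{ll}$, whose sum over the $k$-subsets $I\ni i,r$ is $\sigma_{k-2}(A|ir)$. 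In the case $i\ne j$, $r=j$, $s=i$ the surviving permutations act on $\{i,j\}$ as the transposition $(ij)$, with sign $-1$, giving $-\prod_{l\in I\setminus\{i,j\}}A_{ll}$ and hence $-\sigma_{k-2}(A|ij)$ after summation. Every other index pattern leaves an off-diagonal entry in the monomial, or (when $i=j=r=s$) asks for a quadratic dependence on a single entry which the multilinearity of $\det$ forbids, so it contributes $0$.

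The hard part is only the bookkeeping in the last step: keeping the signs from the Leibniz expansion straight and verifying that no exotic index pattern — for instance $i=j$ with $r\ne s$ — produces a hidden nonzero term at a diagonal matrix. Since this is a purely finite combinatorial check, I would present it compactly rather than enumerating every case in full.
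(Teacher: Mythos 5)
Your argument is correct, and it is worth noting that the paper does not prove this lemma at all: it is quoted as a known fact with a citation to \cite{CGLS22}, so there is no ``paper proof'' to match against. Your route --- expanding $\sigma_k(A)=\sum_{|I|=k}\det(A_I)$ over principal minors, differentiating via the cofactor formula for the first derivatives, and via the Leibniz expansion for the second derivatives, then using diagonality to kill every monomial that retains an off-diagonal factor --- is exactly the standard elementary derivation. The usual alternative (the one implicit in the cited reference, and in the generalized Kronecker-delta representation $\sigma_k(A)=\frac{1}{k!}\sum\delta(i_1,\dots,i_k;j_1,\dots,j_k)A_{i_1j_1}\cdots A_{i_kj_k}$) just packages the same bookkeeping into the antisymmetry of $\delta(I;J)$: differentiating twice leaves $\delta(i,r,\dots;j,s,\dots)$, which at a diagonal matrix forces the remaining indices to pair off diagonally and produces the sign $\pm1$ according to whether $(i,r;j,s)$ is $(i,r;i,r)$ or $(i,j;j,i)$. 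Your version is marginally longer but entirely self-contained; the delta version automates the ``no exotic pattern survives'' check you rightly flag as the only delicate point (your sketch of that check --- a leftover off-diagonal entry, or quadratic dependence on a single entry when $i=j=r=s$ --- is the correct reason in every residual case). One small point to make explicit if you write this up: you treat the $n^2$ entries $A_{ij}$ as independent variables, not constrained by symmetry; this is indeed the convention under which the stated formulas (without factors of $2$) hold and the one the paper uses when it sums $\sigma_k^{ij,rs}\xi_{ij}\xi_{rs}$ over all four indices, so your proof is consistent with the intended meaning of the lemma.
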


 Let $\Gamma^+:=\{\lambda\in\mathbb{R}^n,~\lambda_i>0,~\forall~i=1,\cdots,n\}$ be the positive cone in $\mathbb{R}^n$ and $\Gamma_k$ be the connected component of $\{\lambda\in\mathbb{R}^n:~\sigma_k(\lambda)>0\}$ containing the positive cone. It is know that $F=\sigma_k^{1/k}$ is concave in $\Gamma_k$ and inverse-concave in $\Gamma_+$. Here the concavity means that  (\cite[Theorem 3.2]{Ge15})
 \begin{equation*}
     F^{ij,rs}\xi_{ij}\xi_{rs}\leq 0,\quad \forall~ (\xi_{ij})\in\mathbb{R}^{n\times n}.
 \end{equation*}
 The inverse-concavity of $F=\sigma_k^{1/k}$ implies that at an invertible symmetric $A=(A_{ij})$,
	\begin{equation*}%\label{F1}
		F^{ij,rs}\xi_{ij}\xi_{rs}+2F^{ir}A^{js}\xi_{ij}\xi_{rs}\ge2F^{-1}(F^{ij}\xi_{ij})^2,\qquad \forall~ (\xi_{ij})\in\mathbb{R}^{n\times n}.
	\end{equation*}
	where $(A^{ij})$ is the inverse of $(A_{ij})$,  see \cite[(3.49))]{A91}. This in turn implies the following lemma.
	\begin{lemma}\label{sig}
		At any invertible symmetric matrix $A=(A_{ij})$, the $k$-th elementary symmetric function $\sigma_k(A)$ satisfies
		\begin{equation}%\nonumber
			\sigma_k^{ij,rs}\xi_{ij}\xi_{rs}+2\sigma_k^{ir}A^{js}\xi_{ij}\xi_{rs}\ge\dfrac{k+1}{k}\frac{(\sigma_k^{ij}\xi_{ij})^2}{\sigma_k},\qquad\forall~ (\xi_{ij})\in\mathbb{R}^{n\times n},
		\end{equation}
		where $(A^{ij})$ is the inverse of $(A_{ij})$.
	\end{lemma}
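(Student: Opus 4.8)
The plan is to obtain Lemma~\ref{sig} as a direct consequence of the inverse-concavity inequality for $F=\sigma_k^{1/k}$ recorded just above, simply by rewriting the derivatives of $F$ in terms of those of $\sigma_k$ through the chain rule. Working on $\Gamma_+$, where $A$ is invertible and $\sigma_k(A)>0$, the first step is the elementary computation
\begin{equation*}
	F^{ij}=\tfrac1k\,\sigma_k^{\frac1k-1}\,\sigma_k^{ij},\qquad
	F^{ij,rs}=\tfrac1k\,\sigma_k^{\frac1k-1}\,\sigma_k^{ij,rs}+\tfrac{1-k}{k^2}\,\sigma_k^{\frac1k-2}\,\sigma_k^{ij}\sigma_k^{rs}.
\end{equation*}

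Next I would substitute these into
\begin{equation*}
	F^{ij,rs}\xi_{ij}\xi_{rs}+2F^{ir}A^{js}\xi_{ij}\xi_{rs}\ \ge\ 2F^{-1}(F^{ij}\xi_{ij})^2 .
\end{equation*}
Abbreviating $a:=\sigma_k^{ij}\xi_{ij}$, $Q:=\sigma_k^{ij,rs}\xi_{ij}\xi_{rs}$ and $R:=\sigma_k^{ir}A^{js}\xi_{ij}\xi_{rs}$, the left-hand side becomes $\tfrac1k\sigma_k^{\frac1k-1}(Q+2R)+\tfrac{1-k}{k^2}\sigma_k^{\frac1k-2}a^2$, while $F^{-1}=\sigma_k^{-1/k}$ and $(F^{ij}\xi_{ij})^2=\tfrac1{k^2}\sigma_k^{\frac2k-2}a^2$ turn the right-hand side into $\tfrac2{k^2}\sigma_k^{\frac1k-2}a^2$. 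Multiplying the resulting inequality through by the positive factor $k\,\sigma_k^{1-\frac1k}$, all exponents of $\sigma_k$ in the $a^2$-terms collapse to $-1$ and one is left with
\begin{equation*}
	Q+2R+\tfrac{1-k}{k}\,\frac{a^2}{\sigma_k}\ \ge\ \tfrac2k\,\frac{a^2}{\sigma_k},
\end{equation*}
that is, $Q+2R\ge\bigl(\tfrac2k-\tfrac{1-k}{k}\bigr)\dfrac{a^2}{\sigma_k}=\dfrac{k+1}{k}\dfrac{(\sigma_k^{ij}\xi_{ij})^2}{\sigma_k}$, which is exactly the asserted inequality.

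I do not expect a genuine obstacle here: the whole argument is bookkeeping of the exponents of $\sigma_k$ produced by differentiating $\sigma_k^{1/k}$ twice. The only point that needs attention is the positivity of $\sigma_k(A)$, which both makes the statement meaningful (it appears in the denominator) and licenses multiplying by $\sigma_k^{1-1/k}$ and dividing by $\sigma_k$; this is why one works on $\Gamma_+$ (equivalently, on invertible $A$ lying in the cone where $F=\sigma_k^{1/k}$ is defined), and that is precisely the regime in which the lemma is later applied to $A[\varphi]$ in the full rank theorem.
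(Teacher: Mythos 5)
Your proof is correct and is essentially the paper's argument: the paper likewise derives the lemma directly from the inverse-concavity inequality for $F=\sigma_k^{1/k}$ (citing \cite{A91}), and your chain-rule bookkeeping with the exponents of $\sigma_k$ is exactly the omitted computation, including the correct handling of the positivity of $\sigma_k$ needed to divide through.
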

	
	\subsection{Horospherically convex domains and hypersurfaces}
	In this subsection, we will collect basic concepts and properties of horospherically convex hypersurfaces in $\mathbb{H}^{n+1}$. We refer to \cite{And20},\cite{Li-Xu} for details.
	
	We shall work in the hyperboloid model of $\mathbb{H}^{n+1}$, which we identify $\mathbb{H}^{n+1}$ with the future time-like hyperboloid in the Minkowski space $\mathbb{R}^{n+1,1}$. The horospheres are hypersurfaces in $\mathbb{H}^{n+1}$ with constant principal curvatures equal to $1$ everywhere and can be parametrized by $\mathbb{S}^n\times \mathbb{R}$:
	\begin{eqnarray*}
		H_{x}(r)=\{X \in \mathbb{H}^{n+1}~|~\langle X, (x, 1)\rangle=-e^r\},
	\end{eqnarray*}
	where $x \in \mathbb{S}^n$ and $r \in \mathbb{R}$ represents the signed geodesic distance from the ``north pole" $N =(0, 1)\in \mathbb{H}^{n+1}$. 	The interior region (called a horoball) is denoted by
	\begin{eqnarray}\label{Bx}
		B_{x}(r)=\{X \in \mathbb{H}^{n+1}~|~ 0>\langle X, (x,
		1)\rangle>-e^r\}.
	\end{eqnarray}
	
	A region $K\subset \mathbb{H}^{n+1}$ (or its boundary $\partial K$) is horospherically convex (or $h$-convex for short) if every boundary point $p$ of
	$\partial K$ has a supporting horo-ball, i.e. a horo-ball $B$ such that $K\subset {B}$ and $p \in \partial B$. When $K$ is smooth, it is $h$-convex if
	and only if the principal curvatures of $\partial K$ are greater than or equal to $1$. For a smooth compact $K$, we say that $K$ (or $\partial K$) is uniformly $h$-convex if its principal curvatures are greater than $1$.

	Let $K$ be a bounded $h$-convex domain in $\mathbb{H}^{n+1}$. Then for each $x\in \mathbb{S}^n$, we define the \textit{horospherical support function} of $K$ (or $\partial K$) in direction $x$ by
		\begin{equation*}
			u_{K}(x):= \inf\{s\in\mathbb{R}:K\subset {B}_x(s)\},\quad x\in \mathbb{S}^n.
		\end{equation*}
		Alternatively, we have
		\begin{equation*}
			u_{K}(x)= \sup\{\log(-\langle X,(x,1)\rangle ):X\in K\},\quad x\in \mathbb{S}^n.
		\end{equation*}
		The horospherical support function completely determines a horospherically convex domain $\Omega$ as an intersection of horoballs:
      \begin{equation*}
        K=\bigcap_{x\in\mathbb{S}^n}B_x(u_K(x)).
      \end{equation*}
       We will write $u(x)$ for $u_{K}(x)$ if there is no confusion about the choice of the domain.

	Let $\nu$ be the outward unit normal of $\partial K$. Since $\langle X(p),X(p)\rangle =-1$, $\langle \nu(p),\nu(p)\rangle =1$ and $\langle X(p),\nu(p)\rangle =0$ for each $p\in\partial K$, we have that $X-\nu$ is a null vector and so there exists $u\in\mathbb{R}$ and $x\in \mathbb{S}^n$ such that
 \begin{equation*}
     X-\nu=e^{-u}\left(x,1\right).
 \end{equation*}
 Then the \textit{horospherical Gauss map} $G:\partial K\to \mathbb{S}^n$ is defined by $G(p)=x$ and the function $u=u_K$ is just the horospherical support function. The derivative of the horospherical Gauss map is non-singular if $K$ is uniformly $h$-convex and in this case $G$ is a diffeomorphism from $\partial K$ to $\mathbb{S}^n$.

 Let $M=\partial K$ be the boundary of a smooth uniformly $h$-convex domain. We can reparametrize $M$ by $\mathbb{S}^n$ via the inverse of the horospherical Gauss map $X=G^{-1}:\mathbb{S}^n\to M$. Let $\varphi=e^{u_K}$ which we also call the horospherical support function and is bigger than $1$. We have  (see \cite[(5.5)]{And20})
	\begin{equation}\label{X}
		X(x)=\dfrac{1}{2}\varphi(-x,1)+\dfrac{1}{2}\left(\dfrac{|D\varphi|^2}{\varphi}+\dfrac{1}{\varphi}\right)(x,1)-(D\varphi,0).
	\end{equation}
By the definition, we have
	\begin{equation}\label{x-v}
		X(x)-\nu(x)=e^{-u(x)}(x,1)=\frac{1}{\varphi(x)}(x,1).
	\end{equation}
So
	\begin{equation}\label{s2.nu}
		\nu(x)=\dfrac{1}{2}\varphi(-x,1)+\dfrac{1}{2}\left(\dfrac{|D\varphi|^2}{\varphi}-\dfrac{1}{\varphi}\right)(x,1)-(D\varphi,0).
	\end{equation}
As in \cite[(5.9)]{And20}, we define a symmetric $2$-tensor on $\mathbb{S}^n$:
	\begin{equation*}
		A_{ij}[\varphi]=\varphi_{ij}-\dfrac{|D\varphi|^2}{2\varphi}\sigma_{ij}+\dfrac{1}{2}(\varphi-\dfrac{1}{\varphi})\sigma_{ij}.
	\end{equation*}
	Then
	\begin{equation}\label{Xj}
		\partial_iX(x)=-\sum_{j=1}^{n}A_{ij}[\varphi](e_j,0)+\dfrac{\sum_{j=1}^{n}A_{ij}[\varphi]D_j\varphi}{\varphi}(x,1).
	\end{equation}
	and
	\begin{equation}\label{Rij}
		(h_i^j-\delta_i^j)\varphi(x) A_{jk}[\varphi(x)]=\sigma_{ik},
	\end{equation}
	where $(h_i^j)$ represents the Weingarten matrix of $M$ evaluated at $X(x)$.
	The formula \eqref{Rij} demonstrates that if $\varphi$ is a smooth function on $\mathbb{S}^n$, the map \eqref{X} defines an embedding to the boundary of a uniformly $h$-convex domain if and only if the tensor $A[\varphi]$ is positive definite. Using \eqref{Xj}, we also establish that $g_{ij}=\sum_kA_{ik}[\varphi]A_{kj}[\varphi]$. Since $A[\varphi]$ is positive definite, the area element of $M$ can be expressed as
	\begin{equation}\label{du}
		d\mu=\sqrt{\det g_{ij}}d\sigma=\det A[\varphi]d\sigma.
	\end{equation}
	
	\begin{definition}
		Let $M^n$ be a smooth uniformly h-convex hypersurface in $\mathbb{H}^{n+1}$. We define the hyperbolic curvature radii $\{\mathcal{R}_1,...,\mathcal{R}_n\}$ of $M^n$ at $X(x)$ as
		\begin{eqnarray*}
			\mathcal{R}_i=\dfrac{1}{k_i-1},
		\end{eqnarray*}
		where $\kappa=(k_1,...,k_n)$ are the principal curvatures of $M^n$.
		
		From \eqref{Rij}, we understand that $\{\mathcal{R}_1,...,\mathcal{R}_n\}$ of $M^n$ at $X(x)$ exactly correspond to the eigenvalues of $(\varphi A[\varphi])$ at $x$ respect to the round metric $g_{\mathbb{S}^n}$ on $\mathbb{S}^n$.
	\end{definition}

 \subsection{Relation to the conformal metric on $\mathbb{S}^n$}
	In this section, we mention an interesting connection between $h$-convex hypersurfaces in the hyperbolic space and the conformally flat metric on the sphere $\mathbb{S}^n$. This was essentially observed by Espinar-G\'alvez-Mira \cite{Esp09}.  In fact, they proposed  the following \textit{generalized Christoffel problem}  in $\mathbb{H}^{n+1}$.
	\begin{problem}[See \S 5 in \cite{Esp09}]\label{s2.prob}
	    Let $f:\mathbb{S}^n\to\mathbb{R}$ be a smooth positive function. Find if there exists a smooth uniformly h-convex hypersurface $M^n$ in $\mathbb{H}^{n+1}$ such that
		\begin{equation}\nonumber
			\sigma_k(\mathcal{R}_1,...,\mathcal{R}_n)=\varphi^k\sigma_k(A[\varphi])=f,
		\end{equation}
		where $\{\mathcal{R}_1,...,\mathcal{R}_n\}$ are the hyperbolic curvature radii of $M^n$.
	\end{problem}
	
	This problem is equivalent to the equation \eqref{equa} for $p=0$.  It is showed in \cite{Esp09} that this problem is precisely equivalent to the question of prescribing a given functional of the eigenvalues of the Schouten tensor for a conformal metric on $\mathbb{S}^n$.
	
	\begin{lemma}\cite[\S 7]{And20}
		Let $M^n$ be a smooth uniformly $h$-convex hypersurface in $\mathbb{H}^{n+1}$ with horospherical support function $\varphi=e^u:\mathbb{S}^n\to \mathbb{R}$ and hyperbolic curvature radii $\{\mathcal{R}_1,...,\mathcal{R}_n\}$. Denote $g=\varphi^{-2}g_{\mathbb{S}^n}$ which is a conformally flat metric on $\mathbb{S}^n$ and let $\{\lambda_1,...,\lambda_n\}$ be the eigenvalues of the Schouten tensor $Sch_g$ of the metric $g$ on $\mathbb{S}^n$.  Then
		\begin{equation}\label{sch}
			\mathcal{R}_i=\lambda_i-\frac{1}{2},\quad i=1,\cdots,n.
		\end{equation}
		Moreover, the eigendirections of $Sch_g$ coincide with the principal directions of $M^n$.
	\end{lemma}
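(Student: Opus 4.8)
The plan is to compute the Schouten tensor $Sch_g$ of the conformal metric $g=\varphi^{-2}g_{\mathbb{S}^n}$ directly from the conformal transformation law and to recognise the result as an affine combination of $A[\varphi]$ and $g$; the eigenvalue identity \eqref{sch} and the statement about eigendirections then drop out after raising an index with $g$ and invoking \eqref{Rij}. Throughout $n\ge 3$, so that the Schouten tensor is defined.

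Recall that under a conformal change $g=e^{2w}g_0$ on an $n$-manifold with $n\ge3$ the Schouten tensor transforms by
\[
Sch_g=Sch_{g_0}-\nabla^2_{g_0}w+dw\otimes dw-\tfrac12|dw|^2_{g_0}\,g_0,
\]
all operations on the right being taken with respect to $g_0$. I would apply this with $g_0=g_{\mathbb{S}^n}$ and $w=-u=-\log\varphi$. Since the round sphere is Einstein with $Ric_{g_{\mathbb{S}^n}}=(n-1)g_{\mathbb{S}^n}$, a one-line computation gives $Sch_{g_{\mathbb{S}^n}}=\tfrac12 g_{\mathbb{S}^n}$. Writing $w_i=-\varphi_i/\varphi$ and $w_{ij}=-\varphi_{ij}/\varphi+\varphi_i\varphi_j/\varphi^2$, the two quadratic first-order terms cancel against each other, and the computation collapses to
\[
Sch_g=\tfrac12 g_{\mathbb{S}^n}+\frac1\varphi D^2\varphi-\frac{|D\varphi|^2}{2\varphi^2}g_{\mathbb{S}^n}
=\frac1\varphi\Big(D^2\varphi-\frac{|D\varphi|^2}{2\varphi}g_{\mathbb{S}^n}+\frac{\varphi}{2}g_{\mathbb{S}^n}\Big).
\]
Comparing the bracket with the definition \eqref{s1.Aij} of $A_{ij}[\varphi]$, it equals $A[\varphi]+\tfrac1{2\varphi}g_{\mathbb{S}^n}$, hence
\[
Sch_g=\frac1\varphi A[\varphi]+\frac1{2\varphi^2}g_{\mathbb{S}^n}=\frac1\varphi A[\varphi]+\frac12 g.
\]

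Finally, the eigenvalues $\lambda_i$ of $Sch_g$ are by convention those of the $g$-self-adjoint endomorphism $g^{-1}Sch_g$; using $g^{-1}=\varphi^2 g_{\mathbb{S}^n}^{-1}$ gives $g^{-1}Sch_g=\varphi\,g_{\mathbb{S}^n}^{-1}A[\varphi]+\tfrac12\,\mathrm{Id}$. Thus $\lambda_i=\varphi\mu_i+\tfrac12$, where $\mu_i$ are the eigenvalues of $A[\varphi]$ with respect to $g_{\mathbb{S}^n}$, and the eigendirections of $Sch_g$ coincide with those of $g_{\mathbb{S}^n}^{-1}A[\varphi]$, equivalently of $g_{\mathbb{S}^n}^{-1}(\varphi A[\varphi])$. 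Since by definition $\mathcal{R}_i=\varphi\mu_i$ are precisely the eigenvalues of $\varphi A[\varphi]$ with respect to $g_{\mathbb{S}^n}$, this yields \eqref{sch}. Moreover, \eqref{Rij} reads $(h-\mathrm{Id})\circ(\varphi A[\varphi])=\mathrm{Id}$ once an index is raised, so the Weingarten endomorphism $h$ shares its eigenspaces with $\varphi A[\varphi]$, hence with $Sch_g$, which is the claim about principal directions. The only point requiring care is the bookkeeping of sign conventions in the conformal transformation law for $Sch$ and the fact that eigenvalues of a $(0,2)$-tensor are taken relative to the ambient metric $g$, not to $g_{\mathbb{S}^n}$; there is no analytic difficulty, so this is a verification rather than a genuine obstacle.
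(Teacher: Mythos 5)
Your computation is correct, and it supplies a genuine proof where the paper itself gives none: the lemma is simply quoted from \cite[\S 7]{And20}, so there is no in-paper argument to compare against. Your route is the natural one and matches what the cited reference does in spirit: apply the conformal transformation law of the Schouten tensor with $g_0=g_{\mathbb{S}^n}$, $w=-\log\varphi$, use $Sch_{g_{\mathbb{S}^n}}=\tfrac12 g_{\mathbb{S}^n}$, and observe the cancellation of the $\varphi_i\varphi_j/\varphi^2$ terms to land on $Sch_g=\tfrac1\varphi A[\varphi]+\tfrac12 g$, which together with the definition \eqref{s1.Aij} and the inversion relation \eqref{Rij} gives \eqref{sch}. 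I checked the transformation law you quote (it is the standard one, and your sign conventions are consistent: it reproduces $Sch=\tfrac12 g$ for the round metric), the identity $A[\varphi]+\tfrac1{2\varphi}g_{\mathbb{S}^n}=D^2\varphi-\tfrac{|D\varphi|^2}{2\varphi}g_{\mathbb{S}^n}+\tfrac{\varphi}{2}g_{\mathbb{S}^n}$, and the eigenvalue bookkeeping with respect to $g$ rather than $g_{\mathbb{S}^n}$; all are right, and you correctly restrict to $n\ge3$. The only point worth stating explicitly is in the final claim: the eigenvectors of $g^{-1}Sch_g$ live on $\mathbb{S}^n$, while principal directions live on $M$, so ``coincide'' means correspondence under the parametrization $X=G^{-1}:\mathbb{S}^n\to M$; your use of \eqref{Rij}, which exhibits $h-\mathrm{Id}$ and the raised $\varphi A[\varphi]$ as mutually inverse endomorphisms in that parametrization, is exactly the right mechanism, and one sentence noting the identification via $dX$ (cf.\ \eqref{Xj}) would make it airtight.
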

	
	By \eqref{sch} we have
	\begin{equation}\nonumber
		\sigma_k(\mathcal{R}_1,...,\mathcal{R}_n)=\sum_{j=0}^{k}C_{n-j}^{k-j}(-1)^{k-j}2^{j-k}\sigma_j(\lambda_1,...,\lambda_n).
	\end{equation}
	This implies that Problem \ref{s2.prob} in $\mathbb{H}^{n+1}$ is equivalent to the Nirenberg-type problem in $\mathbb{S}^n$ for which the following linear combination of the $\sigma_k$-curvatures is prescribed:
	\begin{equation}\nonumber
		\sum_{j=0}^{k}c_j\sigma_j(Sch_g)=f,\qquad c_j:=C_{n-j}^{k-j}(-1)^{k-j}2^{j-k}
	\end{equation}
	In particular, if $k=1$, then
	\begin{align*}
		\sigma_1(\mathcal{R}_1,...,\mathcal{R}_n)=&\sigma_1(\lambda_1,...,\lambda_n)-\dfrac{n}{2}	=\frac{S(g)}{2(n-1)}-\frac{n}{2},
	\end{align*}
	where $S(g)$ is the scalar curvature of $g$. Hence Problem \ref{s2.prob} for $k=1$ is equivalent to the prescribed scalar curvature problem in the conformal class on $\mathbb{S}^n$ which is called Nirenberg-Kazdan-Warner problem.
	
\section{A priori estimates}\label{sec3}
    In this section, we establish a priori estimates for solutions of equation \eqref{equa}. Equation \eqref{equa} becomes uniformly elliptic once $C^{2}$ estimates are established for $\varphi$ and is concave with respect to the second derivatives of $\varphi$. Using the Evans-Krylov theorem and Schauder theory, one can obtain higher derivative estimates for $\varphi$. Therefore, we only need to prove $C^{2}$ estimates for $\varphi$.
	
	We first establish the $C^0$ estimate.
	\begin{lemma}\label{lc0}
		Let $f$ be a smooth, even and positive function satisfying Assumption \ref{assum} and $\varphi>1$ be a smooth, even, h-convex solution of equation \eqref{equa}. There exist constants $C_0,C_1,C_2>1$ depending only on the lower and upper bounds of $f$, $p$, $k$ and $n$ such that
  \begin{enumerate}
      \item If $p=0$, then
		\begin{equation}
			\label{c0}
			1<C_0 \le \min\limits_{\mathbb{S}^n}\varphi \le \max\limits_{\mathbb{S}^n}\varphi \le
			\left(1+2\max\limits_{\mathbb{S}^n}\left(\dfrac{f}{C^k_n}\right)^{\frac{1}{k}}\right)^{\frac{1}{2}}+\left(2\max\limits_{\mathbb{S}^n}\left(\dfrac{f}{C^k_n}\right)^{\frac{1}{k}}\right)^{\frac{1}{2}}.
		\end{equation}
		\item If $p>0$, then
		\begin{equation}\label{s3.c02}
			1<C_1 \le \min\limits_{\mathbb{S}^n}\varphi \le \max\limits_{\mathbb{S}^n}\varphi \le C_2.
		\end{equation}
  \end{enumerate}		
	\end{lemma}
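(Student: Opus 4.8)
The plan is to reduce the $C^{0}$ estimate to evaluating the equation \eqref{equa} at the extrema of $\varphi$, which produces one-sided bounds, and then to upgrade these to a genuine two-sided bound using the evenness of $\varphi$. Concretely, I would evaluate \eqref{equa} at a maximum point $x_0$ and a minimum point $x_1$ of $\varphi$, and set $M=\max_{\mathbb{S}^n}\varphi$, $m=\min_{\mathbb{S}^n}\varphi$. At these points $D\varphi=0$, so $A_{ij}[\varphi]=\varphi_{ij}+\tfrac12(\varphi-\varphi^{-1})\sigma_{ij}$; since $(\varphi_{ij})\le 0$ at $x_0$ and $(\varphi_{ij})\ge0$ at $x_1$ while $A[\varphi]\ge0$, monotonicity of $\sigma_k$ on the nonnegative cone together with \eqref{equa} gives
\begin{equation*}
2^{k}M^{p}f(x_0)\le C_n^{k}(M^{2}-1)^{k},\qquad 2^{k}m^{p}f(x_1)\ge C_n^{k}(m^{2}-1)^{k}.
\end{equation*}
Writing $\psi(t):=(t^{2}-1)^{k}t^{-p}$ on $(1,\infty)$, these say $\psi(M)\ge 2^{k}\min f/C_n^{k}$ and $\psi(m)\le 2^{k}\max f/C_n^{k}$, and I would then run through the monotonicity of $\psi$ case by case, which is exactly the split in Assumption \ref{assum}: for $p=0$, $\psi$ is increasing, so $M^{2}\ge1+2(\min f/C_n^{k})^{1/k}$ and $m^{2}\le1+2(\max f/C_n^{k})^{1/k}$; for $0<p<2k$, $\psi$ increases from $0$ to $+\infty$, giving $M\ge M_{-}>1$ and $m\le m_{+}<\infty$; for $p=2k$, $\psi=(1-t^{-2})^{k}\in(0,1)$ and the smallness $\max f<C_n^{k}/2^{k}$ of Condition (5) makes $2^{k}\max f/C_n^{k}<1$, yielding both $M\ge M_{-}>1$ and $m\le m_{+}<\infty$; for $p>2k$, the supremum of $\psi$ over $(1,\infty)$ equals $2^{k}\gamma_p/C_n^{k}$ and is attained at $t^{2}=p/(p-2k)$, so Condition (6) is precisely the statement that $\{\psi\ge 2^{k}\min f/C_n^{k}\}$ is a nonempty compact subinterval $[M_{-},M_{+}]\subset(1,\infty)$, whence $M_{-}\le M\le M_{+}$. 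In every case $M_{\pm},m_{+}$ depend only on $n,k,p$ and the bounds of $f$, and one always has $M=\max\varphi\ge M_{-}>1$.

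These one-sided bounds go the wrong way --- they control $\max\varphi$ from below and $\min\varphi$ from above --- so the evenness is needed to close the argument. Since $\varphi$ is the horospherical support function of an $h$-convex domain $K\subset\mathbb{H}^{n+1}$, one has $\varphi(y)=\sup_{z\in\mathbb{S}^n}(-\langle X(z),(y,1)\rangle)\ge-\langle X(x_0),(y,1)\rangle$ for every $y\in\mathbb{S}^n$. Plugging the maximum point $x_0$ (where $D\varphi=0$) into \eqref{X} gives
\begin{equation*}
-\langle X(x_0),(y,1)\rangle=\tfrac{M}{2}(1+x_0\cdot y)+\tfrac{1}{2M}(1-x_0\cdot y),
\end{equation*}
and replacing $y$ by $-y$ and averaging, using $\varphi(-y)=\varphi(y)$, yields the clean inequality
\begin{equation*}
2\varphi(y)\ \ge\ M+\tfrac1M\qquad\text{for every }y\in\mathbb{S}^n,
\end{equation*}
i.e. $2\min\varphi\ge\max\varphi+(\max\varphi)^{-1}$, equivalently $\max\varphi\le\min\varphi+\sqrt{(\min\varphi)^{2}-1}$. (If $A[\varphi]$ is merely nonnegative, apply this to $\varphi+\varepsilon$, noting $A[\varphi+\varepsilon]>A[\varphi]\ge0$, and let $\varepsilon\to0$.) Combining with the previous step: since $\max\varphi\ge M_{-}>1$, we obtain $\min\varphi\ge\tfrac12(M_{-}+M_{-}^{-1})>1$, which is the claimed lower bound $C_0$ (resp. $C_1$); and when $p\le2k$ we have $\min\varphi\le m_{+}$, hence $\max\varphi\le m_{+}+\sqrt{m_{+}^{2}-1}$, which for $p=0$ with $m_{+}=(1+2\max(f/C_n^{k})^{1/k})^{1/2}$ is exactly the explicit bound in \eqref{c0}; when $p>2k$ one already has $\max\varphi\le M_{+}$, and in all cases $\min\varphi\le\max\varphi$. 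The $C^{2}$ and higher estimates then follow as explained at the start of the section.

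The main obstacle is this conversion: the maximum principle by itself controls the quantities only in the direction opposite to what is asserted, and without the evenness of $f$ the two-sided bound is false. The crucial point is the explicit evaluation of $\langle X(x_0),(y,1)\rangle$ from \eqref{X} at a critical point of $\varphi$, which produces the symmetric inequality $2\varphi\ge\max\varphi+(\max\varphi)^{-1}$; once that is available, the rest is the elementary analysis of $\psi$ on $(1,\infty)$, and this is also what dictates the organization of Assumption \ref{assum}, the smallness conditions in (5) and (6) being forced precisely because $\psi$ is bounded on $(1,\infty)$ as soon as $p\ge 2k$, so that the relevant level sets of $\psi$ are nonempty.
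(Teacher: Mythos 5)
Your proposal is correct and follows essentially the same route as the paper: evaluate \eqref{equa} at the extrema of $\varphi$ to get the one-sided bounds, then flip them using the evenness-based inequality $2\min_{\mathbb{S}^n}\varphi\ge\max_{\mathbb{S}^n}\varphi+(\max_{\mathbb{S}^n}\varphi)^{-1}$ (the paper's \eqref{pin0}, applied to $\varphi+\varepsilon$ and letting $\varepsilon\to0$, exactly as you do), with the case analysis of $\psi(t)=(t^2-1)^k t^{-p}$ being an equivalent repackaging of the paper's function $g(t)=at^{p/k}-t^2+1$. The only difference is that the paper cites \eqref{pin0} from Li--Xu, whereas you derive it directly from \eqref{X} and the support-function characterization at a critical point; that derivation is correct and self-contained.
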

	\begin{proof}
	We first recall a key inequality for the horospherical support function $\varphi=e^u: \mathbb{S}^n\to \mathbb{R}$ of an origin symmetric, uniformly $h$-convex hypersurface in $\mathbb{H}^{n+1}$:
 \begin{equation}\label{pin0}
			\dfrac{1}{2}(\max\limits_{\mathbb{S}^n}\varphi+\dfrac{1}{\max\limits_{\mathbb{S}^n}\varphi}) \le \min\limits_{\mathbb{S}^n}\varphi.
		\end{equation}
  This inequality can be found in the proof of \cite[Lemma 7.2]{Li-Xu}. Suppose that $\varphi>1$ is a smooth, even, $h$-convex solution of equation \eqref{equa}, then $\varphi+\varepsilon$ satisfies $A[\varphi+\varepsilon]>0$ and defines a smooth origin symmetric,  uniformly $h$-convex hypersurface in $\mathbb{H}^{n+1}$. By letting $\varepsilon\to 0$, we see that $\varphi$ still satisfies the estimate \eqref{pin0}.

  Let $\varphi(x_{0})$=$\max\limits_{\mathbb{S}^n}\varphi$ and $\varphi(x_{1})$=$\min\limits_{\mathbb{S}^n}\varphi$, then we have
		\begin{equation}\nonumber
			|D\varphi|(x_0)=|D\varphi|(x_1)=0,\quad	D^2\varphi(x_0)\le0\le D^2\varphi(x_1).
		\end{equation}
		\begin{enumerate}
		    \item If $p=0$, applying the maximum principle from the equation \eqref{equa}, we get
		\begin{equation}
			\nonumber
			\max\limits_{\mathbb{S}^n}\varphi\ge \left(1+2\min\limits_{\mathbb{S}^n}\left(\dfrac{f}{C^k_n}\right)^{\frac{1}{k}}\right)^{\frac{1}{2}} > 1.
		\end{equation}
		Similarly, we have
		\begin{equation}
			\nonumber
			\min\limits_{\mathbb{S}^n}\varphi\le \left(1+2\max\limits_{\mathbb{S}^n}\left(\dfrac{f}{C^k_n}\right)^{\frac{1}{k}}\right)^{\frac{1}{2}}.
		\end{equation}
		Combining with \eqref{pin0} and noting that the function $x+\dfrac{1}{x}$ is increasing when $x\ge1$, we find
		\begin{equation}
			\nonumber
			1<C_0 \le \min\limits_{\mathbb{S}^n}\varphi \le \max\limits_{\mathbb{S}^n}\varphi \le
			\left(1+2\max\limits_{\mathbb{S}^n}\left(\dfrac{f}{C^k_n}\right)^{\frac{1}{k}}\right)^{\frac{1}{2}}+\left(2\max\limits_{\mathbb{S}^n}\left(\dfrac{f}{C^k_n}\right)^{\frac{1}{k}}\right)^{\frac{1}{2}},
		\end{equation}
		where $C_0$ can be taken as
  \begin{equation*}
      C_0=\frac{1}{2}\left(1+2\min\limits_{\mathbb{S}^n}\left(\dfrac{f}{C^k_n}\right)^{\frac{1}{k}}\right)^{\frac{1}{2}}+\frac{1}{2}\left(1+2\min\limits_{\mathbb{S}^n}\left(\dfrac{f}{C^k_n}\right)^{\frac{1}{k}}\right)^{-\frac{1}{2}}>1.
  \end{equation*}
		\item If $0<p\le2k$, applying the maximum principle, we have
		\begin{equation}\nonumber
			\max\limits_{\mathbb{S}^n}\varphi^2\ge 1+2\min\limits_{\mathbb{S}^n}\left(\dfrac{f}{C^k_n}\right)^{\frac{1}{k}}\max\limits_{\mathbb{S}^n}\varphi^{\frac{p}{k}}\ge1+2\min\limits_{\mathbb{S}^n}\left(\dfrac{f}{C^k_n}\right)^{\frac{1}{k}}> 1.
		\end{equation}
		Moreover, we have
		\begin{equation}\nonumber
			\min\limits_{\mathbb{S}^n}\varphi^2\le 1+2\max\limits_{\mathbb{S}^n}\left(\dfrac{f}{C^k_n}\right)^{\frac{1}{k}}\min\limits_{\mathbb{S}^n}\varphi^{\frac{p}{k}}.
		\end{equation}
		If $0<{p}/{k}<2$, we conclude that $\min_{\mathbb{S}^n}\varphi$ is upper bounded. If ${p}/{k}=2$, by our assumption $2\max\limits_{\mathbb{S}^n}\left(\dfrac{f}{C^k_n}\right)^{\frac{1}{k}}<1$,  we also conclude that $\min_{\mathbb{S}^n}\varphi$ is upper bounded. Combining with \eqref{pin0}, we get $1<C_1 \le \min\limits_{\mathbb{S}^n}\varphi \le \max\limits_{\mathbb{S}^n}\varphi \le C_2$, where $C_1,C_2$ depend only on the lower and upper bounds of $f$, $p$, $k$ and $n$.

        \item If $p>2k$, applying the maximum principle, we also have
        \begin{equation}\label{Ineq}
		2\min\limits_{\mathbb{S}^n}\left(\dfrac{f}{C^k_n}\right)^{\frac{1}{k}}\max\limits_{\mathbb{S}^n}\varphi^{\frac{p}{k}}-\max\limits_{\mathbb{S}^n}\varphi^2+1\le 0.
		\end{equation}
        Let $a=2\min\limits_{\mathbb{S}^n}\left(\dfrac{f}{C^k_n}\right)^{\frac{1}{k}}$. By the assumption, we have
        \begin{equation*}
           a\le\dfrac{2k}{p}\Big(\dfrac{p-2k}{p}\Big)^{\frac{p-2k}{2k}}=:a_0.
        \end{equation*}
        We consider a function $g(t)=a t^{\frac{p}{k}}-t^2+1$ on $(1,
        \infty)$, then
        \begin{equation*}
            g'(t)=t\Big(a\dfrac{p}{k}t^{\frac{p}{k}-2}-2\Big).
        \end{equation*}
       It follows that $g(t)$ attains its minimum at
       \begin{equation}\label{s3.xi0}
           t_0=\Big(\dfrac{2k}{a p}\Big)^{\frac{k}{p-2k}}=\left(\frac{a_0}{a}\right)^{\frac{k}{p-2k}}\left(\frac{p}{p-2k}\right)^{\frac{1}{2}}>1
       \end{equation}
       and the minimum value
        \begin{equation*}
            g(t_0)=\Big(\dfrac{2k}{a p}\Big)^{\frac{2k}{p-2k}}\Big(\dfrac{2k}{p}-1\Big)+1\le -\left(\frac{a_0}{a}\right)^{\frac{k}{p-2k}}+1=0.
        \end{equation*}
        Note that $g(t_0)=0$ if $a=a_0$, i.e. the minimum value of $g$ is zero.  Furthermore, it is obvious that $g(1)=a>0$, $g(t)$ is decreasing in $(1,t_0)$ and is increasing on $(t_0,+\infty)$. Moreover, $g(t)$ tends to $+\infty$ as $t\to +\infty$.

        Hence, if $a=a_0$, then $\max\limits_{\mathbb{S}^n}\varphi=t_0$, which combined with \eqref{pin0} gives the estimate \eqref{s3.c02}. If $a<a_0$, then there exist two distinct points $1<t_1<t_2<\infty$ depending only on $a,p$ and $k$ such that
        \begin{equation}\label{s3.xi}
            \{g(t)\le 0\}=\{t\in (t_1,t_2)\}.
        \end{equation}
        So that we have $1<t_1\le\max\limits_{\mathbb{S}^n}\varphi\le t_2$ by \eqref{Ineq}. Combining with \eqref{pin0}, we complete the proof.
		\end{enumerate}
	\end{proof}
    \begin{remark}
        From the proof of Lemma \ref{lc0}, when $p\ge 2k$, it is necessary to add the upper bound condition to the function $f$.
    \end{remark}
	
	It is well-known that $\max_{\mathbb{S}^n}|Du| \le \max_{\mathbb{S}^n}u$ if $u:\mathbb{S}^n\rightarrow \mathbb{R}_+$ is a support function for a smooth, uniformly convex hypersurface in $\mathbb{R}^{n+1}$. A similar conclusion holds for origin symmetric uniformly $h$-convex hypersurfaces in $\mathbb{H}^{n+1}$. the following useful lemma was established by Li-Xu \cite[Lemma 7.3]{Li-Xu}.
	
	\begin{lemma}\label{kc1}
		Let $M$ be a smooth, origin symmetric and uniformly h-convex hypersurface in $\mathbb{H}^{n+1}$ with horospherical support function $\varphi=e^u$, we have
		\begin{equation}
			\label{c1}
			\dfrac{|D\varphi|^2}{\varphi^2}\le1-\dfrac{1}{\varphi^2}\le1,\ \ \forall\ x \in \mathbb{S}^n.
		\end{equation}
	\end{lemma}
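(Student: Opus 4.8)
The plan is to read off \eqref{c1} directly from the geometric meaning of the horospherical support function, exploiting the origin symmetry exactly as one bounds the gradient of the support function of a centrally symmetric convex body in $\mathbb{R}^{n+1}$ by its values. Recall that origin symmetry of $M=\partial K$ amounts to the evenness $\varphi(-x)=\varphi(x)$, and that (since $K$ is $h$-convex) $K$ is contained in each of its supporting horoballs; thus, by \eqref{Bx} and the definition of $u_K$, the boundary point $X(x)$ with horospherical Gauss image $x$ satisfies
\begin{equation*}
-\varphi(y)\ \le\ \langle X(x),(y,1)\rangle\ \le\ 0\qquad\text{for all }x,y\in\mathbb{S}^n .
\end{equation*}

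First I would specialize this containment to the antipodal direction $y=-x$. Next I would compute $\langle X(x),(-x,1)\rangle$ from the explicit parametrization \eqref{X}: the vectors $(x,1)$ and $(-x,1)$ are null, $\langle(x,1),(-x,1)\rangle=-2$, and $\langle(D\varphi,0),(-x,1)\rangle=-D\varphi\cdot x=0$ because $D\varphi$ is tangent to $\mathbb{S}^n$ at $x$; substituting into \eqref{X} yields
\begin{equation*}
\langle X(x),(-x,1)\rangle=-\frac{|D\varphi|^2+1}{\varphi}.
\end{equation*}
Then, invoking evenness $\varphi(-x)=\varphi(x)$, the lower bound $\langle X(x),(-x,1)\rangle\ge-\varphi(-x)=-\varphi$ rearranges (after multiplying by $-\varphi<0$ and dividing by $\varphi^2$) to
\begin{equation*}
\frac{|D\varphi|^2}{\varphi^2}\le 1-\frac{1}{\varphi^2},
\end{equation*}
which is precisely \eqref{c1}; and $1-\varphi^{-2}\le 1$ is immediate. (The upper bound $\langle X(x),(-x,1)\rangle\le 0$ is automatically consistent with the displayed formula, so no extra input is needed.)

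There is no genuine obstacle: the only points requiring care are the sign conventions for the Lorentzian inner product on $\mathbb{R}^{n+1,1}$ and the translation of ``origin symmetric'' into evenness of $\varphi$, after which the conclusion is a one-line manipulation. This reproduces the argument of Li–Xu \cite[Lemma 7.3]{Li-Xu}.
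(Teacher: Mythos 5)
Your proof is correct: the paper itself does not prove Lemma \ref{kc1} but only cites \cite{Li-Xu}*{Lemma 7.3}, and your argument is a valid self-contained derivation of exactly the cited estimate. The key computation $\langle X(x),(-x,1)\rangle=-\frac{|D\varphi|^2+1}{\varphi}$ from \eqref{X} checks out (using $\langle(x,1),(-x,1)\rangle=-2$, nullity of $(-x,1)$, and $D\varphi\perp x$), and combining it with $K\subset \overline{B}_{-x}(u(-x))$ from \eqref{Bx} together with evenness $\varphi(-x)=\varphi(x)$ immediately gives $|D\varphi|^2+1\le\varphi^2$, which is \eqref{c1}.
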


 % I moved the proof to the end of the tex file.

By approximation, the estimate \eqref{c1} also holds for even, $h$-convex solution $\varphi$ of the equation \eqref{equa}. Combining Lemma \ref{lc0} and \eqref{c1}, we immediately get the following $C^1$ estimate.
	\begin{lemma}\label{lc1}
        Let $f$ be a smooth, even and positive function satisfying Assumption \ref{assum} and $\varphi>1$ be a smooth even  $h$-convex solution of equation \eqref{equa}. We have
		\begin{equation}
			\nonumber
			|D\varphi|\le C,\ \ \forall x \in \mathbb{S}^n,
		\end{equation}
		where C is a positive constant depending only on the constants in Lemma \ref{lc0}.
	\end{lemma}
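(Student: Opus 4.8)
The plan is simply to combine the pointwise gradient bound of Lemma \ref{kc1} with the $C^0$ estimate of Lemma \ref{lc0}, via an approximation step; no new computation is required.

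First I would reduce to the situation already covered by Lemma \ref{kc1}. Given a smooth, even, $h$-convex solution $\varphi>1$ of \eqref{equa}, one checks (exactly as in the proof of Lemma \ref{lc0}) that for every $\varepsilon>0$ the perturbed function $\varphi+\varepsilon$ satisfies $A[\varphi+\varepsilon]>0$: the terms produced by the shift contribute a nonnegative multiple of $\sigma_{ij}$ coming from $-|D\varphi|^2/(2\varphi)$ and a strictly positive multiple of $\sigma_{ij}$ coming from $\tfrac12(\varphi-1/\varphi)$, so $A[\varphi+\varepsilon]\ge A[\varphi]+c\varepsilon\,\sigma_{\mathbb{S}^n}>0$. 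Hence by \eqref{Rij}, $\varphi+\varepsilon$ is the horospherical support function of a smooth, origin-symmetric, uniformly $h$-convex hypersurface in $\mathbb{H}^{n+1}$. Applying Lemma \ref{kc1} to $\varphi+\varepsilon$ gives
\[
\frac{|D\varphi|^2}{(\varphi+\varepsilon)^2}\le 1-\frac{1}{(\varphi+\varepsilon)^2}\le 1
\]
on $\mathbb{S}^n$, and letting $\varepsilon\to 0^+$ yields the estimate \eqref{c1} for $\varphi$ itself, i.e. $|D\varphi|^2\le\varphi^2-1\le\varphi^2$.

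It then remains only to bound $\varphi$ from above, which is precisely the content of Lemma \ref{lc0}: under Assumption \ref{assum}, $\max_{\mathbb{S}^n}\varphi$ is controlled by a constant depending only on the upper and lower bounds of $f$ together with $p$, $k$ and $n$ — the explicit bound in \eqref{c0} when $p=0$, the constant $C_2$ in \eqref{s3.c02} when $p>0$. Combining the two inequalities gives $|D\varphi|\le\varphi\le\max_{\mathbb{S}^n}\varphi\le C$ with $C$ depending only on the constants in Lemma \ref{lc0}, which is the assertion. There is essentially no obstacle here; the only step worth a remark is the approximation, and it is harmless because $\varphi\mapsto A[\varphi]$ is continuous, the condition $A[\varphi]>0$ is open (so $\varphi+\varepsilon$ is genuinely uniformly $h$-convex and Lemma \ref{kc1} applies verbatim), and the resulting inequality passes to the limit.
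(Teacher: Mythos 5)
Your proposal is correct and follows the same route as the paper: the paper obtains the bound by noting that \eqref{c1} extends from uniformly $h$-convex to even $h$-convex solutions by the same $\varphi+\varepsilon$ approximation used in Lemma \ref{lc0}, and then combines $|D\varphi|^2\le \varphi^2-1$ with the $C^0$ bound of Lemma \ref{lc0}. You merely spell out the approximation step (including the verification $A[\varphi+\varepsilon]\ge A[\varphi]+\tfrac{\varepsilon}{2}g_{\mathbb{S}^n}>0$) that the paper leaves implicit, and your computation there is accurate.
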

	
	Since $A[\varphi]$ is not Codazzi, we need the following lemma to commute the covariant derivatives.
	\begin{lemma}\label{d}
		For each fixed point $x\in\mathbb{S}^n$, we choose a local orthonormal frame so that $(\varphi_{ij})$ is diagonal at $x$, then we have at $x$
		\begin{equation}\label{d1}
			A_{ij\alpha}=A_{ji\alpha},
		\end{equation}
		\begin{equation}\label{d2}
			A_{\alpha \beta i}-A_{i \beta \alpha}=\dfrac{\varphi_{\alpha}}{\varphi}A_{\alpha \alpha}\delta_{i\beta}-\dfrac{\varphi_i}{\varphi}A_{ii}\delta_{\alpha \beta},
		\end{equation}
		and
		\begin{equation}\label{d3}
			\begin{aligned}
				&	A_{ii\alpha\alpha}-A_{\alpha\alpha ii}\\
				=&-\dfrac{\sum_mA_{\alpha\alpha m}\varphi_m}{\varphi}-\left[-\dfrac{2\varphi_{\alpha}^2}{\varphi^2}+\dfrac{|D\varphi|^2}{2\varphi^2}+\dfrac{1}{\varphi}A_{\alpha\alpha}+\dfrac{1}{2}\left(1+\dfrac{1}{\varphi^2}\right)\right]A_{\alpha\alpha}\\
				&+\dfrac{\sum_mA_{iim}\varphi_m}{\varphi}+\left[-\dfrac{2\varphi_{i}^2}{\varphi^2}+\dfrac{|D\varphi|^2}{2\varphi^2}+\dfrac{1}{\varphi}A_{ii}+\dfrac{1}{2}\left(1+\dfrac{1}{\varphi^2}\right)\right]A_{ii}.
			\end{aligned}
		\end{equation}
	\end{lemma}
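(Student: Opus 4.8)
The plan is to prove all three identities by direct computation, writing
\[
A_{ij}[\varphi]=\varphi_{ij}+\psi\,\sigma_{ij},\qquad \psi:=\frac12\Big(\varphi-\frac1\varphi\Big)-\frac{|D\varphi|^2}{2\varphi},
\]
and repeatedly invoking the Ricci commutation identities on the unit sphere $\mathbb{S}^n$, whose curvature tensor is $R_{abcd}=\sigma_{ac}\sigma_{bd}-\sigma_{ad}\sigma_{bc}$. The idea is to keep everything tensorial until the last step, where one passes to the frame diagonalizing $(\varphi_{ij})$ at the point $x$ and uses the pointwise relation $\varphi_{mm}=A_{mm}-\psi$ to trade $\varphi_{mm}$ for $A_{mm}$. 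Identity \eqref{d1} is then immediate: $\varphi_{ij}$ is the Hessian of a scalar, hence symmetric, so $A[\varphi]$ is a symmetric $2$-tensor and its covariant derivative $A_{ij\alpha}=\nabla_\alpha A_{ij}$ is symmetric in $i,j$.

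For \eqref{d2}, I would differentiate once to get $A_{ij\alpha}=\varphi_{ij\alpha}+\psi_\alpha\sigma_{ij}$, so that
\[
A_{\alpha\beta i}-A_{i\beta\alpha}=(\varphi_{\alpha\beta i}-\varphi_{i\beta\alpha})+\psi_i\sigma_{\alpha\beta}-\psi_\alpha\sigma_{i\beta}.
\]
The first bracket is evaluated by the third-order Ricci identity on $\mathbb{S}^n$ (commuting the first and last covariant derivatives of $\varphi$), which contributes a linear combination of terms $\sigma_{\cdot\cdot}\varphi_\cdot$. One then computes $\psi_\alpha$ explicitly, using $\nabla_\alpha|D\varphi|^2=2\varphi_k\varphi_{k\alpha}$, which in the diagonalizing frame equals $2\varphi_\alpha\varphi_{\alpha\alpha}$; after substituting $\varphi_{\alpha\alpha}=A_{\alpha\alpha}-\psi$, the terms involving $\psi$ and the curvature-generated $\varphi_\cdot$ terms cancel, leaving precisely $\tfrac{\varphi_\alpha}{\varphi}A_{\alpha\alpha}\delta_{i\beta}-\tfrac{\varphi_i}{\varphi}A_{ii}\delta_{\alpha\beta}$.

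For \eqref{d3} I would differentiate once more: from $A_{ij\alpha\beta}=\varphi_{ij\alpha\beta}+\psi_{\alpha\beta}\sigma_{ij}$ one gets
\[
A_{ii\alpha\alpha}-A_{\alpha\alpha ii}=(\varphi_{ii\alpha\alpha}-\varphi_{\alpha\alpha ii})+(\psi_{\alpha\alpha}-\psi_{ii}).
\]
The fourth-derivative difference is handled by commuting covariant derivatives of $\varphi$ two at a time via the Ricci identity on $\mathbb{S}^n$, each commutation trading a curvature contraction for metric terms and leaving an explicit expression in $\varphi$, $\varphi_m$, $\varphi_{mm}$. The remaining piece needs $\psi_{\alpha\beta}$, for which one uses $\nabla^2|D\varphi|^2=2\varphi_{k\alpha}\varphi_{k\beta}+2\varphi_k\varphi_{k\alpha\beta}$ together with products of lower-order quantities; in the diagonalizing frame the third-derivative terms $\varphi_{k\alpha\beta}$ combine with those produced by the sphere commutator. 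Collecting all contributions and substituting $\varphi_{mm}=A_{mm}-\psi$ throughout, the many cross terms cancel and the surviving $\varphi$-derivative terms reorganize into the $\mp\tfrac1\varphi\sum_m A_{\cdot\cdot m}\varphi_m$ pieces and the $\mp[\cdots]A_{\cdot\cdot}$ brackets of the stated formula. (An equivalent route is to differentiate the tensorial---off-diagonal---version of the first-derivative identity and commute once more, but one must then be careful that \eqref{d2} as stated holds only at $x$.)

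I expect the main obstacle to be the bookkeeping in \eqref{d3}: both $\psi_{\alpha\beta}$ and the fourth-order sphere commutator generate a large number of terms (products such as $\varphi_\alpha^2\varphi_{\alpha\alpha}/\varphi^2$, $\varphi_{\alpha\alpha}^2/\varphi$, mixed third-derivative terms, and so on), and it is only after systematically rewriting everything in the $A_{mm}$ variables that the cancellations yielding the comparatively clean right-hand side become transparent. Everything else---\eqref{d1}, \eqref{d2}, and the identification of the curvature terms---is routine once the sphere's Ricci identities are in hand.
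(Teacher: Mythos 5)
Your proposal is correct and takes essentially the same route as the paper: differentiate the definition of $A[\varphi]$ directly (with the zeroth-order coefficient grouped as $\psi$), apply the Ricci commutation identities on $\mathbb{S}^n$, evaluate in the frame diagonalizing $D^2\varphi$, and convert $\varphi_{\alpha\alpha}$, $\varphi_{\alpha\alpha k}$ back into $A$-quantities via $\varphi_{\alpha\alpha}=A_{\alpha\alpha}-\psi$. The splitting $A_{ii\alpha\alpha}-A_{\alpha\alpha ii}=(\varphi_{ii\alpha\alpha}-\varphi_{\alpha\alpha ii})+(\psi_{\alpha\alpha}-\psi_{ii})$ is only a bookkeeping repackaging of the paper's computation of $A_{ii\alpha\alpha}$, and the outlined cancellations do go through exactly as claimed.
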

	\begin{proof}
		Equation \eqref{d1} follows directly from the symmetry of the tensor $A[\varphi]$. To show equation \eqref{d2}, we take the derivatives to \eqref{s1.Aij} and obtain
  \begin{align*}
      A_{\alpha\beta i}=&
			\varphi_{\alpha \beta i}-\Big(\frac{\sum_{l}\varphi_l \varphi_{li}}{\varphi}-\frac{1}{2}\frac{|D\varphi|^2\varphi_i}{\varphi^2}\Big)\delta_{\alpha \beta}
			+\frac{1}{2}\Big(\varphi_i+\frac{\varphi_i}{\varphi^2}\Big)\delta_{\alpha \beta},\\
   A_{i\beta \alpha}=&
			\varphi_{i \beta \alpha}-\Big(\frac{\sum_{l}\varphi_l \varphi_{l\alpha}}{\varphi}-\frac{1}{2}\frac{|D\varphi|^2\varphi_\alpha}{\varphi^2}\Big)\delta_{i \beta}
			+\frac{1}{2}\Big(\varphi_\alpha+\frac{\varphi_\alpha}{\varphi^2}\Big)\delta_{i \beta}.
  \end{align*}
Due to the Ricci identity, we have the commutation formula on $\mathbb{S}^n$
		\begin{align}\label{s3.dalpha}
			\varphi_{\alpha\beta i}-\varphi_{i\beta\alpha}=&\varphi_l\bar{R}_{l\beta\alpha i}
   = \varphi_l\left(\delta_{l\alpha}\delta_{\beta i}-\delta_{li}\delta_{\beta\alpha}\right)\nonumber\\
   =&\delta_{i\beta}\varphi_{\alpha}-\delta_{\alpha\beta}\varphi_i,
		\end{align}
		where $\bar{R}_{l\beta\alpha i}$ denotes the curvature tensor of the round metric on $\mathbb{S}^n$. Then we have
		\begin{align*}
			A_{\alpha\beta i}-A_{i\beta\alpha}=&-\Big(\frac{\sum_{l}\varphi_l \varphi_{li}}{\varphi}-\frac{1}{2}\frac{|D\varphi|^2\varphi_i}{\varphi^2}\Big)\delta_{\alpha \beta}
			-\frac{1}{2}\Big(\varphi_i-\frac{\varphi_i}{\varphi^2}\Big)\delta_{\alpha \beta}\\
   &+\Big(\frac{\sum_{l}\varphi_l \varphi_{l\alpha}}{\varphi}-\frac{1}{2}\frac{|D\varphi|^2\varphi_\alpha}{\varphi^2}\Big)\delta_{i \beta}
			+\frac{1}{2}\Big(\varphi_\alpha-\frac{\varphi_\alpha}{\varphi^2}\Big)\delta_{i \beta}\\
   =&\frac{\varphi_{\alpha}}{\varphi}A_{\alpha\alpha}\delta_{i \beta}-\frac{\varphi_i}{\varphi}A_{ii}\delta_{\alpha\beta},
		\end{align*}
  where we used that $(\varphi_{ij})$ is diagonal at $x$. This is the equation \eqref{d2}.

  Equation \eqref{d3} was derived in \cite[Lemma 4.2]{Chen}. We include the proof for convenience of readers.
		\begin{align*}
			A_{ii\alpha\alpha}=&\varphi_{ii\alpha\alpha}
			-\frac{\sum_{k}(\varphi^{2}_{k\alpha}+\varphi_{k\alpha\alpha}\varphi_k)}{\varphi}
			+\frac{2\sum_{k}\varphi_{k\alpha}\varphi_k\varphi_{\alpha}}{\varphi^2}\\
    &+\frac{1}{2}|D\varphi|^2
			\frac{\varphi\varphi_{\alpha\alpha}-2\varphi_{\alpha}^{2}}{\varphi^3}+\frac{1}{2}\Big(\varphi_{\alpha\alpha}+\frac{\varphi_{\alpha\alpha}}{\varphi^2}-
			\frac{2\varphi_{\alpha}^{2}}{\varphi^3}\Big)\\
		=&\varphi_{ii\alpha\alpha}
			-\frac{\sum_{k}(\varphi^{2}_{k\alpha}+\varphi_{\alpha\alpha k}\varphi_k)}{\varphi}
			+\frac{\varphi_{\alpha}^{2}-|D \varphi|^2}{\varphi}
			+\frac{2\sum_{k}\varphi_{k\alpha}\varphi_k\varphi_{\alpha}}{\varphi^2}\\
   &+\frac{1}{2}|D\varphi|^2
			\frac{\varphi\varphi_{\alpha\alpha}-2\varphi_{\alpha}^{2}}{\varphi^3}
			+\frac{1}{2}\Big(\varphi_{\alpha\alpha}+\frac{\varphi_{\alpha\alpha}}{\varphi^2}-
			\frac{2\varphi_{\alpha}^{2}}{\varphi^3}\Big)\\
   =&\varphi_{ii\alpha\alpha}-\frac{\sum_{k}\varphi^{2}_{k\alpha}}{\varphi}+\frac{1}{2}|D\varphi|^2\frac{\varphi_{\alpha\alpha}}{\varphi^2} -\frac{\sum_k\varphi_{\alpha\alpha k}\varphi_k}{\varphi}\\
   &+\frac{2\sum_{k}\varphi_{k\alpha}\varphi_k\varphi_{\alpha}}{\varphi^2}-
			\frac{|D\varphi|^2\varphi_{\alpha}^{2}}{\varphi^3}+\varphi_{\alpha}^{2}\Big(\frac{1}{\varphi}-\frac{1}{\varphi^3}\Big)\\
   &-\frac{|D \varphi|^2}{\varphi}+\frac{1}{2}\Big(\varphi_{\alpha\alpha}+\frac{\varphi_{\alpha\alpha}}{\varphi^2}\Big).
		\end{align*}
		Using the definition of $A$ and the fact the $\varphi_{ij}$ is diagonal at $x$, we rewrite some terms on the right hand side of the above equation as follows:
		\begin{align*}
			&\frac{2\sum_{k}\varphi_{k\alpha}\varphi_k\varphi_{\alpha}}{\varphi^2}-
			\frac{|D\varphi|^2\varphi_{\alpha}^{2}}{\varphi^3}+\varphi_{\alpha}^{2}\Big(\frac{1}{\varphi}-\frac{1}{\varphi^3}\Big)
			=\frac{2\varphi_{\alpha}^{2}A_{\alpha\alpha}}{\varphi^2},\\
			&-\frac{\sum_{k}\varphi^{2}_{k\alpha}}{\varphi}+\frac{1}{2}|D\varphi|^2
			\frac{\varphi_{\alpha\alpha}}{\varphi^2}
			=-\frac{\varphi_{\alpha\alpha}}{\varphi}A_{\alpha\alpha}+\frac{1}{2}\varphi_{\alpha\alpha}\Big(1-\frac{1}{\varphi^2}\Big).
		\end{align*}
		Thus,
		\begin{eqnarray*}
			A_{ii\alpha\alpha}&=&\varphi_{ii\alpha\alpha}
			-\frac{\sum_{k}\varphi_{\alpha\alpha k}\varphi_k}{\varphi}
			-\frac{|D \varphi|^2}{\varphi}
			+\Big(\frac{2\varphi_{\alpha}^{2}}{\varphi^2}-\frac{\varphi_{\alpha\alpha}}{\varphi}\Big)A_{\alpha\alpha}
			+\varphi_{\alpha\alpha}.
		\end{eqnarray*}
  Interchanging the indices $i$ and $\alpha$, we have
  \begin{align}\label{s3.d2A}
      &A_{ii\alpha\alpha}-A_{\alpha\alpha ii}\nonumber\\
      =&\varphi_{ii\alpha\alpha}-\varphi_{\alpha\alpha ii}-\sum_{k}\frac{\varphi_k}{\varphi}\left(\varphi_{\alpha\alpha k}-\varphi_{iik}\right)+\Big(\frac{2\varphi_{\alpha}^{2}}{\varphi^2}-\frac{\varphi_{\alpha\alpha}}{\varphi}\Big)A_{\alpha\alpha}\nonumber\\
      &-\Big(\frac{2\varphi_{i}^{2}}{\varphi^2}-\frac{\varphi_{ii}}{\varphi}\Big)A_{ii}+\varphi_{\alpha\alpha}-\varphi_{ii}.
  \end{align}

		Taking derivatives to \eqref{s3.dalpha}, we have
  \begin{align*}
      \varphi_{\alpha\alpha ii}=&\varphi_{i\alpha\alpha i}+\delta_{i\alpha}\varphi_{\alpha i}-\delta_{\alpha\alpha}\varphi_{ii},\\
      \varphi_{ii\alpha\alpha}=&\varphi_{\alpha ii\alpha }+\delta_{\alpha i}\varphi_{i\alpha}-\delta_{ii}\varphi_{\alpha\alpha}.
  \end{align*}
  This implies that
  \begin{align}\label{s3.d2varphi}
      \varphi_{ii\alpha\alpha}-\varphi_{\alpha\alpha ii}=&\varphi_{\alpha ii\alpha }-\varphi_{i\alpha \alpha i}+\delta_{\alpha\alpha}\varphi_{ii}-\delta_{ii}\varphi_{\alpha\alpha}\nonumber\\
      =&\bar{R}_{\alpha iim}\varphi_{m\alpha}+\bar{R}_{\alpha i\alpha m}\varphi_{mi}+\delta_{\alpha\alpha}\varphi_{ii}-\delta_{ii}\varphi_{\alpha\alpha}\nonumber\\
      =&\left(\delta_{\alpha i}\delta_{im}-\delta_{\alpha m}\delta_{ii}\right)\varphi_{m\alpha}+\left(\delta_{\alpha\alpha}\delta_{im}-\delta_{\alpha m}\delta_{i\alpha}\right)\varphi_{mi}\nonumber\\
      &\quad +\delta_{\alpha\alpha}\varphi_{ii}-\delta_{ii}\varphi_{\alpha\alpha}\nonumber\\
      =&2\varphi_{ii}-2\varphi_{\alpha\alpha}.
  \end{align}
		Substituting \eqref{s3.d2varphi} into \eqref{s3.d2A} and noting the fact that
		\begin{eqnarray*}
			\varphi_{\alpha\alpha k}-\varphi_{ii k}=A_{\alpha\alpha k}-A_{ii k}, \qquad
			\varphi_{\alpha\alpha}-\varphi_{ii}=A_{\alpha\alpha}-A_{ii},
		\end{eqnarray*}
		we obtain the equation \eqref{d3}.
	\end{proof}
	
	In the case of $k=1$, equation \eqref{equa} is a quasi-linear elliptic equation on $\mathbb{S}^n$. The $C^2$ estimate follows from the $C^1$ estimate by the standard elliptic theory. In the following, we focus on the case $2\le k\le n-1$.
	\begin{proposition}\label{lc2}
		Let $f$ be a smooth, even and positive function satisfying Assumption \ref{assum}.
        There is a constant $C>0$ depending only on $n, p, k$, $\Vert f \Vert _{C^2(\mathbb{S}^n)}$ and $\min_{\mathbb{S}^n}f$, such that if $\varphi>1$ is an even,  $h$-convex solution of equation \eqref{equa}, then $\Vert \varphi \Vert _{C^2(\mathbb{S}^n)} \le C$.
	\end{proposition}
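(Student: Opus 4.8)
The plan is to establish the one-sided bound $\lambda_{\max}(A[\varphi])\le C$ by the maximum principle; the opposite bound is automatic, since $A[\varphi]\ge0$ together with the $C^0$ and $C^1$ estimates of Lemmas \ref{lc0} and \ref{lc1} and the formula \eqref{s1.Aij} forces $\varphi_{ij}\ge -Cg_{\mathbb{S}^n}$, so a bound on $\lambda_{\max}(A[\varphi])$ already yields $\|\varphi\|_{C^2(\mathbb{S}^n)}\le C$. As the case $k=1$ is quasilinear and follows from the $C^1$ bound, we take $2\le k\le n-1$. Write \eqref{equa} as $\sigma_k(A[\varphi])=\varphi^{p-k}f=:\phi$ and put $F^{ij}=\sigma_k^{ij}(A[\varphi])$, $L=F^{ij}\nabla_i\nabla_j$. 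Since $\varphi$ is $h$-convex and $\sigma_k(A[\varphi])=\phi>0$, the tensor $A[\varphi]$ has at least $k$ positive eigenvalues and hence lies in $\Gamma_k$; so $F^{ij}>0$, $L$ is elliptic, $\phi$ is bounded between two positive constants (by the $C^0$ bound on $\varphi$ and the bounds on $f$), and by the Newton--MacLaurin inequality $\sum_iF^{ii}=(n-k+1)\sigma_{k-1}(A[\varphi])\ge c_0>0$.

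First I would apply the maximum principle to $W:=\log\lambda_{\max}(A[\varphi])$. Let $x_0$ be a maximum point of $W$, and choose a local orthonormal frame in which $\varphi_{ij}$, hence $A_{ij}[\varphi]$, is diagonal at $x_0$, with $e_1$ the eigendirection of $\lambda_1:=\lambda_{\max}$; we may assume $\lambda_1$ is large, otherwise we are done. Dealing with the possible multiplicity of $\lambda_1$ by the standard device of replacing $\lambda_{\max}$ near $x_0$ by $A_{ij}[\varphi]\xi^i\xi^j$ for a unit vector field $\xi$ frozen at the maximizing eigenvector (so the first and second covariant derivatives at $x_0$ reduce to those of $A_{11}$), the first- and second-order conditions at $x_0$ become
\begin{equation*}
A_{11;e}=0\quad(\forall\,e),\qquad F^{ee}A_{11;ee}\le0 .
\end{equation*}
Differentiating \eqref{equa} twice in the $e_1$-direction gives $F^{ee}A_{ee;11}=\phi_{;11}-F^{ij,rs}A_{ij;1}A_{rs;1}$, and I would commute the fourth derivatives using Lemma \ref{d}: the symmetry \eqref{d1} and, crucially, the identity \eqref{d3} with $i=1$ to expand $A_{11;ee}-A_{ee;11}$ (equation \eqref{d2} enters the analogous commutations of lower order terms). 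Contracting \eqref{d3} against $F^{\alpha\alpha}$ and using $\sum_\alpha F^{\alpha\alpha}A_{\alpha\alpha;m}=\nabla_m\sigma_k(A[\varphi])=\phi_{;m}$ and $\sum_\alpha F^{\alpha\alpha}\lambda_\alpha=k\sigma_k$, together with the $C^0$, $C^1$ bounds and the pinching $|D\varphi|^2\le\varphi^2-1$ from \eqref{c1}, every resulting term is $O(\lambda_1)+O(1)$ (the term $\sum_mA_{11;m}\varphi_m/\varphi$ even vanishes, as $A_{11;m}=0$ at $x_0$), except the term coming from the bracket multiplying $A_{ii}$ in \eqref{d3} with $i=1$, which equals $\bigl(\tfrac{\lambda_1}{\varphi}+O(1)\bigr)\lambda_1\sum_\alpha F^{\alpha\alpha}\ge c_1\lambda_1^2$ for $\lambda_1$ large. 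Moreover $\phi_{;11}=(p-k)\varphi^{p-k-1}f\,\varphi_{11}+O(1)=O(\lambda_1)$ since $\varphi_{11}=\lambda_1+O(1)$ by \eqref{s1.Aij}, and, by the concavity of $\sigma_k^{1/k}$ (equivalently of $\log\sigma_k$) on $\Gamma_k$ recalled in Section \ref{sec2}, $-F^{ij,rs}A_{ij;1}A_{rs;1}\ge-(\phi_{;1})^2/\sigma_k\ge-C$ because $F^{ij}A_{ij;1}=\phi_{;1}=O(1)$. Collecting these estimates gives
\begin{equation*}
0\ \ge\ F^{ee}A_{11;ee}\ \ge\ c_1\lambda_1^2-C\lambda_1-C ,
\end{equation*}
which is impossible once $\lambda_1$ exceeds a constant depending only on $n$, $p$, $k$, $\|f\|_{C^2(\mathbb{S}^n)}$ and $\min_{\mathbb{S}^n}f$. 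Hence $\lambda_{\max}(A[\varphi])\le C$, completing the $C^2$ estimate.

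The hard part is the bookkeeping forced by the non-Codazzi tensor $A[\varphi]$: one has to verify that \emph{every} extra term generated by \eqref{d3} (and by \eqref{d2}), after being contracted with $F^{\alpha\alpha}$, is either genuinely of lower order in $\lambda_1$ or carries the favorable sign — in particular that the cubic-in-derivative pieces are harmless, which is exactly where $\sum_\alpha F^{\alpha\alpha}A_{\alpha\alpha;m}=\phi_{;m}$, the symmetric-function identities and inequalities of Lemmas \ref{s2.lem1}--\ref{sig}, the pinching \eqref{c1}, and the previously established $C^0$ and $C^1$ bounds are all used. A minor technical point is the multiplicity of $\lambda_{\max}$, handled by the frozen-eigenvector reduction. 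Once $\|\varphi\|_{C^2(\mathbb{S}^n)}\le C$, equation \eqref{equa} is uniformly elliptic and concave in $D^2\varphi$, so the Evans--Krylov theorem and Schauder estimates bootstrap this to bounds of all orders, as noted at the beginning of the section.
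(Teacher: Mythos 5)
Your proof is correct and follows essentially the same maximum-principle strategy as the paper: both hinge on the commutation identity \eqref{d3}, the concavity of $\sigma_k^{1/k}$, the favorable quadratic term $\tfrac{1}{\varphi}(\cdot)^2\sum_i F^{ii}$, and a positive lower bound for $\sum_i F^{ii}$. The only cosmetic difference is the test quantity: the paper maximizes the trace $H=\mathrm{tr}(A[\varphi])$ (thereby avoiding the eigenvalue-multiplicity device) and applies the Laplacian to the equation, whereas you maximize $\lambda_{\max}(A[\varphi])$ with the frozen-eigenvector reduction and differentiate the equation only in the maximizing direction.
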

	\begin{proof}
		Since $A[\varphi]$ is positive semi-definite and we have $C^0$ and $C^1$ bounds on $\varphi$, the terms $|\varphi_{ij}|$ are controlled by
  \begin{equation*}
      H:= \text{trace}(A[\varphi])=\Delta \varphi-\frac{n}{2\varphi}|D\varphi|^2+\frac{n}{2}(\varphi-\frac{1}{\varphi}).
  \end{equation*}
  Therefore, it suffices to estimate the upper bound of $H$. Assume $H$ achieves its maximum at $x_0 \in \mathbb{S}^n$, we choose a  local orthonormal frame $e_1, e_2,...,e_n$ near $x_0$ such that $\varphi_{ij}(x_0)$ is diagonal. Denote $A=A[\varphi]$ and  $S(A)=\left(\frac{\sigma_{k}}{C_n^k}\right)^{\frac{1}{k}}(A)$. Then equation \eqref{equa} transforms to
		\begin{equation}\label{equa1}
			S(A)=\varphi^{a-1}\tilde{f},
		\end{equation}
		where $\tilde{f}=\left(\frac{f}{C^k_n}\right)^{\frac{1}{k}}$ and $a=\frac{p}{k}$.

		Taking twice derivatives of $H$ and using  \eqref{d3} of Lemma \ref{d}, we obtain
		\begin{equation}\label{hii}
			\begin{aligned}
				H_{ii}&=\sum\limits_{\alpha}A_{\alpha\alpha ii}\\
				&=\sum\limits_{\alpha} \biggl\{   A_{ii\alpha\alpha}  +\dfrac{\sum_kA_{\alpha\alpha k}\varphi_k}{\varphi}+\left[-\dfrac{2\varphi_{\alpha}^2}{\varphi^2}+\dfrac{|D\varphi|^2}{2\varphi^2}+\dfrac{1}{\varphi}A_{\alpha\alpha}+\dfrac{1}{2}\left(1+\dfrac{1}{\varphi^2}\right)\right]A_{\alpha\alpha}    \\
				&\qquad\qquad        -\dfrac{\sum_k A_{iik}\varphi_k}{\varphi}-\left[-\dfrac{2\varphi_{i}^2}{\varphi^2}+\dfrac{|D\varphi|^2}{2\varphi^2}+\dfrac{1}{\varphi}A_{ii}+\dfrac{1}{2}\left(1+\dfrac{1}{\varphi^2}\right)\right]A_{ii}
				\biggr\}\\
				&=\Delta A_{ii}+\dfrac{\sum\limits_{\alpha,k}A_{\alpha\alpha k}\varphi_k}{\varphi}-\dfrac{n\sum\limits_kA_{iik}\varphi_k}{\varphi} \\
				& \ \ \  -\sum\limits_{\alpha}\dfrac{2\varphi_{\alpha}^2}{\varphi^2}A_{\alpha\alpha}+\dfrac{1}{\varphi}\sum\limits_{\alpha}A_{\alpha\alpha}^2+\left[\dfrac{|D\varphi|^2}{2\varphi^2}+\dfrac{1}{2}\left(1+\dfrac{1}{\varphi^2}\right)\right]H \\
				& \ \ \   +\dfrac{2n\varphi_{i}^2}{\varphi^2}A_{ii}-\dfrac{n}{\varphi}A_{ii}^2-n\left[\dfrac{|D\varphi|^2}{2\varphi^2}+\dfrac{1}{2}\left(1+\dfrac{1}{\varphi^2}\right)\right]A_{ii}.
			\end{aligned}
		\end{equation}
     Since $f>0$ and $A[\varphi]\geq 0$, we know that $\varphi$ is $k$-admissible and so $(S^{ij})>0$ which is diagonal at $x_0$. We also have $(H_{ij})\le0$ at $x_0$, by the above commutator identity \eqref{hii}, it follows that at $x_0$,
		\begin{equation}\label{ineq}
			\begin{aligned}
				0\ge S^{ij}H_{ij}&=S^{ii}H_{ii}\\
				&=S^{ii}\Delta A_{ii}-\dfrac{n\sum\limits_kS_k\varphi_k}{\varphi}\\
				&\ \ +\left\{  -\sum\limits_{\alpha}\dfrac{2\varphi_{\alpha}^2}{\varphi^2}A_{\alpha\alpha}+\dfrac{1}{\varphi}\sum\limits_{\alpha}A_{\alpha\alpha}^2+\left[\dfrac{|D\varphi|^2}{2\varphi^2}+\dfrac{1}{2}\left(1+\dfrac{1}{\varphi^2}\right)\right]H  \right\}\sum\limits_iS^{ii}\\
				&\ \  +\sum\limits_i\dfrac{2n\varphi_{i}^2}{\varphi^2}S^{ii}A_{ii}-\dfrac{n}{\varphi}\sum\limits_iS^{ii}A_{ii}^2-n\left[\dfrac{|D\varphi|^2}{2\varphi^2}+\dfrac{1}{2}\left(1+\dfrac{1}{\varphi^2}\right)\right]S,
			\end{aligned}
		\end{equation}
		where we used $H_k=\sum_{\alpha}A_{\alpha\alpha k}=0$, $S_k=S^{ii}A_{iik}$ and $S=S^{ii}A_{ii}$ at $x_0$.  Moreover, we know that
  \begin{equation*}
      \sum\limits_{\alpha}A_{\alpha\alpha}^2\ge \dfrac{H^2}{n},\quad \sum\limits_{\alpha}\dfrac{2\varphi_{\alpha}^2}{\varphi^2}A_{\alpha\alpha}\le\dfrac{2|D\varphi|^2}{\varphi^2}H,\quad \text{and}\quad \sum\limits_iS^{ii}A_{ii}^2\le SH.
  \end{equation*}
  Substituting these inequalities into \eqref{ineq} implies
  \begin{align}\label{ineq2}
				0\ge& S^{ii}\Delta A_{ii}-\frac{n}{\varphi}\sum\limits_kS_k\varphi_k\nonumber\\
				&\ \ +\left\{ \frac{H^2}{n\varphi} +\left[-\dfrac{3|D\varphi|^2}{2\varphi^2}+\dfrac{1}{2}\left(1+\dfrac{1}{\varphi^2}\right)\right]H  \right\}\sum\limits_iS^{ii}\nonumber\\
				&\ \  -\dfrac{n}{\varphi}SH-n\left[\dfrac{|D\varphi|^2}{2\varphi^2}+\dfrac{1}{2}\left(1+\dfrac{1}{\varphi^2}\right)\right]S\nonumber\\
    \geq & S^{ii}\Delta A_{ii}-\frac{n}{\varphi}\sum\limits_kS_k\varphi_k+\left\{ \frac{H^2}{n\varphi} +\left(-1+\dfrac{1}{2\varphi^2}\right)H  \right\}\sum\limits_iS^{ii}\nonumber\\
				&\ \  -\dfrac{n}{\varphi}SH-n\left(1+\frac{1}{2\varphi^2}\right)S
		\end{align}
holds at $x_0$,  where we used \eqref{c1} in the last inequality.

 To estimate the first term on the right hand side of \eqref{ineq2}, we apply the Laplace operator to equation \eqref{equa1} to obtain
		\begin{align*}
			&S^{ij}A_{ijk}=(\varphi^{a-1}\tilde{f})_k,\\
           &S^{ij,rs}A_{ijk}A_{rsk}+S^{ij}\Delta A_{ij}=\Delta (\varphi^{a-1}\tilde{f}).
		\end{align*}
		The concavity of $S$ implies that
		\begin{equation}\label{ineq1}
			S^{ij}\Delta A_{ij}\ge \Delta (\varphi^{a-1}\tilde{f})
		\end{equation}
  holds at $x_0$.		Direct computation gives
		\begin{equation}\label{lap}
			\begin{aligned}
				\Delta (\varphi^{a-1}\tilde{f})=&(a-1)\varphi^{a-2}\tilde{f}\Delta\varphi+(a-1)(a-2)\varphi^{a-3}\tilde{f}|D\varphi|^2\\
    &\quad +\varphi^{a-1}\Delta\tilde{f}+2\langle D\varphi^{a-1},D\tilde{f}\rangle \\
				=&(a-1)\varphi^{a-2}\tilde{f}H+(a-1)(a-2+\frac{n}{2})\varphi^{a-3}\tilde{f}|D\varphi|^2\\
				&\quad-\dfrac{n}{2}(a-1)(\varphi-\dfrac{1}{\varphi})\varphi^{a-2}\tilde{f}+\varphi^{a-1}\Delta\tilde{f}+2\langle D\varphi^{a-1},D\tilde{f}\rangle .
				%	(\dfrac{n}{2\varphi}|D\varphi|^2-\dfrac{n}{2}(\varphi-\dfrac{1}{\varphi}))+2\varphi^{-3}\tilde{f}|D\varphi|^2+\varphi^{-1}\Delta(\tilde{f})+2\langle D\varphi^{-1},D\tilde{f}\rangle .
			\end{aligned}			
		\end{equation}
  Substituting \eqref{ineq1}, \eqref{lap} into \eqref{ineq2}, and using $C^0,C^1$ estimates, we obtain
		\begin{equation}
			\nonumber
			0\ge\left(a_1H^2-C_1H\right)\sum\limits_iS^{ii}-C_2H-C_3,
		\end{equation}
		for some positive constants $a_1, C_0, C_1, C_2$ depending on $n, p, k$, $\Vert f \Vert _{C^2(\mathbb{S}^n)}$ and $\min_{\mathbb{S}^n}f$. 		Since
  \begin{equation*}
      \sum_i S^{ii}\ge 1
  \end{equation*}
  (see \cite[Lemma 4]{AMZ13}), if $a_1H(x_0)\ge C_1$, we have
		\begin{equation*}
			0\ge a_1H^2-(C_1+C_2)H-C_3.
		\end{equation*}
	Hence
 \begin{equation*}
     H(x_0)\le\dfrac{C_1+C_2+1}{a_1}+C_3,
 \end{equation*}
 which completes the proof.
	\end{proof}

	By the Evans-Krylov theorem and Schauder theory ( see \cite{GT}), together with Proposition \ref{lc2}, we have the following a priori estimates.
	\begin{theorem}\label{lcl}
		For each integer $l\ge0$ and $0<\alpha <1$, there exists a constant $C_l$ depending only on $n, p, k, l$, $\alpha$, $\Vert f \Vert _{C^l(\mathbb{S}^n)}$ and $\min_{\mathbb{S}^n}f$ such that
		\begin{equation}\label{ek}
			\Vert \varphi \Vert _{C^{l,\alpha}(\mathbb{S}^n)}\le C_l,
		\end{equation}
		for any even, $h$-convex solution $\varphi$ of \eqref{equa}.
	\end{theorem}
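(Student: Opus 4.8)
The plan is to combine the $C^2$ estimate of Proposition \ref{lc2} with standard elliptic regularity theory. First I would observe that once Proposition \ref{lc2} provides a uniform bound $\Vert\varphi\Vert_{C^2(\mathbb{S}^n)}\le C$, the tensor $A[\varphi]$ defined in \eqref{s1.Aij} has coefficients bounded in $C^0$, and since $A[\varphi]\ge 0$ together with the equation $\sigma_k(A[\varphi])=\varphi^{p-k}f$ forces $\sigma_k(A[\varphi])$ to be bounded below by a positive constant (using $\varphi>1$ bounded and $f\ge\min_{\mathbb{S}^n}f>0$), the $k$-admissible matrix $A[\varphi]$ has eigenvalues in a fixed compact subset of $\overline{\Gamma_k}$ away from $\partial\Gamma_k$ in the relevant sense. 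This makes the operator $\varphi\mapsto\sigma_k(A[\varphi])-\varphi^{p-k}f$ uniformly elliptic on the set of solutions, with ellipticity constants depending only on $n,p,k,\Vert f\Vert_{C^0}$ and $\min_{\mathbb{S}^n}f$.

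Next I would invoke the Evans--Krylov theorem: since $F(r)=\sigma_k^{1/k}(r)$ is concave on $\Gamma_k$ (as recorded before Lemma \ref{sig}), the equation rewritten in the form \eqref{equa1}, namely $S(A[\varphi])=\varphi^{a-1}\tilde f$ with $S=(\sigma_k/C_n^k)^{1/k}$, is a concave fully nonlinear uniformly elliptic equation in the Hessian $D^2\varphi$. The Evans--Krylov estimate then upgrades the $C^2$ bound to a $C^{2,\alpha}$ bound for some $\alpha\in(0,1)$ and some constant depending only on the listed quantities, using that the right-hand side $\varphi^{a-1}\tilde f$ is bounded in $C^\alpha$ once $\varphi$ is bounded in $C^1$ and $f$ in $C^0$ (indeed in $C^2$).

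Finally I would bootstrap via Schauder theory. Once $\varphi\in C^{2,\alpha}$, the linearized equation has Hölder-continuous coefficients, so differentiating the equation and applying the Schauder interior estimates on $\mathbb{S}^n$ (there is no boundary) yields $\varphi\in C^{3,\alpha}$ with a bound in terms of $\Vert f\Vert_{C^1}$; iterating this argument $l$ times and feeding in $\Vert f\Vert_{C^l}$ at each stage gives \eqref{ek} for every $l$ and $\alpha$, with $C_l$ depending only on $n,p,k,l,\alpha,\Vert f\Vert_{C^l(\mathbb{S}^n)}$ and $\min_{\mathbb{S}^n}f$. I expect no genuine obstacle here: the only point requiring a moment of care is verifying that the ellipticity and the distance of $\lambda(A[\varphi])$ from $\partial\Gamma_k$ are controlled uniformly over the family of solutions, which follows from Proposition \ref{lc2}, the $C^0$ bound $\varphi>1$, and the lower bound on $\sigma_k(A[\varphi])$ coming directly from the equation; everything else is a direct citation of \cite{GT} and the Evans--Krylov theorem.
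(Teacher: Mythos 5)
Your proposal is correct and follows essentially the same route as the paper: the authors likewise deduce Theorem \ref{lcl} directly from the $C^2$ bound of Proposition \ref{lc2} combined with the Evans--Krylov theorem (using the concavity of $\sigma_k^{1/k}$ and the uniform ellipticity coming from the $C^0$--$C^2$ bounds and the positive lower bound on $\sigma_k(A[\varphi])$ forced by the equation) and a Schauder bootstrap, citing \cite{GT}. Your write-up merely spells out the uniform-ellipticity verification that the paper leaves implicit.
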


	\section{Full rank theorem}\label{sec4}
	In this section, we will prove the full rank theorem. The approach is based on the idea that the minimal eigenvalue of $A[\varphi]$  satisfies a linear differential inequality in a viscosity sense, which in turn allows to apply the strong maximum principle. An essential component of this method is a differential inequality in a viscosity sense satisfied by the smallest eigenvalue of a symmetric $2$-tensor which was established by Brendle-Choi-Daskalopoulos in \cite{BCD17}.
	\begin{lemma}\label{BK}\cite[Lemma 5]{BCD17}
 Let $T$ be a symmetric $2$-tensor and denote by $\lambda_1\le...\le\lambda_n$ the eigenvalues of $T$. Let $D\ge1$ denote the multiplicity of the smallest eigenvalue of $T$ at $x_0$, so that $\lambda_1=\cdots=\lambda_D<\lambda_{D+1}\le \cdots\le\lambda_n$. Suppose that $\psi$ is a lower support for $\lambda_1$ at $x_0$. That is, $\psi$ is a smooth function such that in an open neighborhood of $x_0$,
 \begin{equation*}
     \psi\leq \lambda_1
 \end{equation*}
and $\psi(x_0)=\lambda_1(x_0)$. Choose an orthonormal frame  such that $T$ is diagonal at $x_0$. Then, at $x_0$
		\begin{equation}\label{BK1}
			T_{ij;k}=\delta_{ij}\psi_{;k} \quad 1\le i,j\le D,
		\end{equation}
		\begin{equation}
			\psi_{;ii}\le T_{11;ii}-2\sum_{j>D}\dfrac{(T_{1j;i})^2}{\lambda_j-\lambda_1}.
		\end{equation}
	\end{lemma}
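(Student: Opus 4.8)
The plan is to prove both assertions by restricting $\psi$ and a suitable Rayleigh quotient of $T$ to a geodesic through $x_0$, transporting the relevant test vectors by parallel translation along it, and using that $x_0$ is an interior minimum of the difference. For a unit vector $v$ write $q_v(x)=T(v,v)(x)$, so that $\lambda_1(x)=\min_{|v|=1}q_v(x)$; hence $q_v(x)\ge\lambda_1(x)\ge\psi(x)$ near $x_0$, with equality at $x_0$ whenever $v$ is a $\lambda_1$-eigenvector of $T(x_0)$. The first covariant derivatives at $x_0$ equal ordinary derivatives along the geodesic, and second derivatives of a scalar along a geodesic give its Hessian, so $\psi_{;kk}(x_0)$ is literally $(\psi\circ\gamma)''(0)$ when $\gamma$ is a unit-speed geodesic with $\gamma'(0)=e_k$.

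First I would prove \eqref{BK1}. Fix $k$ and let $\gamma$ be the geodesic with $\gamma(0)=x_0$, $\gamma'(0)=e_k$; for a unit vector $v\in\mathrm{span}(e_1,\dots,e_D)$ let $v(t)$ be its parallel transport along $\gamma$ and set $h(t)=q_{v(t)}(\gamma(t))$. Then $h(t)\ge\lambda_1(\gamma(t))\ge\psi(\gamma(t))$ with $h(0)=\psi(\gamma(0))$, so $t=0$ is a minimum of $h-\psi\circ\gamma$ and $h'(0)=(\psi\circ\gamma)'(0)=\psi_{;k}(x_0)$; since $\nabla_{\gamma'}v\equiv0$ we also have $h'(0)=(\nabla_{e_k}T)(v,v)(x_0)$. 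Thus $\sum_{a,b\le D}v^av^bT_{ab;k}(x_0)=\psi_{;k}(x_0)$ for every unit $v$ in a $D$-dimensional subspace, so the quadratic form $v\mapsto T_{ab;k}(x_0)v^av^b$ on that subspace equals $\psi_{;k}(x_0)|v|^2$, i.e.\ $T_{ab;k}(x_0)=\psi_{;k}(x_0)\delta_{ab}$ for $1\le a,b\le D$, which is \eqref{BK1}.

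For the second inequality, fix $i$ and let $\gamma$ be the geodesic with $\gamma'(0)=e_i$; let $e_a(t)$ be the parallel transports of the frame vectors, which preserve inner products. Given parameters $b_{ji}\in\mathbb{R}$ for $j>D$, choose smooth $c_j$ with $c_j(0)=0$, $c_j'(0)=b_{ji}$, and set
\[
 \xi(t)=e_1(t)+\sum_{j>D}c_j(t)e_j(t),\qquad \Phi(t)=\frac{T(\xi(t),\xi(t))(\gamma(t))}{|\xi(t)|^2}=\frac{T(\xi(t),\xi(t))(\gamma(t))}{1+\sum_{j>D}c_j(t)^2}.
\]
Then $\Phi(t)\ge\lambda_1(\gamma(t))\ge\psi(\gamma(t))$ with equality at $t=0$, so $(\Phi-\psi\circ\gamma)''(0)\ge0$. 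Differentiating twice along $\gamma$, using $\nabla_{\gamma'}e_a(t)\equiv0$, the fact that $T$ is diagonal at $x_0$ (so the terms carrying $c_j''(0)$ drop, because $T(e_j,e_1)(x_0)=0$ for $j>D$), $(\nabla_{e_i}T)(e_j,e_1)(x_0)=T_{1j;i}(x_0)$, $T(e_j,e_k)(x_0)=\lambda_j\delta_{jk}$, and $\tfrac{d^2}{dt^2}\bigl(T(e_1,e_1)(\gamma(t))\bigr)\big|_{t=0}=T_{11;ii}(x_0)$, together with the contribution $-2\lambda_1\sum_{j>D}b_{ji}^2$ coming from the normalization $|\xi(t)|^2$, one finds
\[
 \psi_{;ii}(x_0)\ \le\ \Phi''(0)\ =\ T_{11;ii}(x_0)+4\sum_{j>D}b_{ji}T_{1j;i}(x_0)+2\sum_{j>D}(\lambda_j-\lambda_1)b_{ji}^2.
\]
Since $\lambda_j-\lambda_1>0$ for $j>D$, the right side is a sum of convex quadratics in the free parameters $b_{ji}$, minimized at $b_{ji}=-T_{1j;i}(x_0)/(\lambda_j-\lambda_1)$; substituting this choice gives exactly $\psi_{;ii}(x_0)\le T_{11;ii}(x_0)-2\sum_{j>D}(T_{1j;i})^2/(\lambda_j-\lambda_1)$.

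The one place where anything beyond bookkeeping enters is the construction in the last step: one must allow the test vector to rotate, to first order in $t$, into the eigendirections $e_j$ with $j>D$, and then recognize that after the harmless renormalization the second $t$-derivative of the Rayleigh quotient is a convex quadratic in the rotation rates $b_{ji}$ whose minimum is precisely the stated negative correction. Working along a single geodesic with parallel-transported frames, rather than with a fixed local frame, is what prevents curvature terms from appearing; the multiplicity $D$ enters only because the quadratic in $b_{ji}$ degenerates ($\lambda_j-\lambda_1=0$) for $j\le D$, which is why those indices are excluded from the sum --- consistently with \eqref{BK1}, which forces $T_{1j;i}(x_0)=0$ for $j\le D$ in any case.
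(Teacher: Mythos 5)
Your proof is correct: the paper does not reprove this lemma (it simply cites \cite{BCD17}), and your argument — testing the lower support $\psi$ against the Rayleigh quotient of $T$ along geodesics with a parallel frame, letting the test vector rotate to first order into the directions $e_j$, $j>D$, and optimizing the resulting convex quadratic in the rotation rates $b_{ji}$ — is essentially the standard proof given in Brendle--Choi--Daskalopoulos. The computation $\Phi''(0)=T_{11;ii}+4\sum_{j>D}b_{ji}T_{1j;i}+2\sum_{j>D}(\lambda_j-\lambda_1)b_{ji}^2$ and the polarization step yielding \eqref{BK1} are both accurate, so nothing further is needed.
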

	Now we state the following full rank theorem.
	\begin{theorem}\label{FRT}
		Let $f$ be a smooth even positive function on $\mathbb{S}^n$ satisfying Assumption \ref{assum}.
		If $1<\varphi\in C^4(\mathbb{S}^n)$ is an even, $h$-convex solution to equation \eqref{equa} for $p\ge0$, then $\varphi$ is also an even,
		uniformly $h$-convex solution to the equation \eqref{equa}.
	\end{theorem}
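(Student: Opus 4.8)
The plan is to argue by contradiction. Since $\varphi$ is $h$-convex, the tensor $A[\varphi]\ge0$, so its smallest eigenvalue $\lambda_1:=\lambda_1(A[\varphi])$ is a nonnegative continuous function on $\mathbb{S}^n$, and it suffices to prove $\lambda_1>0$ everywhere. Suppose not; then $\lambda_1$ attains the value $0$, necessarily at a global minimum point $x_0$. Let $D\ge1$ be the multiplicity of $\lambda_1$ at $x_0$. At any point we fix a local orthonormal frame diagonalizing $A[\varphi]$ (equivalently, $\varphi_{ij}$ after the corrections in \eqref{s1.Aij}), and as in Proposition \ref{lc2} we write $S(A)=(\sigma_k/C_n^k)^{1/k}(A)$, $\tilde f=(f/C_n^k)^{1/k}$, $a=p/k$, so that \eqref{equa} reads $S(A[\varphi])=\varphi^{a-1}\tilde f$ — a concave equation which is elliptic along the solution, with $(S^{ij})$ bounded and positive (using $A[\varphi]\ge0$, $\sigma_k(A[\varphi])>0$ and $\varphi\in C^4$).

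The heart of the argument is to show that $\lambda_1$ is, in the viscosity sense on $\mathbb{S}^n$, a supersolution of a linear elliptic inequality
\begin{equation*}
\sum_i S^{ii}\,\nabla^2_{ii}\lambda_1+\langle b,\nabla\lambda_1\rangle\le C\,\lambda_1,
\end{equation*}
with $b,C$ bounded in terms of $\varphi,f$ — that is, for every $x$ and every smooth lower support $\psi$ of $\lambda_1$ at $x$, the corresponding inequality for $\psi$ holds at $x$. One applies Lemma \ref{BK} to $T=A[\varphi]$, getting $A_{ij;k}(x)=\delta_{ij}\psi_{;k}(x)$ for $1\le i,j\le D$ together with $\psi_{;ii}\le A_{11;ii}-2\sum_{j>D}(A_{1j;i})^2/(\lambda_j-\lambda_1)$; weighting by $S^{ii}>0$ and summing reduces matters to an upper bound for $\sum_i S^{ii}A_{11;ii}$. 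This bound is produced by: (i) commuting covariant derivatives with \eqref{d3} of Lemma \ref{d} to replace $A_{11;ii}$ by $A_{ii;11}$ — because $A[\varphi]$ is not Codazzi this introduces extra first- and second-order terms, handled with the $C^0,C^1$ estimates of Section \ref{sec3}, the $C^2$ bound of Proposition \ref{lc2}, and \eqref{c1}; (ii) differentiating $S(A)=\varphi^{a-1}\tilde f$ twice in the direction $e_1$ to get $\sum_i S^{ii}A_{ii;11}=(\varphi^{a-1}\tilde f)_{;11}-S^{ij,rs}A_{ij;1}A_{rs;1}$; (iii) controlling the third-order term $-S^{ij,rs}A_{ij;1}A_{rs;1}$ against the gradient term $-2\sum_{j>D}(A_{1j;i})^2/(\lambda_j-\lambda_1)$ from Lemma \ref{BK} by applying the inverse-concavity inequality of Lemma \ref{sig} to the invertible block $\{D+1,\dots,n\}$ — legitimate precisely because $A_{ij;k}(x)=\delta_{ij}\psi_{;k}$ for $i,j\le D$, so only the nondegenerate part of $A[\varphi]$ couples in; and (iv) expanding $(\varphi^{a-1}\tilde f)_{;11}$, re-substituting $S(A)=\varphi^{a-1}\tilde f$, and using \eqref{c1}.

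Assumption \ref{assum} enters exactly in step (iv) to control the zeroth-order terms. Rewriting $\tilde f$ through $f^{-1/k}$ when $0\le p<k$ and through $f^{-1/p}$ when $p\ge k$ turns the lower bound on $D^2(f^{-1/k})$, resp.\ $D^2(f^{-1/p})$, in Assumption \ref{assum} into an upper bound for $(\varphi^{a-1}\tilde f)_{;11}$, and the strictly positive coefficient of the zeroth-order term in each of the six cases is calibrated so that — after the substitutions above and the use of \eqref{c1} — the accumulated non-gradient, non-$O(\lambda_1)$ contributions are strictly negative. This both yields the displayed linear inequality for $\lambda_1$ and gives it a strict sign at $x_0$. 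The strong maximum principle (in the viscosity form of \cite{BIS23,BCD17}) then forces either $\lambda_1>0$ on $\mathbb{S}^n$, which is the asserted uniform $h$-convexity, or $\lambda_1\equiv0$; the latter is excluded because, at any point where $\lambda_1=0$, taking $\psi\equiv0$ as a lower support in Lemma \ref{BK} turns the estimate above into $0\le\sum_i S^{ii}A_{11;ii}-2\sum_{j>D}S^{ii}(A_{1j;i})^2/\lambda_j<0$ by step (iv), a contradiction. Hence $\lambda_1>0$ and $\varphi$ is uniformly $h$-convex.

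The main obstacle is the bookkeeping in steps (i)--(iii): since $A[\varphi]$ is not a Codazzi tensor, commuting $\nabla^2\lambda_1$ through the equation generates the whole list of extra terms in \eqref{d3}, and after all cancellations one must show that these, together with the residual third-order terms, are dominated — partly by $C(\lambda_1+|\nabla\lambda_1|)$ and partly, through Lemma \ref{sig}, by the ``magic'' term $-2\sum_{j>D}(A_{1j;i})^2/(\lambda_j-\lambda_1)$ of Lemma \ref{BK}; the degeneracy of $A[\varphi]$ at $x_0$ makes the interplay between the kernel block $\{1,\dots,D\}$ and its complement subtle, as Lemma \ref{sig} requires invertibility. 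A secondary, purely computational point is to verify that the coefficients prescribed in the six $p$-ranges of Assumption \ref{assum} genuinely force the negative sign required in step (iv).
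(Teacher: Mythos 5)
Your overall strategy coincides with the paper's: apply the Brendle--Choi--Daskalopoulos viscosity lemma (Lemma \ref{BK}) to the smallest eigenvalue $\lambda_1$ of $A[\varphi]$, commute covariant derivatives with Lemma \ref{d}, differentiate the equation twice, absorb the third-order terms via the inverse-concavity inequality of Lemma \ref{sig} restricted to the nondegenerate block $\{D+1,\dots,n\}$ (the mixed-block terms, which you fold into ``bookkeeping'', are in fact exactly what cancels the extra term $2\sum_{i>D}\varphi_i^2\sigma_k^{ii}A_{ii}/\varphi^2$ coming from the non-Codazzi commutators), use Assumption \ref{assum} to handle the zeroth-order terms, and invoke the strong maximum principle for viscosity supersolutions to conclude that $\{\lambda_1=0\}$ is open, hence empty or all of $\mathbb{S}^n$.

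The genuine gap is in your exclusion of the alternative $\lambda_1\equiv 0$. You claim that in step (iv) the ``accumulated non-gradient, non-$O(\lambda_1)$ contributions are strictly negative'', so that taking $\psi\equiv 0$ as a lower support yields $0\le\cdots<0$. But Assumption \ref{assum} only asserts that a certain tensor built from $f^{-1/k}$ (resp.\ $f^{-1/p}$) is positive \emph{semi}-definite; after all substitutions the zeroth-order contribution is bounded above by a nonpositive multiple of the $(1,1)$-entry of that tensor, which may vanish at some points and directions. Hence the final inequality is only $\sigma_k^{ij}\psi_{ij}(x_0)\le C\bigl(\psi(x_0)+|D\psi(x_0)|\bigr)$ with no strict sign, and your contradiction at a zero of $\lambda_1$ does not follow. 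The paper closes this step by a separate elementary observation: at a minimum point of $\varphi$ one has $D\varphi=0$, $D^2\varphi\ge 0$ and $\varphi>1$, so $A[\varphi]\ge \tfrac12\bigl(\varphi-\varphi^{-1}\bigr)g_{\mathbb{S}^n}>0$ there; thus $\lambda_1$ cannot vanish identically, and openness of $\{\lambda_1=0\}$ together with connectedness of $\mathbb{S}^n$ gives $\lambda_1>0$ everywhere. With that replacement (the rest of your outline being essentially the paper's argument), the proof goes through.
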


We note that the a priori estimates established in the previous section are not used here since we are working on the assumption $\varphi\in C^4$. The full rank theorem will be used in next section for the proof of Theorem \ref{Main}.

%In particular, we will use the following immediate corollary.
 %\begin{corollary}\label{s4.cor}
  %   Let $f$ be a smooth even positive function on $\mathbb{S}^n$ satisfying Assumption \ref{assum}. 	If $\varphi>1$ is an even and $h$-convex solution to equation \eqref{equa} for $p\ge0$ and can be approximated by a family of even and uniformly $h$-convex functions $\{\varphi_s\}_{s\in \mathbb{N}}$ in $C^{4,\alpha}$ norm, then $\varphi$ is also an even and uniformly $h$-convex solution to the equation \eqref{equa}.
 %\end{corollary}
%Since each $\varphi_s$ is even, uniformly $h$-convex and $\varphi_s$ converges to $\varphi$ in $C^{4,\alpha}$ norm, then $\varphi$ satisfies the estimate \eqref{pin0} and \eqref{c1}. Moreover, the a priori $C^4$ estimates for even, uniformly $h$-convex solution $\varphi_s$ established in Theorem \ref{lcl} also descend to the even, $h$-convex solution $\varphi$. Therefore, the solution $\varphi$ satisfies the assumption in Theorem \ref{FRT} and so the conclusion in Corollary \ref{s4.cor} follows.

 %We now give the proof of Theorem \ref{FRT}.
	\begin{proof}[Proof of Theorem \ref{FRT}]	
        %Firstly, since $\varphi$ can be approximated by a family of even and uniformly $h$-convex functions $\{\varphi_s\}_{s\in \mathbb{N}}$ in $C^{4,\alpha}$ norm, then $\varphi$ will satisfy the inequalities \eqref{pin0} and \eqref{c1}.  Hence $\varphi$ will also satisfy all the a priori estimates in Section \ref{sec3}.

		For each fixed $x_0$, we choose a local orthonormal frame $e_1,
		..., e_n$ so that $(\varphi_{ij})$ is diagonal at $x_0$, with $A_{ii}=\lambda_i$ for
		$i=1, 2, ..., n$ arranged in increasing order. Differentiating the equation \eqref{equa} and using the summation convention, we have
		\begin{equation}\nonumber
			(\varphi^{p-k}f)_{ab}=(\sigma_k)_{ab}=\sigma_k^{ij,rs}A_{ija}A_{rsb}+\sigma_{k}^{ij}A_{ijab}.
		\end{equation}
		By using \eqref{d3} and $\sigma_k^{ij}$ is diagonal at $x_0$, we obtain at $x_0$
		\begin{equation}\label{Aii11}
			\begin{aligned}
				(\varphi^{p-k}f)_{11}&=\sigma_k^{ij,rs}A_{ij1}A_{rs1}+\sigma_k^{ii}A_{ii11}\\
				&=\sigma_k^{ij,rs}A_{ij1}A_{rs1}+\sigma_k^{ii}A_{11ii}-\sum_{i,m}\sigma_k^{ii}\dfrac{A_{11m}\varphi_m}{\varphi}\\
				&\ -\sum_{i}\sigma_k^{ii}\left[-\dfrac{2\varphi_{1}^2}{\varphi^2}+\dfrac{|D\varphi|^2}{2\varphi^2}+\dfrac{1}{\varphi}A_{11}+\dfrac{1}{2}\left(1+\dfrac{1}{\varphi^2}\right)\right]A_{11} +\sum_m\dfrac{(\varphi^{p-k}f)_m\varphi_m}{\varphi}\\
				&\ -2\sum_i\frac{\varphi_i^2}{\varphi^2}\sigma_k^{ii}A_{ii}+k\left[\dfrac{|D\varphi|^2}{2\varphi^2}+\dfrac{1}{2}\left(1+\dfrac{1}{\varphi^2}\right)\right]\varphi^{p-k}f+\sum_i\frac{1}{\varphi}\sigma_k^{ii}A^2_{ii},
			\end{aligned}
		\end{equation}
		where we also used
  \begin{align*}
      (\varphi^{p-k}f)_m=&\left(\sigma_k\right)_m=\sum_i\sigma_k^{ii} A_{iim},\\
      \sigma_k^{ii}A_{ii}=&k\sigma_k=k\varphi^{p-k}f
  \end{align*}
  in the second equality.
		
		Now we deduce an inequality for the lowest eigenvalue $\lambda_1$ of $A=A[\varphi]$ in a viscosity sense. Let $\psi$ be a smooth lower support at $x_0$  for $\lambda_1$, and let $D\ge1$ denote the multiplicity of $\lambda_1(x_0)$, so that $\psi(x_0)=\lambda_1(x_0)=A_{11}(x_0)=\cdots=A_{DD}(x_0)$. We denote the complement of the set $\{i,j,k,l>D\}$ in $\{1,...,n\}^4$ by $\Lambda$.
		
		For convenience, we adopt the notation of Caffarelli-Friedman \cite{CF85} and Guan-Ma \cite{GM03}.
		For any two functions defined on $\mathbb{S}^n$ and $x_0 \in
		\mathbb{S}^n$, we say that $h\lesssim m$ provided there exist positive
		constants $c_1$ and $c_2$ such that
		\begin{eqnarray*}
			h(x_0)-m(x_0)\leq c_1|D \psi(x_0)|+c_2\psi(x_0)
		\end{eqnarray*}
		with $c_1$ and $c_2$ depending only on $|\varphi|_{C^3}$,
		$|f|_{C^{2}}$ and $n$ (independent of $x_0$).
		We write $h\sim m$ if $h\lesssim m$ and $h\lesssim m$.
		
		By Lemma \ref{BK}, we have
		\begin{equation}\label{sim0}
			\begin{cases}
				\displaystyle A_{ii}\sim 0\, & \text{if}\quad 1\le i\le D;\\[2.5ex]
				\displaystyle A_{ijl}\sim 0\, & \text{if}\quad 1\le i,j\le D,\ 1\le l\le n;\\[2.5ex]
				\displaystyle \psi_{ii}\le A_{11ii}-2\sum_{j>D}\dfrac{(A_{1ji})^2}{\lambda_j}.
			\end{cases}
		\end{equation}
		Using \eqref{Aii11} and \eqref{sim0}, we get at $x_0$
		\begin{equation}\label{G0}
			\begin{aligned}
				\sigma_k^{ij}\psi_{ij}(x_0)&=\sigma_k^{ii}\psi_{ii}\\
				&\le \sigma_k^{ii}A_{11ii}-2\sum_{j>D}\frac{\sigma_k^{ii}}{\lambda_j}(A_{1ji})^2\\
				&\lesssim (\varphi^{p-k}f)_{11}-2\sum_{j>D}\frac{\sigma_k^{ii}}{\lambda_j}(A_{1ji})^2-\sigma_k^{ij,rs}A_{ij1}A_{rs1}+2\sum_{i>D
				}\frac{\varphi_i^2}{\varphi^2}\sigma_k^{ii}A_{ii}\\
				&\ \ -\sum_{m}\dfrac{(\varphi^{p-k}f)_m\varphi_m}{\varphi}-k\left[\dfrac{|D\varphi|^2}{2\varphi^2}+\dfrac{1}{2}\left(1+\dfrac{1}{\varphi^2}\right)\right]\varphi^{p-k}f.
			\end{aligned}
		\end{equation}
		
		We first estimate the second to fourth terms on the right hand side of \eqref{G0}. As mentioned earlier, we must proceed cautiously due to the non-Codazzi nature of $A[\varphi]$. We begin with commuting the covariant derivatives and making use of basic properties of the elementary symmetric functions.

  \textbf{Claim 1.} We have
\begin{align}\label{s4.cla1}
    -2\sum_{j>D}\frac{\sigma_k^{ii}}{\lambda_j}(A_{1ji})^2-\sigma_k^{ij,rs}A_{ij1}A_{rs1}+2\sum_{i>D}\frac{\varphi_i^2}{\varphi^2}\sigma_k^{ii}A_{ii}
   \lesssim  &-\dfrac{k+1}{k}\dfrac{(\varphi^{p-k}f)_1^2}{\varphi^{p-k}f}.
\end{align}
  \begin{proof}[Proof of Claim 1]
  First, we regroup the summations in these terms as
		\begin{equation*}
			2\sum_{j>D}\frac{\sigma_k^{ii}}{\lambda_j}(A_{1ji})^2=2\sum_{i,j>D}\frac{\sigma_k^{ii}}{\lambda_j}(A_{1ji})^2+2\sum_{j>D,i\le D}\frac{\sigma_k^{ii}}{\lambda_j}(A_{1ji})^2,
		\end{equation*}
		and
		\begin{equation*}
			\sigma_k^{ij,rs}A_{ij1}A_{rs1}=\sum_{i,j,r,s>D}\sigma_k^{ij,rs}A_{ij1}A_{rs1}+\sum_{(i,j,r,s)\in \Lambda}\sigma_k^{ij,rs}A_{ij1}A_{rs1}.
		\end{equation*}
	For any $i,j>D$, by \eqref{d2} we have
  \begin{equation}\label{s4.A1}
      A_{1ji}=A_{ij1}+\dfrac{\varphi_1}{\varphi}A_{11}\delta_{ij}\sim A_{ij1}.
  \end{equation}
  Since $\sigma_k^{ij}$ is diagonal at $x_0$, using the Lemma \ref{sig} and \eqref{sim0}, \eqref{s4.A1}, we obtain that
		\begin{equation}\label{G1}
			\begin{aligned}
				&-2\sum_{i,j>D}\frac{\sigma_k^{ii}}{\lambda_j}(A_{1ji})^2-\sum_{i,j,r,s>D}\sigma_k^{ij,rs}A_{ij1}A_{rs1}\\
    \sim &-2\sum_{i,j>D}\frac{\sigma_k^{ii}}{\lambda_j}(A_{ij1})^2-\sum_{i,j,r,s>D}\sigma_k^{ij,rs}A_{ij1}A_{rs1}\\
				\lesssim&-\dfrac{k+1}{k}\dfrac{\Big(\sum\limits_{i,j>D}\sigma_k^{ij}A_{ij1}\Big)^2}{\sigma_k}=-\dfrac{k+1}{k}\dfrac{\Big(\sum\limits_{i>D}\sigma_k^{ii}A_{ii1}\Big)^2}{\sigma_k}\\
				\sim&-\dfrac{k+1}{k}\dfrac{(\varphi^{p-k}f)_1^2}{\varphi^{p-k}f}.
			\end{aligned}
		\end{equation}

		On the other hand, by Lemma \ref{AA}, we have
		\begin{equation*}
			\begin{aligned}
				\sum_{(i,j,r,s)\in \Lambda}\sigma_k^{ij,rs}A_{ij1}A_{rs1}&=2\sum_{j>D,i\le D}\sigma_{k-2}(A|ij)A_{ii1}A_{jj1}-2\sum_{j>D,i\le D}\sigma_{k-2}(A|ij)A_{ij1}^2\\
				&\sim-2\sum_{j>D,i\le D}\sigma_{k-2}(A|ij)A_{ij1}^2,
			\end{aligned}
		\end{equation*}
  where we used $A_{ii1}\sim 0$ for $i\leq D$. Note that for any $i\le D,j>D$,
  \begin{align*}
      A_{1ji}=&A_{ij1}+\frac{\varphi_1}{\varphi}A_{11}\delta_{ij}-\frac{\varphi_i}{\varphi}A_{ii}\delta_{1j}=A_{ij1}\\
      A_{ij1}=&A_{ji1}=A_{1ij}+\dfrac{\varphi_j}{\varphi}A_{jj}\delta_{i1}\sim\dfrac{\varphi_j}{\varphi}A_{jj}\delta_{i1}
  \end{align*}
  by \eqref{d2} and \eqref{sim0}. Moreover, by Lemma \ref{s2.lem1} we also have
  \begin{align*}
      &\sigma_{k-1}(A|1)-\sigma_{k-2}(A|j1)A_{jj}=\sigma_{k-1}(A|1j),\\
      &\sigma_{k-1}(A|1j)=\sigma_{k-1}(A|j)-\sigma_{k-2}(A|1j)A_{11}\sim\sigma_{k-1}(A|j)=\sigma_k^{jj}.
  \end{align*}
  So that we have
		\begin{equation}\label{G2}
			\begin{aligned}
				&-2\sum_{j>D,i\le D}\frac{\sigma_k^{ii}}{\lambda_j}(A_{1ji})^2-\sum_{(i,j,r,s)\in \Lambda}\sigma_k^{ij,rs}A_{ij1}A_{rs1}\\
				\sim&-2\sum_{j>D}\Big(\sigma_{k-1}(A|1)-\sigma_{k-2}(A|j1)A_{jj}\Big)\dfrac{\varphi_j^2}{\varphi^2}A_{jj}\\
				\sim&-2\sum_{j>D}\sigma_{k}^{jj}\dfrac{\varphi_j^2}{\varphi^2}A_{jj}.
			\end{aligned}
		\end{equation}
Combining \eqref{G1} and \eqref{G2} proves the \textbf{Claim 1}.
\end{proof}

	Plugging \eqref{s4.cla1} into \eqref{G0}, we obtain the following inequality
		\begin{equation}\label{s4.pf1}
			\begin{aligned}
				\sigma_k^{ij}\psi_{ij}(x_0)\lesssim& (\varphi^{p-k}f)_{11} -\dfrac{k+1}{k}\dfrac{(\varphi^{p-k}f)_1^2}{\varphi^{p-k}f}-\sum_{m}\dfrac{(\varphi^{p-k}f)_m\varphi_m}{\varphi}\\
                &-k\left[\dfrac{|D\varphi|^2}{2\varphi^2}+\dfrac{1}{2}\left(1+\dfrac{1}{\varphi^2}\right)\right]\varphi^{p-k}f.
			\end{aligned}
		\end{equation}
  We estimate the terms separately:
		\begin{align}
			(\varphi^{p-k}f)_{1}^2=&\Big((p-k)\varphi^{p-k-1}\varphi_1 f+\varphi^{p-k}f_1\Big)^2\nonumber\\
			=& (\varphi^{p-k}f)^2\left((p-k)^2\frac{\varphi_{1}^{2}}{\varphi^{2}}+\frac{f_{1}^{2}}{ f^2}+2(p-k)\frac{\varphi_1f_{1}}{\varphi f}\right),\label{s4.t1}\\
   -\dfrac{\Sigma_m(\varphi^{p-k}f)_m\varphi_m}{\varphi}=&\varphi^{p-k}f\left((k-p)\dfrac{|D\varphi|^2}{\varphi^2}-\dfrac{\sum_mf_m\varphi_m}{\varphi f}\right), \label{s4.t2}
		\end{align}
		and
			\begin{align} \label{s4.t3}
				(\varphi^{p-k}f)_{11}&=\varphi^{p-k}f\left((p-k)\dfrac{\varphi_{11}}{\varphi}+\dfrac{f_{11}}{f}+2(p-k)\dfrac{\varphi_1f_1}{\varphi f}+(p-k)(p-k-1)\dfrac{\varphi_1^2}{\varphi^2}\right)\nonumber\\
				&\sim\varphi^{p-k}f\bigg(\dfrac{p-k}{2}\dfrac{|D\varphi|^2}{\varphi^2}-\frac{p-k}{2}(1-\dfrac{1}{\varphi^2})+\dfrac{f_{11}}{f}\nonumber\\
                &\qquad\qquad\quad +2(p-k)\dfrac{\varphi_1f_1}{\varphi f}+(p-k)(p-k-1)\dfrac{\varphi_1^2}{\varphi^2}\biggr),
			\end{align}
where we used the definition \eqref{s1.Aij} of $A_{11}$. Substituting \eqref{s4.t1} -- \eqref{s4.t3} into \eqref{s4.pf1}, we arrive at
		\begin{equation}\label{fin}
			\begin{aligned}
				\sigma_k^{ij}\psi_{ij}(x_0)&\lesssim\varphi^{p-k}f\left(\dfrac{f_{11}}{f}-\dfrac{k+1}{k}\dfrac{f_1^2}{f^2}-\dfrac{\sum_{m}\varphi_mf_m}{\varphi f}-\frac{2(p-k)}{k}\dfrac{\varphi_1f_1}{\varphi f}\right.\\
				&\quad\qquad\qquad\left.-\frac{p(p-k)}{k}\frac{\varphi_1
					^2}{\varphi^2}-\frac{p}{2}(1+\dfrac{|D\varphi|^2}{\varphi^2                      })+\frac{p-2k}{2}\dfrac{1}{\varphi^2}\right).
			\end{aligned}
		\end{equation}
		\vspace{0.5em}

	\textbf{Claim 2.} If $f$ satisfies the Assumption \ref{assum}, then $\sigma_k^{ij}\psi_{ij}(x_0)\lesssim 0$.
	\begin{proof}[Proof of Claim 2]
		We divide the proof into five cases.
		
		\textbf{Case (1).} $p=0$. In this case, we have that
		\begin{equation}
			\nonumber
			\begin{aligned}
				\sigma_k^{ij}\psi_{ij}(x_0)&\lesssim\varphi^{p-k}f\left(\dfrac{f_{11}}{f}-\dfrac{k+1}{k}\dfrac{f_1^2}{f^2}-\dfrac{\varphi_1f_1-\sum_{m\neq 1}\varphi_mf_m}{\varphi f}-\dfrac{k}{\varphi^2}\right)\\
				&\lesssim\varphi^{-k}f\left(\dfrac{f_{11}}{f}-\dfrac{k+1}{k}\dfrac{f_1^2}{f^2}+\dfrac{|D\varphi||Df|}{\varphi f}-\dfrac{k}{\varphi^2}\right)\\
				&=k\varphi^{-k}f^{1+\frac{1}{k}}\left(-(f^{-\frac{1}{k}})_{11}+\dfrac{|D\varphi|}{\varphi}|Df^{-\frac{1}{k}}|-\dfrac{f^{-\frac{1}{k}}}{\varphi^2}\right).
			\end{aligned}
		\end{equation}
		By \eqref{c0} and \eqref{c1}, we have
		\begin{equation}\nonumber
			\varphi^2\le\left(\max\limits_{\mathbb{S}^n}\varphi\right)^2\le2+8\max\limits_{\mathbb{S}^n}\left(\dfrac{f}{C^k_n}\right)^{\frac{1}{k}},\quad \dfrac{|D\varphi|}{\varphi}\le 1.
		\end{equation}
		Thus, we obtain
		\begin{equation}
			\nonumber
			\begin{aligned}
				\sigma_k^{ij}\psi_{ij}(x_0)&\lesssim -k\varphi^{-k}f^{1+\frac{1}{k}}\left((f^{-\frac{1}{k}})_{11}-|Df^{-\frac{1}{k}}|+\dfrac{f^{-\frac{1}{k}}}{2+8\max\limits_{\mathbb{S}^n}\left(\dfrac{f}{C^k_n}\right)^{\frac{1}{k}}}\right)\le 0.
			\end{aligned}
		\end{equation}
		This completes the proof of Case (1).
		
		\textbf{Case (2).} $0< p\le \frac{k}{2}$. In this case $\dfrac{p(p-k)}{k}+\dfrac{p}{2}=\dfrac{p(2p-k)}{2k}\leq 0$. Using Cauchy-Schwarz inequality and \eqref{c1} we obtain
		\begin{equation}\label{case23}
			\begin{aligned}
				&-\dfrac{\sum_{m}\varphi_mf_m}{\varphi f}-\frac{2(p-k)}{k}\dfrac{\varphi_1f_1}{\varphi f}-\frac{p(p-k)}{k}\frac{\varphi_1^2}{\varphi^2}-\frac{p}{2}(1+\dfrac{|D\varphi|^2}{\varphi^2                      })+\frac{p-2k}{2}\dfrac{1}{\varphi^2}\\
				\le&\frac{k+2p}{k}\dfrac{|D\varphi||Df|}{\varphi f}-\Big(\frac{p(p-k)}{k}+\frac{p}{2}\Big)\frac{|D\varphi|^2}{\varphi^2}-\frac{p}{2}\\
				\le&\frac{k+2p}{k}\dfrac{|Df|}{f}-\dfrac{p^2}{k}.
			\end{aligned}
		\end{equation}
		Therefore
		\begin{equation}\nonumber
			\begin{aligned}
				\sigma_k^{ij}\psi_{ij}(x_0)&\lesssim\varphi^{p-k}f\left(\dfrac{f_{11}}{f}-\dfrac{k+1}{k}\dfrac{f_1^2}{f^2}+\frac{k+2p}{k}\dfrac{|Df|}{f}-\dfrac{p^2}{k}\right)\\
				&=-k\varphi^{p-k}f^{1+\frac{1}{k}}\left((f^{-\frac{1}{k}})_{11}-\frac{k+2p}{k}|Df^{-\frac{1}{k}}|+\dfrac{p^2}{k^2}f^{-\frac{1}{k}}\right)\\
				&\le 0.
			\end{aligned}
		\end{equation}
		Then we finish the proof of Case (2).

		\textbf{Case (3).} $\frac{k}{2}< p<k$. In this case $\dfrac{p(p-k)}{k}+\dfrac{p}{2}=\dfrac{p(2p-k)}{2k}\ge0$. Recall \eqref{case23}, we have
		\begin{equation}\nonumber
			\begin{aligned}
				&-\dfrac{\sum_{m}\varphi_mf_m}{\varphi f}-\frac{2(p-k)}{k}\dfrac{\varphi_1f_1}{\varphi f}-\frac{p(p-k)}{k}\frac{\varphi_1^2}{\varphi^2}-\frac{p}{2}(1+\dfrac{|D\varphi|^2}{\varphi^2                      })-\frac{2k-p}{2}\dfrac{1}{\varphi^2}\\
				\le&\frac{3k-2p}{k}\dfrac{|D\varphi||Df|}{\varphi f}-\Big(\frac{p(p-k)}{k}+\frac{p}{2}\Big)\frac{|D\varphi|^2}{\varphi^2}-\frac{p}{2}\\
				\le&\dfrac{(3k-2p)^2}{2kp(2p-k)}\dfrac{|Df|^2}{f^2}-\dfrac{p}{2},
			\end{aligned}
		\end{equation}
  where we used Cauchy-Schwarz inequality in the last inequality. Therefore
		\begin{equation}\nonumber
			\begin{aligned}
				\sigma_k^{ij}\psi_{ij}(x_0)&\lesssim\varphi^{p-k}f\left(\dfrac{f_{11}}{f}-\dfrac{k+1}{k}\dfrac{f_1^2}{f^2}+\dfrac{(3k-2p)^2}{2kp(2p-k)}\dfrac{|Df|^2}{f^2}-\dfrac{p}{2}\right)\\
				&=-k\varphi^{p-k}f^{1+\frac{1}{k}}\left((f^{-\frac{1}{k}})_{11}-\dfrac{(3k-2p)^2}{p(2p-k)}\dfrac{|Df^{-\frac{1}{k}}|^2}{2f^{-\frac{1}{k}}}+\dfrac{1}{2}\dfrac{p}{k}f^{-\frac{1}{k}}\right)\\
				&\le 0.
			\end{aligned}
		\end{equation}
		Thus we finish the proof of Case (3).
		
		\textbf{Case (4).} $k\le p\le 2k$. In this case $\dfrac{p+1}{p}-\dfrac{k+1}{k}=-\dfrac{p-k}{pk}\le 0$, we realize that
		\begin{equation}\label{mix}
			-\dfrac{p-k}{pk}\dfrac{f_1^2}{f^2}-\frac{2(p-k)}{k}\dfrac{\varphi_1f_1}{\varphi f}-\frac{p(p-k)}{k}\frac{\varphi_1^2}{\varphi^2}=-\dfrac{p-k}{pk}\Big(\dfrac{f_1}{f}+p\frac{\varphi_1}{\varphi}\Big)^2\le0.
		\end{equation}
		Substituting \eqref{mix} into \eqref{fin} and using Cauchy-Schwarz inequality, we obtain
		\begin{equation}\nonumber
			\begin{aligned}
				\sigma_k^{ij}\psi_{ij}(x_0)&\lesssim\varphi^{p-k}f\left(\dfrac{f_{11}}{f}-\dfrac{p+1}{p}\dfrac{f_1^2}{f^2}+\dfrac{|D\varphi||Df|}{\varphi f}-\frac{p}{2}(1+\dfrac{|D\varphi|^2}{\varphi^2      })\right)\\
				&\le\varphi^{p-k}f\left(\dfrac{f_{11}}{f}-\dfrac{p+1}{p}\dfrac{f_1^2}{f^2}+\dfrac{|Df|^2}{2pf^2}-\frac{p}{2}\right)\\
				&=-p\varphi^{p-k}f^{1+\frac{1}{p}}\left((f^{-\frac{1}{p}})_{11}-\dfrac{|Df^{-\frac{1}{p}}|^2}{2f^{-\frac{1}{p}}}+\frac{1}{2}f^{-\frac{1}{p}}\right)\\
				&\le 0.
			\end{aligned}
		\end{equation}
		Thus we complete the proof of Case (4).

        \textbf{Case (5).} $p>2k$. This case is similar to Case (4), the only difference is $\dfrac{p-2k}{2}\dfrac{1}{\varphi^2}>0$ in this case. Using $\varphi>1$, we have
        \begin{equation}\nonumber
			\begin{aligned}
				\sigma_k^{ij}\psi_{ij}(x_0)&\lesssim\varphi^{p-k}f\left(\dfrac{f_{11}}{f}-\dfrac{p+1}{p}\dfrac{f_1^2}{f^2}+\dfrac{|Df|^2}{2pf^2}-\frac{p}{2}+\dfrac{p-2k}{2}\dfrac{1}{\varphi^2}\right)\\
                &\le\varphi^{p-k}f\left(\dfrac{f_{11}}{f}-\dfrac{p+1}{p}\dfrac{f_1^2}{f^2}+\dfrac{|Df|^2}{2pf^2}-k\right)\\
				&=-p\varphi^{p-k}f^{1+\frac{1}{p}}\left((f^{-\frac{1}{p}})_{11}-\dfrac{|Df^{-\frac{1}{p}}|^2}{2f^{-\frac{1}{p}}}+\frac{k}{p}f^{-\frac{1}{p}}\right)\\
				&\le 0.
			\end{aligned}
		\end{equation}
        Thus we complete the proof of Case (5), and so the \textbf{Claim 2}  is verified.
		\end{proof}
		
		Therefore, we in fact deduce the following inequality:
		\begin{equation}\nonumber
			\sigma_k^{ij}\psi_{ij}(x_0)\le C(\psi(x_0)+|D\psi(x_0)|),
		\end{equation}
		where $C$ is a  positive constant independent of $x_0$. Since $f$ is positive and $A[\varphi]$ is positive semi-definite, we know that $\varphi$ is $k$-admissible and then $\sigma_k^{ij}$ is positive definite. Then the strong maximum principle for viscosity solutions (cf. \cite{BD99}) implies that the set $\{\lambda_1=0\}$ is open. Hence, if $\lambda_1$ was zero somewhere, it would be zero everywhere. However, we know $A[\varphi]$ is positive definite at the minimum point of $\varphi$. Therefore, we have $A[\varphi]>0$ everywhere which completes the proof of Theorem \ref{FRT}.
	\end{proof}
	
	\section{Proof of Theorem \ref{Main}}\label{sec5}
	
	In this section, we apply the degree theory developed in \cite{Li89} to prove Theorem \ref{Main}. We first need the following uniqueness of constant solutions to equation \eqref{equa} with constant prescribed function. 	
	\begin{lemma}\label{A-C}
		Let $n\ge2$ and $1\le k\le n-1$ be integers, let $p\ge0$ be a real number, and let $\gamma$ be a positive constant. Consider the equation
		\begin{eqnarray}\label{MA-c}
			\sigma_k(A[\varphi])=\varphi^{p-k}\gamma.
		\end{eqnarray}
        Then we have following conclusions

        (1) If $0\le p<2k$, then the equation \eqref{MA-c} has a unique even uniformly $h$-convex solution $\varphi=c>1$ for each $\gamma\in (0,\infty)$.

        (2) If $p=2k$ and $0<\gamma<2^{-k}{C_n^k}$, then the equation \eqref{MA-c} has a unique even uniformly $h$-convex solution $\varphi=c>1$.

        (3) If $p>2k$ and $\gamma=\gamma_p=p^{-\frac{p}{2}}k^k(p-2k)^{\frac{p-2k}{2}}C^k_n$, then the equation \eqref{MA-c} has a unique even uniformly $h$-convex solution $\varphi(x)=\Big(\frac{p}{p-2k}\Big)^{\frac{1}{2}}$.

       % (4) If $p>2k$ and $\gamma\in (0,\gamma_p)$, then there exists exactly two solutions $\varphi(x)=\xi_1$ and $\varphi=\xi_2$ of \eqref{MA-c}, where $1<\xi_1<\xi_2<\infty$ are constants in \eqref{s3.xi}.
	\end{lemma}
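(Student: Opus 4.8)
The plan is to exploit that a constant function automatically meets the uniform $h$-convexity requirement, so that restricting \eqref{MA-c} to constants reduces it to a single scalar equation in one variable, whose solvability is then settled by an elementary monotonicity analysis. First I would substitute $\varphi\equiv c>1$ into \eqref{s1.Aij}: since $D\varphi=0$ and the covariant Hessian of a constant vanishes, $A_{ij}[c]=\tfrac12\bigl(c-\tfrac1c\bigr)\sigma_{ij}$, which is positive definite exactly because $c>1$; hence every constant $c>1$ is an even, uniformly $h$-convex function, and $\sigma_k(A[c])=C_n^k\bigl(\tfrac12(c-\tfrac1c)\bigr)^k$. Thus \eqref{MA-c} becomes $C_n^k\bigl(\tfrac12(c-1/c)\bigr)^k=\gamma c^{p-k}$; multiplying by $c^k$ and setting $t=c^2\in(1,\infty)$ this is equivalent to
$$\frac{C_n^k}{2^k}(t-1)^k=\gamma\,t^{p/2},\qquad\text{i.e.}\qquad \Phi(t):=\frac{(t-1)^k}{t^{p/2}}=\frac{2^k\gamma}{C_n^k},$$
and the three assertions of the lemma become statements about the range and the level sets of $\Phi$ on $(1,\infty)$.

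The key computation is $(\log\Phi)'(t)=\dfrac{k}{t-1}-\dfrac{p}{2t}=\dfrac{(2k-p)t+p}{2t(t-1)}$. If $0\le p\le 2k$ the numerator is positive on all of $(1,\infty)$, so $\Phi$ is strictly increasing, with $\Phi(1^+)=0$ and $\Phi(t)\to\infty$ when $p<2k$, whereas $\Phi(t)=(1-1/t)^k\to 1$ when $p=2k$. This gives Case (1) — for every $\gamma>0$ there is a unique admissible $t$, hence a unique $c=\sqrt t>1$ — and Case (2) — a unique solution exists precisely when $2^k\gamma/C_n^k<1$, i.e. $\gamma<2^{-k}C_n^k$. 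If $p>2k$ the numerator vanishes only at $t_\ast=\tfrac{p}{p-2k}>1$, so $\Phi$ increases on $(1,t_\ast)$, decreases on $(t_\ast,\infty)$, and $\Phi(1^+)=\Phi(\infty)=0$; using $t_\ast-1=\tfrac{2k}{p-2k}$ one finds $\Phi(t_\ast)=\tfrac{(2k)^k(p-2k)^{(p-2k)/2}}{p^{p/2}}$. Hence $\Phi(t)=2^k\gamma/C_n^k$ has exactly one solution precisely when the right-hand side equals this maximum, that is $\gamma=\tfrac{C_n^k}{2^k}\Phi(t_\ast)=\gamma_p$, and then $t=t_\ast$, so $c=\bigl(\tfrac{p}{p-2k}\bigr)^{1/2}$; this is Case (3).

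Because the statement concerns only constant solutions, there is no real analytic difficulty; the two points deserving care are the sign analysis of $(2k-p)t+p$ underlying the trichotomy and, in Case (3), checking the routine identities $\tfrac{C_n^k}{2^k}\Phi(t_\ast)=\gamma_p$ and $\sqrt{t_\ast}=(\tfrac{p}{p-2k})^{1/2}$ once $t_\ast-1=\tfrac{2k}{p-2k}$ is substituted. Should one also need that $c$ is the \emph{only} even uniformly $h$-convex solution of \eqref{MA-c} (which is what the degree computation in Section \ref{sec5} ultimately rests on), one would compare an arbitrary such solution $\varphi$ with $c$: the difference $w=\varphi-c$ satisfies a linear equation whose principal part is $\int_0^1\sigma_k^{ij}\bigl(A[sc+(1-s)\varphi]\bigr)\,ds$ applied to $D^2w$, and this is uniformly elliptic since $A[\cdot]$ is concave in $(\varphi,D\varphi)$, whence $A[sc+(1-s)\varphi]\ge sA[c]+(1-s)A[\varphi]>0$ along the segment; a strong maximum principle argument on $\mathbb{S}^n$, together with the uniqueness of constant solutions just obtained, then forces $w\equiv 0$. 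This last step — making the maximum principle bite despite the possibly indefinite zeroth-order term — is the only genuinely delicate point.
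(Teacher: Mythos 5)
Your elementary analysis of constant solutions is correct and matches the numerology of the lemma: substituting $\varphi\equiv c$ gives $A[c]=\tfrac12(c-\tfrac1c)g_{\mathbb{S}^n}$ and the scalar equation $\Phi(t)=(t-1)^k t^{-p/2}=2^k\gamma/C_n^k$ with $t=c^2$, and your monotonicity discussion of $\Phi$ correctly produces the three regimes, including $\Phi(t_\ast)=\tfrac{(2k)^k(p-2k)^{(p-2k)/2}}{p^{p/2}}$ and $c=\big(\tfrac{p}{p-2k}\big)^{1/2}$ in case (3). However, this only proves existence and uniqueness \emph{within the class of constants}, whereas the lemma asserts that the constant is the unique even uniformly $h$-convex solution of \eqref{MA-c} among \emph{all} such functions; this rigidity statement is the actual content, and it is exactly what the degree argument in Section 5 needs. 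The paper does not prove it by a comparison argument: it invokes Theorem 8.1 of \cite{Li-Xu}, a rigidity theorem obtained there through the horospherical $p$-Brunn--Minkowski theory, and treats the lemma as a corollary.

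Your sketched uniqueness step is a genuine gap, not merely a delicate point: the linear equation satisfied by $w=\varphi-c$ has a zeroth-order coefficient of the \emph{wrong} sign, so the strong maximum principle cannot force $w\equiv0$. Indeed, the paper's own computation in Section 5 shows that the linearization of $\varphi\mapsto\sigma_k(A[\varphi])-\gamma\varphi^{p-k}$ at $\varphi=c$ is $L_c=a(\Delta_{\mathbb{S}^n}+b)$ with $a>0$ and $0<b\le n$; an operator $\Delta_{\mathbb{S}^n}+b$ with $b>0$ admits no maximum principle, and for $k=1$, $p=0$ one even has $b=n$, so $L_c$ has the nontrivial kernel spanned by the (odd) first spherical harmonics. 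This shows that evenness is essential to uniqueness and must enter any correct proof, whereas your argument never uses it; also note that $A[\varphi]$ being concave in $(\varphi,D\varphi)$ only controls the second-order and gradient structure, not the sign of the zeroth-order term coming from differentiating $\tfrac12(\varphi-\tfrac1\varphi)$ and $-\gamma\varphi^{p-k}$. To complete the proof along the paper's lines you should either cite Li--Xu's Theorem 8.1 or supply a genuinely global argument (e.g.\ an integral/Alexandrov--Fenchel-type inequality exploiting evenness); the local comparison with $c$ cannot be repaired as written.
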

	
	\begin{proof}
		This lemma is a corollary of Theorem 8.1 in \cite{Li-Xu}.
	\end{proof}
	
	Let consider the Banach space
	\begin{eqnarray*}
		\mathcal{B}^{2,\alpha}(\mathbb{S}^n)=\{\varphi \in
		C^{2,\alpha}(\mathbb{S}^n): \varphi \ \mbox{is even}\}
	\end{eqnarray*}
	and
	\begin{eqnarray*}
		\mathcal{B}_{0}^{4,\alpha}(\mathbb{S}^n)=\{\varphi \in
		C^{4,\alpha}(\mathbb{S}^n): A[\varphi]\ge0 \ \mbox{and} \ 	\varphi \ \mbox{is even}\}.
	\end{eqnarray*}
	Given a smooth positive even function $f$ on $\mathbb{S}^n$, consider the nonlinear differential operator $\mathcal{L}(\cdot, t): \mathcal{B}_{0}^{4,\alpha}(\mathbb{S}^n)\rightarrow 	\mathcal{B}^{2,\alpha}(\mathbb{S}^n)$ defined by
	\begin{eqnarray*}
		\mathcal{L}(\varphi, t)=\sigma_k(A[\varphi])-\varphi^{p-k} f_t,
	\end{eqnarray*}
	where
	\begin{equation}\label{ft}
		f_t=\begin{cases}
			\displaystyle \Big((1-t)(\max_{\mathbb{S}^n}f)^{-\frac{1}{k}}+t f^{-\frac{1}{k}}\Big)^{-k}, & \text{if}\quad 0\le p<k;\\[2.5ex]
			\displaystyle \Big((1-t)(\max_{\mathbb{S}^n}f)^{-\frac{1}{p}}+t f^{-\frac{1}{p}}\Big)^{-p}, & \text{if} \quad k\le p\le2k;\\[2.5ex]
            \displaystyle \Big((1-t)\gamma_p^{-\frac{1}{p}}+t f^{-\frac{1}{p}}\Big)^{-p}, & \text{if} \quad p>2k,\quad 2\leq k\leq n-1.
		\end{cases}
	\end{equation}
	It is straightforward to verify that if $f$ satisfies Assumption \ref{assum}, then $f_t$ also satisfies Assumption \ref{assum} for all $t\in[0,1]$.
	
	Consider the open set
 $$\mathcal{O}_R=\{\varphi \in \mathcal{B}_{0}^{4,\alpha}(\mathbb{S}^n):
	1+\frac{1}{R}< \varphi, \ 0< A[\varphi], \ |\varphi|_{C^{4,\alpha}(\mathbb{S}^n)}<R\}$$
 in $\mathcal{B}_{0}^{4,\alpha}(\mathbb{S}^n)$.  The a priori estimates in Section \ref{sec3} and full rank theorem in Section \ref{sec4} imply that $\mathcal{L}(\varphi, t)=0$ has no solution on $\partial \mathcal{O}_R$ for sufficiently large $R$. In fact, suppose there is an even, $h$-convex solution $\varphi\in\partial \mathcal{O}_R$ for $\mathcal{L}(\varphi,t)=0$. The full rank theorem implies that $\varphi$ is uniformly $h$-convex. Applying the $C^0$ estimate in Lemma \ref{lc0} and the regularity theorem in Theorem \ref{lcl}, we have that the lower bound of $\varphi$ is controlled by a constant $C>1$ depending only the lower bound of $f$, $n,p,k$;  and the $C^{4,\alpha}$ norm of $\varphi$ is controlled by $C^{4}$ norm of $f$, lower bound of $f$ and $n,p,k, \alpha$.  By choosing $R$ sufficiently large, we have that such a uniformly $h$-convex solution $\varphi$ satisfies $\varphi>1+\frac{1}{R}$ and the $C^{4,\alpha}$ norm of $\varphi$ can be strictly less than $R$. This is a contradiction to that $\varphi\in \partial \mathcal{O}_R$  and so $\partial \mathcal{O}_R\cap \mathcal{L}^{-1}(\cdot,t)=\emptyset$. Therefore the degree $\deg(\mathcal{L}(\cdot, t), \mathcal{O}_R, 0)$ is well-defined for each $0\leq t\leq 1$ (\cite[Definition 2.2]{Li89}) and the homotopic invariance of the degree ( \cite[Proposition 2.2]{Li89}) implies
	\begin{eqnarray}\label{hot}
		\deg(\mathcal{L}(\cdot, 1), \mathcal{O}_R, 	0)=\deg(\mathcal{L}(\cdot, 0), \mathcal{O}_R, 0).
	\end{eqnarray}
	
	At $t=0$, $\mathcal{L}(\varphi, 0)=0$ is equivalent to $\sigma_k(A[\varphi])=\varphi^{p-k} f_0$, where $f_0$ is a positive constant given by
 \begin{equation}\label{f0}
		f_0=\begin{cases}
			\max_{\mathbb{S}^n}f, & \text{if}\quad 0\le p\leq 2k;\\[2ex]
			\gamma_p, & \text{if} \quad p>2k.
		\end{cases}
	\end{equation}
 By assumption, for $p=2k$, we have $\max_{\mathbb{S}^n}f<2^{-k}C_n^k$. Then Lemma \ref{A-C} tells us that
	$\varphi=c>1$ is the unique even solution for $\mathcal{L}(\varphi, 0)=0$ in $\mathcal{O}_R$. In particular, $c=\Big(\frac{p}{p-2k}\Big)^{\frac{1}{2}}$ when $p>2k$. We can compute the degree $\deg(\mathcal{L}(\cdot, 0), \mathcal{O}_R, 0)$ in terms of the degree of the linearized operator $L_{c}$ of $\mathcal{L}(\varphi,0)$ at $\varphi=c$. Direct calculation shows that
	at $\varphi=c$ the linearized operator is given by
	\begin{align*}
		L_{c}(\bar{\varphi})=&\frac{d}{d\varepsilon}\bigg|_{\varepsilon=0}\mathcal{L}(c+\varepsilon\bar{\varphi},0)\\
  =&\sigma_k^{ij}\left(A[c]\right)\frac{d}{d\varepsilon}\bigg|_{\varepsilon=0}A_{ij}[c+\varepsilon\bar{\varphi}]-(p-k)c^{p-k-1}f_0\bar{\varphi}\\
  =&C_n^{k-1}\frac{1}{2^{k-1}}(c-\frac{1}{c})^{k-1}\left(\Delta_{\mathbb{S}^n}\bar{\varphi}+\frac{n}{2}\left(1+\frac{1}{c^2}\right)\bar{\varphi}\right)-\frac{(p-k)}{c}\sigma_k(A[c])\bar{\varphi}\\
  =&a\left(\Delta_{\mathbb{S}^n}+b\right)\bar{\varphi},
	\end{align*}
	where $a=C_n^{k-1}\frac{1}{2^{k-1}}(c-\frac{1}{c})^{k-1}>0$ and $0<
	b=n-\Big(k-1+\frac{p(n-k+1)}{k}\Big)(\frac{1}{2}-\frac{1}{2c^2})\le n$ (If $k=1$ and $p>2k$, then $b=0$, we do not consider this case). The linearized operator $L_{c}$ is invertible, since $\bar{\varphi}=0$ is the unique even solution of $L_{c}(\bar{\varphi})=0$. 	 So, by Proposition
	2.3 in \cite{Li89} we have
	\begin{eqnarray*}
		\deg(\mathcal{L}(\cdot, 0), \mathcal{O}_R, 0)=\deg(L_{c}, 	\mathcal{O}_R, 0).
	\end{eqnarray*}
	As the eigenvalues of the Laplace operator $\Delta_{\mathbb{S}^n}$ on $\mathbb{S}^n$ are strictly less than
	$-n$ except for the first two eigenvalues $0$ and $-n$, there is only one positive eigenvalue of $L_{c}$
	with multiplicity $1$, namely $\mu=ab$. 	Then we have by Proposition
	2.4 in \cite{Li89}
	\begin{eqnarray*}
		\deg(\mathcal{L}(\cdot, 0), \mathcal{O}_R, 0)=\deg(L_{c}, \mathcal{O}_R, 0)=\sum_{\mu_j>0}(-1)^{\beta_j}=-
		1,
	\end{eqnarray*}
 where $\mu_j$ are eigenvalues of $L_c$ and $\beta_j$ its multiplicity.  Therefore, it follows from \eqref{hot}
	\begin{eqnarray*}
		\deg(\mathcal{L}(\cdot, 1), \mathcal{O}_R; 	0)=\deg(\mathcal{L}(\cdot, 0), \mathcal{O}_R, 0)=-
		1.
	\end{eqnarray*}
	In particular, for $t=1$ we obtain a $C^4$, even and uniformly $h$-convex solution to $\mathcal{L}(\varphi,1)=0$, setting the existence result of Theorem \ref{Main}. The regularity of $\varphi$ follows from Theorem \ref{lcl}. This completes the proof of Theorem \ref{Main}.

	%----------------------------------------------------------
	\begin{bibdiv}
		\begin{biblist}
		\bibliographystyle{amsplain}

			\bib{And20}{article}{
				author={Andrews, Ben},
				author={Chen, Xuzhong},
				author={Wei, Yong},
				title={Volume preserving flow and Alexandrov-Fenchel type inequalities in
					hyperbolic space},
				journal={J. Eur. Math. Soc. (JEMS)},
				volume={23},
				date={2021},
				number={7},
				pages={2467--2509},
				issn={1435-9855},
				%review={\MR{4269419}},
				%doi={10.4171/jems/1059},
			}
                \bib{AMZ13}{article}{
                author={Andrews, Ben},
                author={McCoy, James},
                author={Zheng, Yu},
                title={Contracting convex hypersurfaces by curvature},
                journal={Calc. Var. Partial Differential Equations},
                volume={47},
                date={2013},
                number={3-4},
                pages={611--665},
                issn={0944-2669},
                %review={\MR{3070558}},
                %doi={10.1007/s00526-012-0530-3},
                }
			\bib{BD99}{article}{
				author={Bardi, Martino},
				author={Da Lio, Francesca},
				title={On the strong maximum principle for fully nonlinear degenerate
					elliptic equations},
				journal={Arch. Math. (Basel)},
				volume={73},
				date={1999},
				number={4},
				pages={276--285},
				issn={0003-889X},
				%review={\MR{1710100}},
				%doi={10.1007/s000130050399},
			}
			\bib{BCD17}{article}{
				author={Brendle, Simon},
				author={Choi, Kyeongsu},
				author={Daskalopoulos, Panagiota},
				title={Asymptotic behavior of flows by powers of the Gaussian curvature},
				journal={Acta Math.},
				volume={219},
				date={2017},
				number={1},
				pages={1--16},
				issn={0001-5962},
				%review={\MR{3765656}},
				%doi={10.4310/ACTA.2017.v219.n1.a1},
			}
			\bib{BIS23}{article}{
				author={Bryan, Paul},
				author={Ivaki, Mohammad N.},
				author={Scheuer, Julian},
				title={Constant rank theorems for curvature problems via a viscosity
					approach},
				journal={Calc. Var. Partial Differential Equations},
				volume={62},
				date={2023},
				number={3},
				pages={Paper No. 98, 19},
				issn={0944-2669},
				%review={\MR{4544177}},
				%doi={10.1007/s00526-023-02442-5},
			}
                \bib{BIS23b}{article}{
                author={Bryan, Paul},
                author={Ivaki, Mohammad N.},
                author={Scheuer, Julian},
                title={Christoffel-Minkowski flows},
                journal={Trans. Amer. Math. Soc.},
                volume={376},
                date={2023},
                number={4},
                pages={2373--2393},
                issn={0002-9947},
               % review={\MR{4557868}},
               % doi={10.1090/tran/8683},
                }
			\bib{CF85}{article}{
				author={Caffarelli, Luis A.},
				author={Friedman, Avner},
				title={Convexity of solutions of semilinear elliptic equations},
				journal={Duke Math. J.},
				volume={52},
				date={1985},
				number={2},
				pages={431--456},
				issn={0012-7094},
				%review={\MR{0792181}},
				%doi={10.1215/S0012-7094-85-05221-4},
			}
			\bib{Ch1}{article}{
				author={Chang, Sun-Yung A.},
				author={Gursky, Matthew J.},
				author={Yang, Paul C.},
				title={An a priori estimate for a fully nonlinear equation on
					four-manifolds},
				journal={J. Anal. Math.},
				volume={87},
				date={2002},
				pages={151--186},
				issn={0021-7670},
				%review={\MR{1945280}},
				%doi={10.1007/BF02868472},
			}
			\bib{Ch2}{article}{
				author={Chang, Sun-Yung A.},
				author={Gursky, Matthew J.},
				author={Yang, Paul C.},
				title={An equation of Monge-Amp\`ere type in conformal geometry, and
					four-manifolds of positive Ricci curvature},
				journal={Ann. of Math. (2)},
				volume={155},
				date={2002},
				number={3},
				pages={709--787},
				issn={0003-486X},
				%review={\MR{1923964}},
				%doi={10.2307/3062131},
			}
			\bib{Ch3}{article}{
				author={Chang, Sun-Yung A.},
				title={Conformal invariants and partial differential equations},
				journal={Bull. Amer. Math. Soc. (N.S.)},
				volume={42},
				date={2005},
				number={3},
				pages={365--393},
				issn={0273-0979},
				%review={\MR{2149088}},
				%doi={10.1090/S0273-0979-05-01058-X},
			}

   \bib{CGLS22}{article}{
   author={Chen, Chuanqiang},
   author={Guan, Pengfei},
   author={Li, Junfang},
   author={Scheuer, Julian},
   title={A fully-nonlinear flow and quermassintegral inequalities in the
   sphere},
   journal={Pure Appl. Math. Q.},
   volume={18},
   date={2022},
   number={2},
   pages={437--461},
   issn={1558-8599},
%   review={\MR{4429215}},
}

			\bib{Chen}{article}{
				author={Chen, Li},
				title={Smooth solutions to the Christoffel problem in $\mathbb{H}^{n+1}$},
				%journal={to appear in Anal. \& PDE},
				eprint={arXiv:2406.09449},
			}
			
			\bib{Esp09}{article}{
				author={Espinar, Jos\'e M.},
				author={G\'alvez, Jos\'e A.},
				author={Mira, Pablo},
				title={Hypersurfaces in $\mathbb{H}^{n+1}$ and conformally invariant
					equations: the generalized Christoffel and Nirenberg problems},
				journal={J. Eur. Math. Soc. (JEMS)},
				volume={11},
				date={2009},
				number={4},
				pages={903--939},
				issn={1435-9855},
				%review={\MR{2538508}},
				%doi={10.4171/JEMS/170},
			}

   \bib{GZ24}{article}{
   author={Gao, Chaoqun},
   author={Zhou, Rong},
   title={Geometric inequalities and their stabilities for modified
   quermassintegrals in hyperbolic space},
   journal={Nonlinear Anal.},
   volume={244},
   date={2024},
   pages={Paper No. 113537, 20},
   issn={0362-546X},
 %  review={\MR{4729791}},
 %  doi={10.1016/j.na.2024.113537},
}

			\bib{GE}{article}{
				author={Ge, Yuxin},
				author={Wang, Guofang},
				title={On a fully nonlinear Yamabe problem},
				language={English, with English and French summaries},
				journal={Ann. Sci. \'Ecole Norm. Sup. (4)},
				volume={39},
				date={2006},
				number={4},
				pages={569--598},
				issn={0012-9593},
				%review={\MR{2290138}},
				%doi={10.1016/j.ansens.2005.12.007},
			}

   \bib{Ge15}{article}{
   author={Gerhardt, Claus},
   title={Curvature flows in the sphere},
   journal={J. Differential Geom.},
   volume={100},
   date={2015},
   number={2},
   pages={301--347},
   issn={0022-040X},
  % review={\MR{3343834}},
}
			
			\bib{GT}{book}{
				author={Gilbarg, David},
				author={Trudinger, Neil S.},
				title={Elliptic partial differential equations of second order},
				series={Classics in Mathematics},
				note={Reprint of the 1998 edition},
				publisher={Springer-Verlag, Berlin},
				date={2001},
				pages={xiv+517},
				isbn={3-540-41160-7},
				%review={\MR{1814364}},
			}

   \bib{Guan}{article}{
				author={Guan, Pengfei},
				title={Topics in Geometric Fully Nonlinear Equations},
				eprint={https://www.math.mcgill.ca/guan/zheda0508.pdf},
				date={2006},
			}

			\bib{Guan14}{article}{
				author={Guan, Pengfei},
				title={Curvature measures, isoperimetric type inequalities and fully
					nonlinear PDEs},
				conference={
					title={Fully nonlinear PDEs in real and complex geometry and optics},
				},
				book={
					series={Lecture Notes in Math.},
					volume={2087},
					publisher={Springer, Cham},
				},
				date={2014},
				pages={47--94},
				%review={\MR{3203559}},
				%doi={10.1007/978-3-319-00942-12},
			}
			\bib{GM03}{article}{
				author={Guan, Pengfei},
				author={Ma, Xi-Nan},
				title={The Christoffel-Minkowski problem. I. Convexity of solutions of a
					Hessian equation},
				journal={Invent. Math.},
				volume={151},
				date={2003},
				number={3},
				pages={553--577},
				issn={0020-9910},
				%review={\MR{1961338}},
				%doi={10.1007/s00222-002-0259-2},
			}
			\bib{GW}{article}{
				author={Guan, Pengfei},
				author={Wang, Guofang},
				title={A fully nonlinear conformal flow on locally conformally flat
					manifolds},
				journal={J. Reine Angew. Math.},
				volume={557},
				date={2003},
				pages={219--238},
				issn={0075-4102},
				%review={\MR{1978409}},
				%doi={10.1515/crll.2003.033},
			}
			\bib{GX18}{article}{
				author={Guan, Pengfei},
				author={Xia, Chao},
				title={$L^p$ Christoffel-Minkowski problem: the case $1<p<k+1$},
				journal={Calc. Var. Partial Differential Equations},
				volume={57},
				date={2018},
				number={2},
				pages={Paper No. 69, 23},
				issn={0944-2669},
				%review={\MR{3776359}},
				%doi={10.1007/s00526-018-1341-y},
			}
			\bib{Gu07}{article}{
				author={Gursky, Matthew J.},
				author={Viaclovsky, Jeff A.},
				title={Prescribing symmetric functions of the eigenvalues of the Ricci
					tensor},
				journal={Ann. of Math. (2)},
				volume={166},
				date={2007},
				number={2},
				pages={475--531},
				issn={0003-486X},
				%review={\MR{2373147}},
				%doi={10.4007/annals.2007.166.475},
			}
			\bib{HMS04}{article}{
				author={Hu, Changqing},
				author={Ma, Xi-Nan},
				author={Shen, Chunli},
				title={On the Christoffel-Minkowski problem of Firey's $p$-sum},
				journal={Calc. Var. Partial Differential Equations},
				volume={21},
				date={2004},
				number={2},
				pages={137--155},
				issn={0944-2669},
				%review={\MR{2085300}},
				%doi={10.1007/s00526-003-0250-9},
			}
			\bib{Hu20}{article}{
				author={Hu, Yingxiang},
				author={Li, Haizhong},
				author={Wei, Yong},
				title={Locally constrained curvature flows and geometric inequalities in
					hyperbolic space},
				journal={Math. Ann.},
				volume={382},
				date={2022},
				number={3-4},
				pages={1425--1474},
				issn={0025-5831},
				%review={\MR{4403226}},
				%doi={10.1007/s00208-020-02076-4},
			}

   \bib{Iva19}{article}{
   author={Ivaki, Mohammad N.},
   title={Deforming a hypersurface by principal radii of curvature and
   support function},
   journal={Calc. Var. Partial Differential Equations},
   volume={58},
   date={2019},
   number={1},
   pages={Paper No. 1, 18},
   issn={0944-2669},
 %  review={\MR{3880311}},
 %  doi={10.1007/s00526-018-1462-3},
}

			\bib{JL}{article}{
				author={Jin, Qinian},
				author={Li, YanYan},
				author={Xu, Haoyuan},
				title={Symmetry and asymmetry: the method of moving spheres},
				journal={Adv. Differential Equations},
				volume={13},
				date={2008},
				number={7-8},
				pages={601--640},
				%issn={1079-9389},
				%review={\MR{2479025}},
			}
			\bib{Li03}{article}{
				author={Li, Aobing},
				author={Li, Yanyan},
				title={On some conformally invariant fully nonlinear equations},
				journal={Comm. Pure Appl. Math.},
				volume={56},
				date={2003},
				number={10},
				pages={1416--1464},
				issn={0010-3640},
				%review={\MR{1988895}},
				%doi={10.1002/cpa.10099},
			}
			\bib{Li-Xu}{article}{
				author={Li, Haizhong},
				author={Xu, Botong},
				title={Hyperbolic p-sum and Horospherical p-Brunn-Minkowski theory in hyperbolic space},
				%journal={to appear in Anal. \& PDE},
				eprint={arXiv:2211.06875},
			}
			\bib{Li23}{article}{
				author={Li, Haizhong},
				author={Wan, Yao},
				title={The Christoffel problem in the hyperbolic plane},
				journal={Adv. in Appl. Math.},
				volume={150},
				date={2023},
				pages={Paper No. 102557, 17},
				issn={0196-8858},
				%review={\MR{4601007}},
				%doi={10.1016/j.aam.2023.102557},
			}
			
			\bib{Li89}{article}{
				author={Li, Yan Yan},
				title={Degree theory for second order nonlinear elliptic operators and
					its applications},
				journal={Comm. Partial Differential Equations},
				volume={14},
				date={1989},
				number={11},
				pages={1541--1578},
				issn={0360-5302},
				%review={\MR{1026774}},
				%doi={10.1080/03605308908820666},
			}
			\bib{Li95}{article}{
				author={Li, Yan Yan},
				title={Prescribing scalar curvature on $S^n$ and related problems. I},
				journal={J. Differential Equations},
				volume={120},
				date={1995},
				number={2},
				pages={319--410},
				issn={0022-0396},
				%review={\MR{1347349}},
				%doi={10.1006/jdeq.1995.1115},
			}

   \bib{Lut93}{article}{
   author={Lutwak, Erwin},
   title={The Brunn-Minkowski-Firey theory. I. Mixed volumes and the
   Minkowski problem},
   journal={J. Differential Geom.},
   volume={38},
   date={1993},
   number={1},
   pages={131--150},
   issn={0022-040X},
  % review={\MR{1231704}},
}

\bib{STW}{article}{
				author={Sheng, Wei-Min},
				author={Trudinger, Neil S.},
				author={Wang, Xu-Jia},
				title={The Yamabe problem for higher order curvatures},
				journal={J. Differential Geom.},
				volume={77},
				date={2007},
				number={3},
				pages={515--553},
				issn={0022-040X},
				%review={\MR{2362323}},
			}

			\bib{A91}{article}{
				author={Urbas, John I. E.},
				title={An expansion of convex hypersurfaces},
				journal={J. Differential Geom.},
				volume={33},
				date={1991},
				number={1},
				pages={91--125},
				issn={0022-040X},
				%review={\MR{1085136}},
			}
			\bib{Vi}{article}{
				author={Viaclovsky, Jeff A.},
				title={Conformal geometry, contact geometry, and the calculus of
					variations},
				journal={Duke Math. J.},
				volume={101},
				date={2000},
				number={2},
				pages={283--316},
				issn={0012-7094},
				%review={\MR{1738176}},
				%doi={10.1215/S0012-7094-00-10127-5},
			}

   \bib{Zha24}{article}{
   author={Zhang, Ruijia},
   title={A curvature flow approach to $L^p$ Christoffel-Minkowski problem
   for $1<p< k+1$},
   journal={Results Math.},
   volume={79},
   date={2024},
   number={2},
   pages={Paper No. 53, 22},
   issn={1422-6383},
 %  review={\MR{4687518}},
 %  doi={10.1007/s00025-023-02069-0},
}

		\end{biblist}
	\end{bibdiv}
\end{document}